\newcommand{\mbQ}{{\mathbb Q}}
\newcommand{\cT}{{\mathcal T}}
\newcommand{\cM}{{\mathcal M}}
\newcommand{\cP}{{\mathcal P}}
\newcommand{\cB}{{\mathcal B}}
\newcommand{\cE}{{\mathcal E}}
\newcommand{\cF}{{\mathcal F}}
\newcommand{\ve}{\varepsilon}
\newcommand{\mbR}{{\mathbb R}}
\theoremstyle{plain}
\newtheorem{thm}{Theorem}[section]
\newtheorem{lem}{Lemma}[section]
\theoremstyle{definition}
\newtheorem{defn}{Definition}
[section]
\newtheorem{expl}{Example}
[section]
\theoremstyle{remark}
\newtheorem{remk}{Remark}[section]
\theoremstyle{Corollary}
\theoremstyle{Proposition}
\begin{document}
\begin{center}
{\bf \Large{Self-intersection local times of random fields in stochastic flows}}
\end{center}
\large
\begin{center}
Andrey Dorogovtsev, Alexander Gnedin, Olga Izyumtseva
\end{center}
{\bf Abstract.}
In this article we study transformations of Gaussian field by stochastic flow on the plane. A stochastic flow is a solution to the equation with interaction whose coefficients depend on the occupation measure of the field. We consider nonsmooth Gaussian field, which has self-intersection local times of any multiplicity. In the article we prove the existence of self-intersection local times for the transformed field and study its asymptotics.

\section{Introduction}
\label{Introduction}
In present article we study geometric characteristics of nonsmooth Gaussian random field in the flow of interacting particles on the plane. Our model has two essential features. Firstly, our field is nonsmooth and its geometric characteristics are  "numbers of self-intersections." Secondly, the motion of particles depends on the field itself. For description of particle motion in the random media we use equations with interaction invented by A.A. Dorogovtsev  \cite{16} (p. 54), \cite{9}.
Let us describe these equations more precisely.
\begin{defn}
\label{defn1.1}\cite{16}
The following stochastic differential equation
\begin{equation}
\label{eq1.1}
\begin{cases}
dx(u,t)=a(x(u,t),\mu_t)dt+\int_{\mbR^2}b(x(u,t),\mu_t,p)W(dp,dt)\\	
x(u,0)=u,\ u\in\mbR^2\\
\mu_t=\mu_0\circ x(\cdot,t)^{-1}
\end{cases}
\end{equation}
is said to be an equation with interaction.
\end{defn}
Probability measure $\mu_0$ on $\mbR^2$ is an initial distribution of mass of particles moving in random media. Coefficients depend on spatial variable and measure which describes the mass of particles in space. More precisely coefficient $a$ describes the speed of particle obtained under the action of external forces. Coefficient $b$ and Brownian sheet $W$ on $\mbR^2\times [0;\infty)$ describe small random perturbations acting on our particle. A trajectory of particle with starting point $u\in\mbR^2$ is described by $\{x(u,t),\ t\geq0\}.$ The measure $\mu_t$ is the distribution of the mass of particles at time $t.$ It was proved in \cite{16} (p. 55) that if coefficients of the equation \eqref{eq1.1} satisfy Lipschitz condition with respect to spatial and measure-valued variables,
then there exists the unique solution to \eqref{eq1.1}. Moreover, if coefficients are two times continuously differentiable with respect to spatial variable with bounded derivatives, then for every fixed $t\geq 0$ the solution $x(u,t),\ u\in\mbR^2$ is diffeomorphism, almost surely \cite{16} (p. 66).

Describe how a random field can be included in our stochastic flow. Let $\eta(u),\ u\in D\subset\mbR^2$ be a $\mbR^2$-valued random field.
\begin{defn}
\label{defn1.1}
A probability measure $\mu$ defined on Borel subsets of $\mbR^2$ as follows
$$
\mu(A)=\int_D1_{A}(\eta(u))du,\ A\in\cB(\mbR^2)
$$
is said to be an occupation measure of a field $\eta.$
\end{defn}
It is not difficult to check that the following statement holds.
\begin{lem}
\label{lem1.1}
For any bounded and measurable function $\varphi:\mbR^{2k}\to\mbR$ the following relation holds
$$
\int_{\mbR^{2k}}\varphi(v_1,\ldots,v_k)\mu(dv_1)\ldots\mu(dv_k)=
$$
\begin{equation}
\label{eq1.3}
=\int_D\ldots\int_D\varphi(\eta(u_1),\ldots,\eta(u_k))du_1\ldots du_k.
\end{equation}
\end{lem}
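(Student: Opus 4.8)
The plan is to prove this by the standard measure-theoretic bootstrap: establish \eqref{eq1.3} first for indicator functions, then extend by linearity to simple functions, then to nonnegative measurable functions by monotone convergence, and finally to bounded measurable functions by splitting into positive and negative parts. The whole identity will follow once it is known for $k=1$ together with a product/Fubini argument, so I would actually reduce to that case and then tensorize.

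First I would treat $k=1$. For $A\in\cB(\mbR^2)$ and $\varphi=\1_A$, the left-hand side is $\mu(A)$, which by Definition \ref{defn1.1} equals $\int_D\1_A(\eta(u))\,du$; this is exactly the right-hand side with $\varphi=\1_A$. By linearity this gives \eqref{eq1.3} (for $k=1$) for every simple function $\varphi=\sum_j c_j\1_{A_j}$. For a general nonnegative measurable $\varphi$, pick simple functions $\varphi_n\uparrow\varphi$ pointwise; both sides converge by the monotone convergence theorem (on the right, $u\mapsto\varphi_n(\eta(u))$ is measurable because $\eta$ is a random field, hence measurable in $u$, and the integrals over $D$ are well-defined), yielding the identity for $\varphi\ge0$. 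For bounded measurable $\varphi$ write $\varphi=\varphi^+-\varphi^-$ and subtract; boundedness guarantees all four integrals are finite, so no $\infty-\infty$ issue arises.

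For general $k$ I would first verify \eqref{eq1.3} for product functions $\varphi(v_1,\dots,v_k)=\prod_{i=1}^k\psi_i(v_i)$ with each $\psi_i$ bounded measurable: by Fubini (all integrands bounded, all domains of finite measure after the usual localization, or simply because everything is nonnegative after taking absolute values) the left side factors as $\prod_i\int_{\mbR^2}\psi_i\,d\mu$, the right side factors as $\prod_i\int_D\psi_i(\eta(u_i))\,du_i$, and these agree term by term by the $k=1$ case. The collection of bounded measurable $\varphi$ for which \eqref{eq1.3} holds is a vector space closed under bounded monotone limits and contains the products of indicators $\prod_i\1_{A_i}$, which generate $\cB(\mbR^{2k})$ and form a $\pi$-system; a functional monotone class (Dynkin) argument then extends the identity to all bounded $\cB(\mbR^{2k})$-measurable $\varphi$.

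The only genuine point requiring care — and the step I would flag as the main obstacle, modest as it is — is the applicability of Fubini's theorem and the measurability of $u\mapsto\varphi(\eta(u_1),\dots,\eta(u_k))$ as a function on $D^k$; this needs the joint measurability of $(u_1,\dots,u_k)\mapsto(\eta(u_1),\dots,\eta(u_k))$, which holds because $\eta$ is (assumed to be) a measurable random field, so that $\varphi\circ(\eta,\dots,\eta)$ is a composition of measurable maps. One should also note that if $\mu_0$ (equivalently $D$) is a probability measure, or more generally $D$ has finite Lebesgue measure, then $\mu$ is a finite measure and the interchange of integrals is unconditionally justified for bounded $\varphi$. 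With these observations the proof is routine.
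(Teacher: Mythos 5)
Your argument is correct and complete: the bootstrap from indicators of rectangles through the functional monotone class theorem, together with the $k=1$ case being literally the definition of the occupation measure, is the standard way to establish \eqref{eq1.3}, and your attention to the joint measurability of $(u_1,\ldots,u_k)\mapsto(\eta(u_1),\ldots,\eta(u_k))$ and to the finiteness of $\mu$ (here $D$ has Lebesgue measure one, so $\mu$ is a probability measure) covers the only points where care is needed. The paper itself states the lemma without proof, remarking only that it ``is not difficult to check,'' so there is nothing to compare against; your write-up supplies exactly the routine verification the authors had in mind.
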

Formally applying \eqref{eq1.3} for the function
$$
\varphi(v_1,\ldots,v_k)=\prod^{k-1}_{i=1}\delta_0(v_{i+1}-v_i),\ v_1,\ldots,v_k\in\mbR^2
$$
one can obtain the following relation
$$
\int_{\mbR^{2k}}\prod^{k-1}_{i=1}\delta_0(v_{i+1}-v_i)\mu(dv_1)\ldots\mu(dv_k)=
$$
$$
=\int_D\ldots\int_D\prod^{k-1}_{i=1}\delta_0(\eta(u_{i+1})-\eta(u_i))du_1\ldots du_k.
$$
For nonsmooth random fields $\eta$ formal expression
$$
\int_D\ldots\int_D\prod^{k-1}_{i=1}\delta_0(\eta(u_{i+1})-\eta(u_i))du_1\ldots du_k
$$
is said to be a self-intersection local time. It can be considered as geometric characteristics of field $\eta$ \cite{31}.

Consider the equation with interaction \eqref{eq1.1}. Let $\mu_0$ be an occupation measure of the field $\eta.$ Then one can see that $\mu_t$
is an occupation measure of a new field $x(\eta(u),t),\ u\in D.$
Important that the motion of separate points of a field depends on a shape of a whole field. Coefficients of equation which contain an occupation measure describe this dependence.

There exists a large amount of literature related to the randomly moving curves and manifolds. Here we mention just a few of such papers in order to emphasize
the new features of our model. Let us start from T. Funaki paper \cite{10}.
T. Funaki proposed to consider a string in $\mbR^d$ as a continuous mapping from interval $[0;1]$ to $\mbR^d.$ The evolution of the string is described by the function $x_t(\sigma),\ \sigma\in[0;1],\ t>0$ such that for any $t>0$ $x_t\in C([0;1],\mbR^d).$ T. Funaki introduced the equation of string motion
\begin{equation}
\label{eq2.3}
\frac{\partial x_t(\sigma)}{\partial t}=B(x_t(\sigma))\xi+a(x_t(\sigma))+\frac{k}{2}\frac{\partial^2x_t(\sigma)}{\partial \sigma^2},
\end{equation}
where $\xi$ is $\mbR^d$-valued space-time white noise defined on $[0;1]\times[0;+\infty).$
The first term of \eqref{eq2.3} describes the interaction of the string with the random media, the second term describes the action of external forces and the third term describes the interaction of neighboring parts of the string.
Coefficients $a:\mbR^d\to\mbR^d,\ B:\mbR^d\to\mbR^{d\times d}$
are Lipschitz functions, $k>0$ is a parameter of elasticity of string.
Also it is assumed that $B$ is bounded. The author defines mild solutions to equation \eqref{eq2.3} \cite {15} with respect to different boundary conditions and the initial state $x_0\in C([0;1],\mbR^d).$ Namely, equation \eqref{eq2.3} is substituted by the following integral equation
$$
x_t(\sigma)=\int^1_0p_t(\sigma,\tau)x_0(\tau)d\tau+\int^t_0\int^1_0p(t-s,\sigma,\tau)B(x_s(\tau))W(ds,d\tau)+
$$
$$
+\int^t_0\int^1_0p(t-s,\sigma,\tau)a(x_s(\tau))dsd\tau,
$$
where $p(t,\sigma,\tau)$ is fundamental solution to $\frac{k}{2}\frac{\partial^2}{\partial \sigma^2}$ with the corresponding boundary conditions. Note that solution to \eqref{eq2.3} depends on $k$. In the paper author studies asymptotic property of solution for large $k.$ Asymptotics depends on initial boundary conditions. If both ends of string move freely, then string shrinks to a single point. If one end is fixed, then string contracts to the fixed point. In the case of two fixed ends the solution converges to the interval in $\mbR^d$ defined by fixed points. Moreover the equation of Brownian string in a potential with mentioned boundaries conditions is considered. In this case author defines the stationary measure and investigates its properties. In dimension 2 recurrent properties of solution is discussed.

M.Kardar, G.Parisi and Y.-C.Zhang in \cite{11} proposed a model for a time evolution of a profile of growing interface. An interface profile is described by a height $h(u,t),\ u\in\mbR^d, t>0.$ The Langevin equation for a local growth of a profile is given by
\begin{equation}
\label{eq1.4}
\frac{\partial h}{\partial t}=\nu\Delta h+\frac{\lambda}{2}(\nabla h)^2+\xi,
\end{equation}
where $\xi$ is $\mbR^d$-valued space-time white noise defined on $[0;1]\times[0;+\infty)$ and $\Delta$ is Laplacian. The first term of the right-hand side describes relaxation of an interface by a surface tension $\nu.$ The second term is a lowest-order nonlinear term that can appear in an interface growth equation. The solution to equation \eqref{eq1.4} authors define as mild solution \cite{15} as it was done in T. Funaki paper.

It must be mentioned that proposed evolutions do not take into account the changing of geometric characteristics of evolving curve. Attempt to describe the evolution of smooth curve taking into account the changing of its geometric characteristics was done by Landau and Lifshitz \cite{13}. They considered the space of knots, i.e. the space of all smooth mappings $\theta: S^1\to\mbR^3$ such that for any $y\in S^1:\ \theta^{\prime}(y)\neq0$ and $\theta$ has no double points. Moreover any two maps with the same images are equal. Here
$$
S^1=\{u\in\mbR^2:\ \|u\|=1\}.
$$
The time evolution of knot $\theta(x,t)$ of the curve $\theta(x,0),\ x\in S^1$ is described by filament equation
$$
\frac{\partial\theta}{\partial t}=k(x,t)\frac{\partial\theta}{\partial x}\times \frac{\partial^2\theta}{\partial x^2},
$$
where $k(x,t)$ is the curvature of the curve at the point $x$ and time $t.$
It occurs that filament equation is closely related to Helmholtz approximation to Euler equation \cite{13}. Helmholtz proposed to consider instead of liquid motion the motion of finite number of infinitesimal vortices. If positions of vortices are $r_1,\ldots,r_n,$ then Helmholtz equation has the representation
$$
\frac{dr_i(t)}{dt}=-\frac{1}{\pi}\sum_{i\neq j}k_j\nabla\ln\|r_i(t)-r_j(t)\|.
$$
Note that this equation can be considered as an equation with interaction with the initial distribution
$$
\mu_0=\sum^{n}_{i=1}k_i\delta_{r_i(0)}.
$$
Really, if one put $b=0$ and
$$
a(v,\mu)=-\frac{1}{\pi}\int_{\mbR^2}1_{v\neq x}\nabla\ln\|v-x\|\mu(dx)
$$
in \eqref{eq1.1}, then one can obtain Helmholtz equation.
Suppose that measures
$$
\sum^{n}_{i=1}k_i\delta_{r_i(0)},\ n\geq1
$$
approximate an occupation measure of smooth curve. By considering only local terms in Helmholtz equation, i.e.
$$
\frac{dr_i(t)}{dt}=-\frac{1}{\pi}(k_{i-1}\nabla\ln\|r_i(t)-r_{i-1}(t)\|+k_{i+1}\nabla\ln\|r_i(t)-r_{i+1}(t)\|
$$
and passing to the limit when $n\to+\infty$
one can obtain the filament equation. If one consider not only local terms in Helmholtz equation, then after passing to the limit one can obtain the equation with interaction with the initial measure equals an occupation measure of the smooth curve, coefficients $b=0$ and
$$
a(v,\mu)=-\frac{1}{\pi}\int_{\mbR^2}1_{v\neq x}\nabla\ln\|v-x\|\mu(dx).
$$

In the paper \cite{12} authors made an attempt to trace the changing of geometric characteristics of surface obtained as an image of some manifold under action of an isotropic stochastic flow. In \cite{12} Y. LeJan and M. Cranston considered the motion of smooth $2$-dimensional manifold $M$ in  isotropic measure preserving Brownian flow $\Phi_t,\ t>0$ in $\mbR^3.$ Authors take point $x\in M$ and consider the mean and Gaussian curvature of
$$
M_t=\Phi_t(M)
$$
at the point
$$
x_t=\Phi_t(x).
$$
The stochastic differential equation for this vector is derived. Using Lyapunov function method the recurrence of obtained diffusion is proved.

C.L. Zirbel and E. Cinlar in \cite{14} studied asymptotic behavior of translation
 $$
 C_t=\frac{1}{M_t(\mbR^d)}\int_{\mbR^d}x M_t(dx)
 $$
and dispersion
$$
D_t=\frac{1}{M_t(\mbR^d)}\int_{\mbR^d}(x-C_t)(x-C_t)^T M_t(dx)
$$
for a measure valued process $M_t,\ t\geq 0$ obtained as an image of probability measure on $\mbR^d$  under isotropic Brownian flow. In the paper authors obtained the representation of the mean of $D_t$ and the covariance of $C_t$ in terms of one dimensional diffusion on $\mbR_+$ and described its asymptotic behavior for different dimensions of spatial variable and sign of the largest Lyapunov exponent of the flow.

In present paper we study evolution of Gaussian field in some random media. We suggest completely new approach based on equation with interaction represented by A.A. Dorogovtsev in \cite{16}, \cite{9}. In comparison with the mentioned works our model describe the evolution of Gaussian field taking into account the motion of all points of our Gaussian field. Our aim is to define  self-intersection local times for evolving Gaussian field and describe its asymptotics. According to main aim of the paper it is organized as follows. In Section 2 we introduce a class of planar Gaussian fields. Conditions on covariance function allowed us to prove that self-intersection local for this class of Gaussian fields exists. Self-intersection local time can be considered as the integral from a weight-function with respect to some random measure. The construction of such measure and its properties are also discussed in Section 2.

In Section 3 we construct a self-intersection local time for an image of planar Gaussian field under some deterministic diffeomorphism. Representing a new delta-family generated by given diffeomorphism  we conclude that self-intersection local time exists.

In Section 4 we prove that obtained results in Section 3 can be extended on random diffeomorphisms. This randomness was assigned by considering the diffeomorphism depending on the fixed number of values of  Gaussian field for which a self-intersection local time exists. It allows us to consider a deterministic equation with interaction with initial measure corresponding to the occupation measure of given Gaussian field. Approximating the occupation measure by the sequence of discrete measures we turn up to the previous case and prove the existence of self-intersection local time for the image of Gaussian field under the solution to deterministic equation with interaction.

In Section 5 we consider the Gaussian field presented in Section 2 evolving in a stochastic flow. Due to Section 2 a self-intersection local time for this Gaussian field exists. Mathematically the evolution is described by an image of  Gaussian field under a stochastic flow obtained as a solution to some SDE. Using arguments of Section 4 one can conclude that a self-intersection local time exists for an image of Gaussian field under a solution to SDE. The main aim of Section 5 is to describe a behavior  of self-intersection local time for an image of Gaussian field under a stochastic flow. This question is solved in Section 5 in the following directions. Firstly we consider the evolution of Gaussian field in isotropic Brownian flow which satisfies the following SDE
$$
\begin{cases}
dx(u,t)=\int_{\mbR^2}\varphi(x(u,t)-p)W(dp,dt)\\	
x(u,0)=u,\ u\in\mbR^2.
\end{cases}
$$
Here a function $\varphi\in C^{\infty}_0(\mbR^2)$ describes the interaction of particles $x(u,t)$ with a random media $W.$ The size of support of function $\varphi$ plays the role of radius of interaction with a media. The first question considered in Section 5 is how a self-intersection local time  changes when the radius of interaction decreases to zero. Intuitively one can expect that with the decreasing of radius of interaction our Gaussian field takes a form of a coil, i.e. a self-intersection local time is increasing. In Section 5 we show that the expectation of self-intersection local time for the image of Gaussian field in $e^{\frac{ck(k-1)t}{2\ve^2}}$ times greater than the expectation of self-intersection local time for the initial Gaussian field. Here $\ve$ is the radius of interaction with a media. For precise definition of parameters see Section 5. Let us fix the radius of interaction with a media and ask how a self-intersection local time changes for large time intervals. Here we establish that the expectation of self-intersection local time for the image of Gaussian field in $e^{\frac{ck(k-1)t}{2}}$ times greater when the expectation of self-intersection local time for the initial Gaussian field.

In the second case stochastic flow obtained as solution to the following equation with interaction
$$
\begin{cases}
dx(u,t)=a(x(u,t)-\int_{\mbR^2}v\mu_t(dv))dt+\int_{\mbR^2}\varphi(x(u,t)-p)W(dp,dt)\\	 x(u,0)=u,\ u\in\mbR^2\\
\mu_t=\mu_0\circ x(\cdot,t)^{-1}.
\end{cases}
$$
The first term of equation describes the motion of particles with respect to joint center of mass $\int_{\mbR^2}v\mu_t(dv).$ Our particles run away from a center of mass. Without stochastic term such behavior of particles implies that the expectation of self-intersection local time for the image of Gaussian field is decreasing. Including a stochastic term in the equation we obtain that the expectation of self-intersection local time for the image of Gaussian field in $e^{(kc-2a)(k-1)t}$ times greater than the expectation of self-intersection local time for the initial Gaussian field, i.e. it exponentially grows to infinity. The apogee of Section 5 is an introduction of right renormalization which for every trajectory of our Gaussian field allows us to conclude the existence of finite limit of self-intersection local time for the image of this Gaussian field under a solution of mentioned equation with interaction when t grows to infinity.

\section{Self-intersection local time for one class of Gaussian fields}

In present section we prove the existence of self-intersection local time for a certain class of planar Gaussian fields. Let us start from a general definition of self-intersection local time. Let $\varsigma(u),\ u\in D\subset\mbR^2$ be a planar random field and $\rho:\mbR^2\to\mbR$ be some measurable function. In the paper we call $\rho$ weight-function or weight. The function $\rho$ can describe the properties of media in which random field moves. Consider a  family of functions which approximates delta-function
\begin{equation}
\label{eq2.100}
f_{\ve}(z)=\frac{1}{2\pi\ve}e^{-\frac{\|z\|^2}{2\ve}},\ z\in\mbR^2.
\end{equation}
Further in the paper we will use the following notation. For any $z_1,\ldots,z_n$
$$
d\vec{z}=dz_1\ldots dz_n.
$$
\begin{defn}
\label{defn2.1} A random variable
$$
\cT^\varsigma_k(\rho)=\int_D\ldots\int_D\prod^{k-1}_{i-1}\delta_0(\varsigma(u_{i+1})-\varsigma(u_{i}))d\vec{u}=
$$
$$
=L_2-\lim_{\ve\to0}\int_{D}\ldots\int_{D}\prod^{k-1}_{i-1}f_{\ve}(\varsigma(u_{i+1})-\varsigma(u_{i}))d\vec{u}
$$
is said to be self-intersection local time for a field $\varsigma$ whenever the limit exists.
\end{defn}
Note that for planar random processes self-intersection local time can not be defined as the limit of approximations. To obtain the existence of finite limit one must construct renormalizations. For planar Wiener process such renormalizations were constructed in works \cite{1}--\cite{3}. In \cite{3} E.B. Dynkin constructed renormalization for bounded measurable $\mbR$- valued weights. In \cite{27} we expanded E.B. Dynlin result for Hilbert-valued weights.  For planar Gaussian integrators we constructed renormalized self-intersection local time in \cite{25}, \cite{26}.

Let
$$
\eta(u)=(\eta_1(u),\eta_2(u)),\ u\in D
$$
be a Gaussian random field, where $D=[0;1]\times[1;2].$ Coordinates $\eta_i:D\to\mbR,\ i=\overline{1,2}$ are
independent equidistributed  centered Gaussian random fields
with the covariance function
$$
E\eta_i(y_1, t_1)\eta_i(y_2, t_2)=e^{-|y_2-y_1|^{\alpha}}t_1\wedge t_2,\ i=\overline{1,2},\ \alpha\in (0;2].
$$
Note that for $t_1<t_2$
$$
E(\eta_1(y_2,t_2)-\eta_1(y_1,t_1))^2\leq
$$
$$
\leq 8(|y_2-y_1|^{\alpha}+(t_2-t_1)).
$$
Since $\eta_1$ is Gaussian, then due to Kolmogorov continuity theorem one can conclude that the field $\eta(y,t),\ y\in[0;1],\ t\in[1;2]$ has a continuous modification with respect to both variables $y$ and $t$ . Moreover
for every fixed $y\in[0;1]$ the processes $\eta_i(y, t),\  t\in[1;2],\ i=1,2$ are independent Brownian motions.
Therefore our random field represents the set of planar Wiener processes indexed by $y\in[0;1].$ A correlation between these planar Wiener processes implies the existence of self-intersection local time for the field $\eta$ despite of the fact that for one planar Wiener process a self-intersection local time does not exist.

\begin{thm}
\label{thm2.1} For a continuous bounded  weight-function $\rho:\mbR^2\to\mbR$
there exists
$$
\cT^{\eta}_k(\rho)=L_2-\lim_{\ve\to0}\int_D\ldots\int_D\rho (\eta(u_1))\prod^{k-1}_{i=1}f_{\ve}(\eta(u_{i+1})-\eta(u_i))d\vec{u}.
$$
\end{thm}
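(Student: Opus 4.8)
The plan is to show that the approximating family
\[
I_\ve:=\int_D\!\cdots\!\int_D \rho(\eta(u_1))\prod_{i=1}^{k-1}f_\ve(\eta(u_{i+1})-\eta(u_i))\,d\vec u
\]
is Cauchy in $L_2$ as $\ve\to0$; completeness of $L_2$ then yields the limit. Since $\|I_\ve-I_\delta\|_2^2=EI_\ve^2-2E[I_\ve I_\delta]+EI_\delta^2$, it is enough to prove that $E[I_\ve I_\delta]$ has a finite limit as $(\ve,\delta)\to(0,0)$ jointly. To compute these second moments I would use the Fourier representation $f_\ve(z)=(2\pi)^{-2}\int_{\mbR^2}e^{i\lg\lambda,z\rg-\frac\ve2\|\lambda\|^2}\,d\lambda$ to turn the products of $f_\ve$'s into integrals over frequency vectors, take the (Gaussian) expectation, and apply Fubini; then $E[I_\ve I_\delta]$ becomes an integral over $D^{2k}$ (variables $u_1,\dots,u_k$ and $u_1',\dots,u_k'$) of the characteristic function of a centered Gaussian vector, damped by $\exp(-\tfrac\ve2\sum\|\lambda_i\|^2-\tfrac\delta2\sum\|\lambda_j'\|^2)$, against a factor carrying $\rho$.

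Because $f_\ve$ splits over the two coordinates and $\eta_1,\eta_2$ are i.i.d., the computation reduces to the first-coordinate increments. Let $\wt\Sigma_0(u,u')$ be the $(2k-2)\times(2k-2)$ covariance matrix of the $2(k-1)$ scalar increments $\eta_1(u_{i+1})-\eta_1(u_i)$ and $\eta_1(u_{j+1}')-\eta_1(u_j')$, and let $\Lambda_{\ve,\delta}=\mathrm{diag}(\ve,\dots,\ve,\delta,\dots,\delta)$ ($k-1$ entries of each). Carrying out the Gaussian frequency integrals gives
\[
E[I_\ve I_\delta]=\frac{1}{(2\pi)^{2(k-1)}}\int_{D^{2k}}\frac{Q_{\ve,\delta}(u,u')}{\det\bigl(\wt\Sigma_0(u,u')+\Lambda_{\ve,\delta}\bigr)}\,d\vec u\,d\vec u',
\]
where $Q_{\ve,\delta}$ is the $\rho$-contribution: writing $\eta(u_1)$ and $\eta(u_1')$ as their regressions on the $2(k-1)$ increments plus independent Gaussian remainders $Z,Z'$, one has $|Q_{\ve,\delta}|\le\|\rho\|_\infty^2$ and, using that $\rho$ is bounded and continuous, $Q_{\ve,\delta}(u,u')\to E[\rho(Z)\rho(Z')]$ pointwise as $(\ve,\delta)\to(0,0)$, with $(Z,Z')$ the conditional version of $(\eta(u_1),\eta(u_1'))$ given that all those increments vanish. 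Since $\wt\Sigma_0+\Lambda_{\ve,\delta}\succeq\wt\Sigma_0\succeq0$ forces $\det(\wt\Sigma_0+\Lambda_{\ve,\delta})\ge\det\wt\Sigma_0$, the integrand is bounded, uniformly in $\ve,\delta$, by $\|\rho\|_\infty^2(2\pi)^{-2(k-1)}(\det\wt\Sigma_0)^{-1}$, and off $\{\det\wt\Sigma_0=0\}$ it converges to $E[\rho(Z)\rho(Z')](2\pi)^{-2(k-1)}(\det\wt\Sigma_0)^{-1}$. The first moments are handled identically with the $(k-1)\times(k-1)$ covariance $\Sigma_0(u_1,\dots,u_k)$ of $(\eta_1(u_{i+1})-\eta_1(u_i))_{i=1}^{k-1}$ in place of $\wt\Sigma_0$, which in passing realizes $\cT^\eta_k(\rho)$ as an integral of $\rho$ against a random measure, in line with the rest of this section.

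The heart of the proof — and the only genuinely hard point — is then the integrability estimate $\int_{D^{2k}}(\det\wt\Sigma_0(u,u'))^{-1}\,d\vec u\,d\vec u'<\infty$, together with its first-moment analogue for $\Sigma_0$. This is precisely where the particular covariance $E\eta_1(y_1,t_1)\eta_1(y_2,t_2)=e^{-|y_2-y_1|^\alpha}(t_1\wedge t_2)$ and the restriction $\alpha\in(0;2]$ are used. I would bound $\det\wt\Sigma_0$ from below by a product of conditional variances of successive increments and estimate these via $1-e^{-|y-y'|^\alpha}\asymp|y-y'|^\alpha$ for small $|y-y'|$ together with the Brownian scaling in the time variable; $\wt\Sigma_0$ is singular only on the thin sets where parameters collide (some $u_i=u_{i+1}$, forcing an increment to vanish, or $(u_1,\dots,u_k)=(u_1',\dots,u_k')$, which makes $\eta_1(u_{j+1}')-\eta_1(u_j')$ collapse onto $\eta_1(u_{j+1})-\eta_1(u_j)$), and the point is that the Lebesgue measure of near-coincident configurations vanishes fast enough to absorb the near-degeneracy of the corresponding Gram determinant. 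Here the second coordinate of $\eta$ is essential — it is what demands the stronger bound with $(\det\wt\Sigma_0)^{-1}$ rather than $(\det\wt\Sigma_0)^{-1/2}$ — and it is exactly the correlation between the planar Wiener processes $\eta(y,\cdot)$, $y\in[0;1]$, that keeps this Gram determinant from vanishing on a set of positive measure, which is why $\cT^\eta_k(\rho)$ exists although the self-intersection local time of a single planar Wiener process does not. Granting this estimate, dominated convergence applied to the display above produces the common joint limit of $E[I_\ve I_\delta]$, hence $\|I_\ve-I_\delta\|_2\to0$ and the theorem follows; carrying out the integrability bound with constants tracked uniformly in $\ve,\delta$ is the technical core of the argument.
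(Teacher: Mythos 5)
Your overall architecture coincides with the paper's: compute $E[I_{\ve}I_{\delta}]$, reduce (after integrating out the two coordinates and the $\rho$-variables by regression) to a dominating function $C(\det\wt\Sigma_0)^{-1}$ where $\wt\Sigma_0$ is the covariance (Gram) matrix of the $2(k-1)$ first-coordinate increments, bound that determinant below by a telescoping product of conditional variances, and close with dominated convergence. All of that matches the paper, including the observation that the monotonicity $\det(\wt\Sigma_0+\Lambda_{\ve,\delta})\ge\det\wt\Sigma_0$ supplies a dominating function uniform in $\ve,\delta$.

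The gap is in the one step you yourself flag as the heart of the matter: the lower bound on the conditional variances $\mathrm{Var}\bigl(\eta_1(u_j)\mid\eta_1(u_1),\dots,\eta_1(u_{j-1})\bigr)$ when the conditioning set contains \emph{many} points. The tool you name, $1-e^{-|y-y'|^{\alpha}}\asymp|y-y'|^{\alpha}$, is a two-point statement; conditioning on additional variables only \emph{decreases} the variance, so the pairwise quantity $\min_{j}\mathrm{Var}(X\mid Y_j)$ is an \emph{upper} bound for the multi-point conditional variance, not the lower bound you need. What is required is a local-nondeterminism-type estimate, and the paper obtains it by a specific mechanism you have not supplied: enlarge the conditioning to the full continuum $\{\eta_1(p,s):|r-p|\ge\delta_1,\ |t-s|\ge\delta_2\}$ (which can only shrink the variance further, so any lower bound for it is valid for every finite sub-collection), compare $\eta_1$ with the non-Gaussian product field $\xi(y,t)=\zeta(y)w(t)$ having the same covariance (for which the continuum conditioning factorizes into a spatial and a temporal problem), and invoke the Cuzick--DuPreez spectral estimate for the stationary process $\zeta$ with spectral density $\sim c_\alpha|\lambda|^{-(\alpha+1)}$. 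The resulting bound is
$$
\mathrm{Var}\bigl(\eta_1(r,t)\mid\eta_1(p,s),\ |r-p|\ge\delta_1,\ |t-s|\ge\delta_2\bigr)\ \ge\ K\,t\,\delta_1^{\alpha+1}+\tfrac12\delta_2 ,
$$
with exponent $\alpha+1$, not $\alpha$: conditioning on everything outside a $\delta_1$-window genuinely leaves less variance than the two-point heuristic suggests, and the integrability of the reciprocal product over $D^{2k}$ then has to be checked against this weaker (but correct) power, which the paper does by the explicit logarithmic integral at the end. Without this mechanism your sketch asserts the conclusion of the hard lemma rather than proving it, and as stated (with exponent $\alpha$) the claimed pointwise lower bound would in general be false.
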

\begin{proof} Denote by
$$
I_{\ve}=\int_D\ldots\int_D\rho (\eta(u_1))\prod^{k-1}_{i=1}f_{\ve}(\eta(u_{i+1})-\eta(u_i))d\vec{u}.
$$
To prove the statement let us check that there exists finite limit
$$
\lim_{\ve_1,\ve_2\to0}EI_{\ve_1}I_{\ve_2}.
$$
For Gaussian random variables $\xi_1,\ldots,\xi_n$
denote by $G(\xi_1,\ldots,\xi_n)$ Gram determinant constructed by random variables $\xi_1,\ldots,\xi_n$ considered as elements of Hilbert space of square integrable random variables.
One can see that
$$
EI_{\ve_1}I_{\ve_2}=E\int_D\ldots\int_D \rho (\eta(u_1))\rho (\eta(v_1))\prod^{k-1}_{i=1}f_{\ve_1}(\eta(u_{i+1})-\eta(u_i))\cdot
$$
$$
\cdot\prod^{k-1}_{i=1}f_{\ve_2}(\eta(v_{i+1})-\eta(v_i))d\vec{u}d\vec{v}=
$$
$$
=\int_D\ldots\int_D\int_{\mbR^{2k}}\rho(z_1)\rho(z_{k+1})\prod^{k}_{i=2}f_{\ve_1}(z_i)\prod^{2k}_{j=k+2}f_{\ve_2}(z_j)\cdot
$$
$$
\cdot \frac{1}{(2\pi)^k G(\eta_1(u_1),\Delta\eta_1(u_1),\ldots,\eta_1(v_1),\ldots,\Delta\eta_1(v_{k-1}))}e^{-\frac{(A^{-1}\vec{z},\vec{z})}{2}}d\vec{z}d\vec{u}d\vec{v},
$$
where $A$ is Gramian matrix constructed by random variables
$$
\eta_1(u_1),\ldots,\Delta\eta_1(u_{k-1}),\ldots,\Delta\eta_1(v_{k-1}),\eta_2(u_1),\ldots,\eta_2(v_{k-1}).
$$
and
$$
\Delta\eta_1(u_i)=\eta_1(u_{i+1})-\eta_1(u_i).
$$
Denote by
$$
p(\vec{z})=\frac{1}{(2\pi)^k G(\eta_1(u_1),\Delta\eta_1(u_1),\ldots,\eta_1(v_1),\ldots,\Delta\eta_1(v_{k-1}))}e^{-\frac{(A^{-1}\vec{z},\vec{z})}{2}}.
$$
Note that
$$
\int_{\mbR^{2k}}\rho(z_1)\rho(z_{k+1})\prod^{k}_{i=2}f_{\ve_1}(z_i)\prod^{2k}_{j=k+2}f_{\ve_2}(z_j)p(\vec{z})d\vec{z}\to
$$
$$
\int_{\mbR^4}\rho(z_1)\rho(z_{k+1})p(z_1,0,\ldots,0,z_{k+1},0,\ldots,0)dz_1dz_{k+1}
$$
as $\ve_1,\ve_2\to0.$ Moreover using boundedness of weight $\rho$ one can conclude that
$$
\Big|\int_{\mbR^{2k}}\rho(z_1)\rho(z_{k+1})\prod^{k}_{i=2}f_{\ve_1}(z_i)\prod^{2k}_{j=k+2}f_{\ve_2}(z_j)p(\vec{z})d\vec{z}\Big|\leq
$$
$$
\leq c_1\Big|\int_{\mbR^{2k}}\prod^{k}_{i=2}f_{\ve_1}(z_i)\prod^{2k}_{j=k+2}f_{\ve_2}(z_j)p(\vec{z})d\vec{z}\Big|,\ c_1>0.
$$
Integrating the function $p$ with respect to variables $z_1$ and $z_{k+1}$ one can obtain the density of Gaussian vector
$$
(\Delta\eta_1(u_1),\ldots,\Delta\eta_1(u_{k-1}),\Delta\eta_1(v_1),\ldots,\Delta\eta_1(v_{k-1}),\Delta\eta_2(u_1),\ldots,\Delta\eta_2(v_{k-1}))
$$
which has the representation
$$
\tilde{p}(\vec{z})=\frac{1}{(2\pi)^{\frac{2k-2}{2}}G(\Delta\eta_1(u_1),\ldots,\Delta\eta_1(v_{k-1}))}e^{-\frac{(\tilde{A}^{-1}\vec{z},\vec{z})}{2}},
$$
where $\tilde{A}$ is Gramian matrix constructed by random variables
$$
\Delta\eta_1(u_1),\ldots,\Delta\eta_1(v_{k-1}),\Delta\eta_2(u_1),\ldots,\Delta\eta_2(v_{k-1})).
$$
One can check that the following relation holds
$$
\Big|\int_{\mbR^{2k-2}}\prod^{k}_{i=2}f_{\ve_1}(z_i)\prod^{2k}_{j=k+2}f_{\ve_2}(z_j)\tilde{p}(\vec{z})d\vec{z}\Big|\leq
$$
$$
\leq \frac{c_2}{G(\Delta\eta_1(u_1),\ldots,\Delta\eta_1(v_{k-1}))},\ c_2>0.
$$
To apply Lebesgue's dominated convergence theorem let us prove that
$$
\int_{D}\ldots\int_D\frac{1}{G(\Delta\eta_1(u_1),\ldots,\Delta\eta_1(v_{k-1}))}d\vec{u}d\vec{v}<+\infty.
$$
Suppose that
$$
u_i=(y_i,t_i),\ v_i=(z_i,s_i),\ i=\overline{1,k}.
$$
Then
$$
G(\Delta\eta_1(u_1),\ldots,\Delta\eta_1(u_{k-1}),\Delta\eta_1(v_{1}),\ldots,\Delta\eta_1(v_{k-1}))=
$$
\begin{equation}
\label{eq2.1}
=G(\Delta \eta_1(y_{1},t_1),\ldots,\Delta \eta_1(y_{k-1},t_{k-1}), \Delta\eta_1(z_1,s_1),\ldots,\Delta\eta_1(z_{k-1},s_{k-1})).
\end{equation}
To obtain lower estimate for \eqref{eq2.1} we need the following lemma.
\begin{lem}
\label{lem2.2}
Let $e_1,\ldots,e_k$ be Gaussian random variables. Then
$$
G(e_2-e_1,e_3-e_2,\ldots,e_k-e_{k-1})\geq
$$
$$
\geq Var(e_2|e_1) Var(e_3|e_1,e_2)\ldots Var(e_k|e_1,e_2,\ldots,e_{k-1}).
$$
\end{lem}
\begin{proof}
Note that
$$
G(e_2-e_1,e_3-e_2,\ldots,e_k-e_{k-1})=G(e_2-e_1,e_3-e_1,\ldots,e_k-e_{1})=
$$
$$
=E(e_2-e_1)^2 Var(e_3-e_1|e_2-e_1)\ldots Var(e_k-e_1|e_2-e_1,\ldots,e_{k-1}-e_{1}).
$$
Since for $K_1\subseteq K_2\subset L_2(\Omega,\cF,P)$ and Gaussian random variable $g$
$$
\inf_{e\in K_1}E(g-e)^2\geq \inf_{e\in K_2}E(g-e)^2,
$$
then
$$
E(e_2-e_1)^2 Var(e_3-e_1|e_2-e_1)\ldots Var(e_k-e_1|e_2-e_1,\ldots,e_{k-1}-e_{1})\geq
$$
$$
\geq Var(e_2|e_1) Var(e_3|e_1,e_2)\ldots Var(e_k|e_1,e_2,\ldots,e_{k-1})
$$
which completes the proof of lemma.
\end{proof}
It follows from Lemma \ref{lem2.2} that
$$
G(\Delta \eta_1(y_{1},t_1),\ldots,\Delta \eta_1(y_{k-1},t_{k-1}), \Delta\eta_1(z_1,s_1),\ldots,\Delta\eta_1(z_{k-1},s_{k-1}))\geq
$$
$$
\geq\sigma^2(y_1,t_1,u_2,t_2)\ldots\sigma^2(y_1,t_1,\ldots,z_k,s_k),
$$
where
$$
\sigma^2(y_1,t_1,\ldots,z_i,s_i)=Var(\eta_1(v_i,s_i)|\eta_1(u_1,t_1),\eta_1(u_2,t_2),\ldots,\eta_1(v_{i-1},t_{i-1})).
$$
In the beginning let us estimate from below
$$
Var\Big(\eta_1(r, t)|\eta_1(p, s),  |p-r|\geq\delta_1, |t-s|\geq\delta_2\Big)
$$
for $0<\delta_1, \delta_2<\frac{1}{2}.$
Since $\eta_1$ is Gaussian random field, then
$$
Var\Big(\eta_1(r, t)|\eta_1(p, s),  |r-p|\geq\delta_1, |t-s|\geq\delta_2\Big)=
$$
$$
=E(\eta_1(r, t)-E(\eta_1(r, t)|\eta_1(p, s), |r-p|\geq\delta_1, |t-s|\geq\delta_2))^2.
$$
Consider non Gaussian random field $\xi(u, s)=\zeta(u)w(s),\ u\in\mbR,\  s\in[0; 1],$ where $\zeta$ is a centered stationary process with the covariance function $e^{-|u|^{\alpha}},\ \alpha\in(0;2]$ and $w$ is standard Wiener process. Suppose that $\zeta$ and $w$ are independent. Note, that $\xi$ and $\eta_1$ have the same covariance function. Really,
$$
E\xi(r_1, t_1)\xi(r_2, t_2)=e^{-|r_1-r_2|^{\alpha}}t_1\wedge t_2.
$$
For Gaussian random field $\eta_1$ a subspace of random variables that is measurable with respect to  $\sigma$-field generated by $\{\eta_1(p, s), |r-p|\geq\delta_1, |t-s|\geq\delta_2\}$ coincides with a closure in $L_2(\Omega,\cF,P)$ of linear span generated by $\{\eta_1(p, s), |r-p|\geq\delta_1, |t-s|\geq\delta_2\}.$ For non Gaussian field $\xi$ a subspace of random variables that is measurable with respect to $\sigma$-field generated by $\{\xi(p, s), |r-p|\geq\delta_1, |t-s|\geq\delta_2\}$ contains a closure in $L_2(\Omega,\cF,P)$ of a linear span generated by $\{\xi(p, s), |r-p|\geq\delta_1, |t-s|\geq\delta_2\}.$ Consequently
$$
E(\eta_1(r, t)-E(\eta_1(r, t)|\eta_1(p, s), |r-p|\geq\delta_1, |t-s|\geq\delta_2))^2\geq
$$
$$
\geq
E(\xi(r, t)-E(\xi(r, t)|\xi(p, s), |r-p|\geq\delta_1, |t-s|\geq\delta_2))^2\geq
$$
$$
\geq
E(\xi(r, t)-E(\xi(r, t)|\zeta(p), |r-p|\geq\delta_1, w(s), |t-s|\geq\delta_2))^2=
$$
$$
=E(\zeta(r)w(t)-\hat{\zeta}(r)\hat{w}(t))^2.
$$
Here
$$
\hat{\zeta}(r)=E(\zeta(r)|\zeta(p), |r-p|\geq\delta_1),
$$
$$
\hat{w}(t)=E(w(t)|w(s), |t-s|\geq\delta_2).
$$
Note that
$$
E(\zeta(r)w(t)-\hat{\zeta}(r)\hat{w}(t))^2=
$$
$$
=E((\zeta(r)-\hat{\zeta}(r)+\hat{\zeta}(r))w(t)-\hat{\zeta}(r)(\hat{w}(t)-w(t)+w(t)))^2=
$$
$$
=E((\zeta(r)-\hat{\zeta}(r))w(t)+(w(t)-\hat{w}(t))\hat{\zeta}(r))^2=
$$
$$
=tE(\zeta(r)-\hat{\zeta}(r))^2+2E(\zeta(r)-\hat{\zeta}(r))w(t)(w(t)-\hat{w}(t))\hat{\zeta}(r)+
$$
\begin{equation}
\label{eq2.9}
+E\hat{\zeta}(r)^2E(w(t)-\hat{w}(t))^2.
\end{equation}
Since
$$
E(\zeta(r)-\hat{\zeta}(r))\hat{\zeta}(r)=0,
$$
then \eqref{eq2.9} equals
$$
tE(\zeta(r)-\hat{\zeta}(r))^2+E\hat{\zeta}(r)^2E(w(t)-\hat{w}(t))^2.
$$
Let us check that there exists a constant $K>0$ such that
\begin{equation}
\label{eq2.12}
Var(\zeta(r)|\zeta(p),\ |r-p|\geq \delta_1)=E(\zeta(r)-\hat{\zeta}(r))^2\geq K\delta^{\alpha+1}_1.
\end{equation}
To prove \eqref{eq2.12} we need the following Lemma.
\begin{lem}
\label{lem2.1}\cite{7} Let $X(t)$ be a stationary process with spectral distribution function $F(\lambda).$ Assume that the absolutely continuous part $F(\lambda)$ is such that there exists a function $\phi$ for which
$$
\frac{dF\Big(\frac{\lambda}{t}\Big)}{\phi(t)}\geq h(\lambda)d\lambda
$$
for all $t<t_0,$ where $h$ is deacreasing on $[0;\infty)$ and
$$
\int^{\infty}_{0}\frac{\ln h(\lambda)}{1+\lambda^2}d\lambda>-\infty.
$$
Then there exists a positive constant $K$ such that
$$
Var(X(0)|X(s),\ |s|\geq t)\geq K\phi(t).
$$
\end{lem}
Since $e^{-|u|^{\alpha}},\ \alpha\in(0;2]$ is characteristic function of symmetric stable distribution with parameter $\alpha,$ then stationary process
$\zeta$ has spectral density $p$ such that
\begin{equation}
\label{eq2.13}
p(y)=p(-y)\sim \frac{c_\alpha}{y^{\alpha+1}},\ y\to+\infty.
\end{equation}
Positivity and continuity of function $p$ on $\mbR$ and tails asymptotics \eqref{eq2.13} imply that there exists a constants $c>0$ such that for any $y\in\mbR$
$$
p(y)>\frac{c}{1+y^{\alpha+1}}.
$$
Then
\begin{equation}
\label{eq2.14}
p\Big(\frac{\lambda}{t}\Big)>c\ \frac{t^{\alpha+1}}{t^{\alpha+1}+\lambda^{\alpha+1}}.
\end{equation}
It follows from \eqref{eq2.14} that for all $t<1$
$$
p\Big(\frac{\lambda}{t}\Big)>c\ \frac{t^{\alpha+1}}{1+\lambda^{\alpha+1}}.
$$
Consequently one can apply Lemma \ref{lem2.1} for the stationary process $\zeta$ with
$$
\phi(t)=t^{\alpha+1},
$$
$$
h(\lambda)=\frac{1}{1+\lambda^{\alpha+1}}.
$$
Note that $h$ is decreasing function on $[0;\infty)$. Moreover one can check that
$$
\int^{\infty}_0\frac{\ln\frac{1}{1+\lambda^{\alpha+1}}}{1+\lambda^2}d\lambda>-\infty.
$$
It follows from Lemma \ref{lem2.1} that there exists $K>0$ such that
$$
Var (\zeta(r)|\zeta(p),\ |r-p|\geq \delta_1)\geq
$$
$$
\geq K \phi(r-p)\geq K\delta^{\alpha+1}_1.
$$
Also one can see that
$$
E\hat{\zeta}(r)^2\leq E\zeta(r)^2=1,
$$
$$
E(w(t)/w(s), |t-s|\geq\delta_2)=\frac{1}{2}(w(t-\delta_2)+w(t+\delta_2))
$$
and
$$
E(w(t)-\hat{w}(t))=\frac{1}{2}\delta_2.
$$
Consequently the conditional variance can be estimated as follows
\begin{equation}
\label{eq2.10}
V(\eta_1(r, t)|\eta_1(p, s), |p-r|\geq\delta_1, |t-s|\geq\delta_2)\geq K\ t\ \delta^{\alpha+1}_1+\frac{1}{2}\delta_2,\ K>0.
\end{equation}
It follows from \eqref{eq2.10}
that
$$
Var\Big(\eta_1(z_i,s_i)|\eta_1(y_1,t_1),\eta_1(y_2,t_2),\ldots,\eta_1(z_{i-1},s_{i-1})\Big)\geq
$$
$$
\geq Var\Big(\eta_1(z_i,s_i)|\eta_1(p,\tau),
$$
$$
|z_i-p|\geq\min_{q\in\{y_1,\ldots,z_{i-1}\}}|z_i-q|,\ |s_i-\tau|\geq\min_{r\in\{t_1,\ldots,s_{i-1}\}}|s_i-r|\Big)\geq
$$
\begin{equation}
\label{eq2.1000}
\geq K s_i(\min_{q\in\{y_1,\ldots,z_{i-1}\}}|z_i-q|)^{\alpha+1}+\frac{1}{2}\min_{r\in\{t_1,\ldots,s_{i-1}\}}|s_i-r|.
\end{equation}
Note that
$$
\frac{1}{K s_i(\min_{q\in\{y_1,\ldots,z_{i-1}\}}|z_i-q|)^{\alpha+1}+\frac{1}{2}\min_{r\in\{t_1,\ldots,s_{i-1}\}}|s_i-r|}\leq
$$
$$
\leq \sum_{q\in\{y_1,\ldots,z_{i-1}\},\ r\in\{t_1,\ldots,s_{i-1}\}}\frac{1}{K s_i|z_i-q|^{\alpha+1}+\frac{1}{2}|s_i-r|}.
$$
Therefore
$$
\int_D\ldots\int_D
\frac{1}{G(\Delta \eta_1(y_{1},t_1),\ldots,\Delta \eta_1(z_{k-1},s_{k-1}) )}d\vec{y}d\vec{t}d\vec{z}d\vec{s}\leq
$$
$$
\leq\sum_{q_1\in\{y_1,y_2\},\ r_1\in\{t_1,t_2\}}\ldots\sum_{q_{2k-1}\in\{y_1,\ldots,z_{k-1}\},\ r_{2k-1}\in\{t_1,\ldots,s_{k-1}\}}
$$
$$
\int_D\ldots\int_D\frac{1}{K\ t_2|y_2-y_1|^{\alpha+1}+\frac{1}{2}|t_2-t_1|}\cdot\frac{1}{K\ t_3|y_3-q_1|^{\alpha+1}+\frac{1}{2}|t_3-r_1|}\ldots
$$
$$
\ldots\frac{1}{K\ s_k|z_k-q_{2k-1}|^{\alpha+1}+\frac{1}{2}|s_k-r_{2k-1}|}d\vec{y}d\vec{t}d\vec{z}d\vec{s}.
$$
To finish the proof it suffices to check that there exists a constant $c_3>0$ such that
$$
\sup_{q_{2k-1}\in[0;1]}\sup_{r_{k-1}\in[1;2]}\Big|\int^1_0\int^{2}_{r_{2k-1}}\frac{1}{2K|z_{k}-q_{2k-1}|^{\alpha+1}+s_{k}-r_{2k-1}}ds_kdz_{k}\Big|<c_3.
$$
Really,
$$
\Big|\int^1_0\int^{2}_{r_{2k-1}}\frac{1}{2K|z_{k}-q_{2k-1}|^{\alpha+1}+s_{k}-r_{2k-1}}ds_kdz_{k}\Big|=
$$
$$
=\Big|\int^1_0\ln 2K(|z_{k}-q_{2k-1}|^{\alpha+1}+2-r_{2k-1})dz_k\Big|+
$$
$$
+\Big|\int^1_{0}\ln 2K |z_{k}-q_{2k-1}|^{\alpha+1} dz_k\Big|< c_3,\ c_3>0.
$$
\end{proof}
\begin{remk}
\label{remk2.1}By truncation arguments $\cT^{\eta}_k(\rho)$ can be defined for continuous unbounded weights on $\mbR^2.$
\end{remk}
Obtained value $\cT^{\eta}_k(\rho)$ can be considered as an integral from the function $\rho$ with respect to random measure $\nu_k$
which can called by self-intersection measure of order $k.$ The existence and construction of such measure is described in the following Lemma.
\begin{lem}
\label{lem2.6} For an arbitrary $k\geq2$ there exists a finite measure $\nu_k$ on $\mbR^2$ such that for any $A\in\cB(\mbR^2)$
$$
\nu_k(A)=\int_D\ldots\int_D1_{A}(\eta(u_1))\prod^{k-1}_{j=1}\delta_0(\eta(u_{i+1})-\eta(u_{i}))d\vec{u}.
$$
\end{lem}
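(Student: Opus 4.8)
The plan is to obtain $\nu_k$ as a weak limit of the regularising measures. For $\ve>0$ set
$$
\nu^\ve_k(A)=\int_D\ldots\int_D 1_A(\eta(u_1))\prod^{k-1}_{i=1}f_\ve(\eta(u_{i+1})-\eta(u_i))\,d\vec{u},\qquad A\in\cB(\mbR^2),
$$
a random finite measure with $\int_{\mbR^2}\rho\,d\nu^\ve_k=I_\ve$ in the notation of Theorem \ref{thm2.1}, where now the weight is allowed to vary; write $I_\ve(\rho)$ for it. Since $\eta$ has a.s.\ continuous paths on the compact square $D$, the number $\kappa=\sup_{u\in D}\|\eta(u)\|$ is a.s.\ finite and every $\nu^\ve_k$ is supported in the ball $\ov B(0,\kappa)$; taking $\rho\equiv1$ in Theorem \ref{thm2.1}, the masses $\nu^\ve_k(\mbR^2)=I_\ve(1)$ converge in $L_2$ and are in particular bounded in $L_2$.

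Next I would fix a countable family $\mathcal{D}\subset C_c(\mbR^2)$ that is dense in the sup-norm on every ball, hence measure-determining. By Theorem \ref{thm2.1}, $I_\ve(\rho)\to\cT^\eta_k(\rho)$ in $L_2$ for each $\rho\in\mathcal{D}$, so a diagonal extraction yields a single sequence $\ve_j\downarrow0$ and a single a.s.\ event $\Omega_0$ on which $\kappa<\infty$, the masses $\nu^{\ve_j}_k(\mbR^2)$ converge, and $I_{\ve_j}(\rho)\to\cT^\eta_k(\rho)$ simultaneously for all $\rho\in\mathcal{D}$. On $\Omega_0$ the measures $\{\nu^{\ve_j}_k\}_j$ live on the fixed compact $\ov B(0,\kappa)$ and have bounded mass, hence are relatively compact in the topology of weak convergence by Prokhorov's theorem; any subsequential limit $\nu$ integrates each $\rho\in\mathcal{D}$ to $\cT^\eta_k(\rho)(\omega)$, and since $\mathcal{D}$ restricted to $\ov B(0,\kappa)$ is dense in $C(\ov B(0,\kappa))$ and the masses are bounded, $\nu$ is uniquely determined. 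Thus the whole sequence converges, $\nu^{\ve_j}_k\Rightarrow\nu_k(\omega)$, which I take as the definition of $\nu_k$ on $\Omega_0$ (and $\nu_k:=0$ off it). As $\int\rho\,d\nu_k=\lim_j I_{\ve_j}(\rho)$ is, for each $\rho\in\mathcal{D}$, a pointwise limit of random variables, $\omega\mapsto\nu_k$ is a genuine random finite measure, and $\int_{\mbR^2}\rho\,d\nu_k=\cT^\eta_k(\rho)$ a.s.\ for $\rho\in\mathcal{D}$, hence (after localising to $\{\kappa\le m\}$ and approximating uniformly on $\ov B(0,m)$) for every continuous bounded $\rho$. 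This already proves the lemma read with continuous weights.

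The remaining point — and the one I expect to be the crux — is to recover the displayed formula for a general Borel set $A$, i.e.\ that $\nu^\ve_k(A)\to\nu_k(A)$ in the $L_2$ sense of Definition \ref{defn2.1}; weak convergence by itself only handles sets with $\nu_k(\partial A)=0$. Here I would exploit that the chain of estimates proving Theorem \ref{thm2.1} is in fact quantitative: it gives
$$
|E\,I_{\ve_1}(\rho)I_{\ve_2}(\rho)|\le c\,\|\rho\|^2_\infty\int_D\ldots\int_D \frac{d\vec{u}\,d\vec{v}}{G(\Delta\eta_1(u_1),\ldots,\Delta\eta_1(v_{k-1}))},
$$
with the last integral finite, so that $\rho\mapsto\cT^\eta_k(\rho)$ is continuous from $(C_b(\mbR^2),\|\cdot\|_\infty)$ into $L_2(\Omega)$. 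Crucially, in that computation the weight variables $z_1,z_{k+1}$ are never smoothed by $f_\ve$ — only the increment variables are — so nothing there uses continuity of $\rho$; running it verbatim with the bounded measurable weight $1_A$ shows that $\lim_{\ve_1,\ve_2\to0}E\,\nu^{\ve_1}_k(A)\nu^{\ve_2}_k(A)$ exists and is independent of the two sequences, whence $\{\nu^\ve_k(A)\}_\ve$ is $L_2$-Cauchy and converges in $L_2$ to a limit $\cT^\eta_k(1_A)$. Finally, combining the weak convergence $\nu^{\ve_j}_k\Rightarrow\nu_k$ with the monotonicity $\nu^\ve_k(K)\le\nu^\ve_k(A)\le\nu^\ve_k(U)$ for compact $K\subset A\subset U$ open, the $L_2$-continuity just proved, and the outer/inner regularity of $\nu_k$, one identifies $\cT^\eta_k(1_A)$ with $\nu_k(A)$ for every Borel $A$; since $\nu_k(\mbR^2)=\cT^\eta_k(1)<\infty$ a.s., $\nu_k$ is finite, and the lemma follows.
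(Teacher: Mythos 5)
Your construction of $\nu_k$ (the first two paragraphs) is correct and is in substance the paper's own argument: both proofs extract a single subsequence $\ve_j\downarrow0$ along which the approximations converge almost surely for a countable, sup-norm-dense family of continuous test functions, and then invoke a compactness/representation principle to produce the measure. The paper uses polynomials with rational coefficients times a smooth cutoff $h^R$ and the Riesz representation of the resulting bounded positive functional on $C([-R;R]^2)$, letting $R\to+\infty$; you use $C_c(\mbR^2)$ test functions and Prokhorov's theorem on the random compact $\ov B(0,\kappa)$. These are interchangeable, and your version handles the support question once and for all. Up to this point you establish exactly what the paper's proof establishes: $\int\rho\,d\nu_k=\cT^{\eta}_k(\rho)$ for continuous $\rho$, plus finiteness of the total mass.

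Your third paragraph goes beyond the paper, which stops at continuous test functions and treats the $1_A$ formula as notation. The observation that the second-moment computation in Theorem \ref{thm2.1} never smooths the weight variables $z_1,z_{k+1}$, so that $I_\ve(1_A)$ is $L_2$-Cauchy for bounded measurable weights, is correct and worthwhile. But the identification of the limit $\cT^{\eta}_k(1_A)$ with $\nu_k(A)$ as you sketch it does not close. Portmanteau gives only the one-sided inequalities $\cT^{\eta}_k(1_K)\le\nu_k(K)$ for compact $K$ and $\cT^{\eta}_k(1_U)\ge\nu_k(U)$ for open $U$; inner/outer regularity of $\nu_k$ controls $\nu_k(U)-\nu_k(K)$ but says nothing about $\cT^{\eta}_k(1_U)-\cT^{\eta}_k(1_K)$, and the sup-norm continuity of $\rho\mapsto\cT^{\eta}_k(\rho)$ is useless for indicators. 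The abstract scheme genuinely fails: for $\delta_{x_j}\Rightarrow\delta_x$ with $x_j\to x$, $x_j\ne x$, the set function $A\mapsto\lim_j\delta_{x_j}(A)$ exists for every Borel $A$ yet differs from the weak limit on $\{x\}$. What rescues the claim here is the explicit second-moment formula
$$
E\,\cT^{\eta}_k(1_B)^2=\int_{\mbR^4}1_B(z_1)1_B(z_{k+1})\Theta(z_1,z_{k+1})\,dz_1dz_{k+1}
$$
with $\Theta$ integrable (its total integral is $E\nu_k(\mbR^2)^2<+\infty$ by Lemma \ref{lem2.7}), so that $E\,\cT^{\eta}_k(1_B)^2\to0$ as ${\rm Leb}(B)\to0$; this makes $A\mapsto\cT^{\eta}_k(1_A)$ a countably additive random measure integrating every $C_c$ function to the same value as $\nu_k$, hence equal to $\nu_k$. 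You should either insert this estimate in place of the regularity sandwich, or drop the Borel-set claim and, like the paper, state the identity for continuous weights only.
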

\begin{proof}
Let $\cP$ be a set of polynomials $\cP$ with rational coefficients. Consider a function
 $$
 \psi(y)=
 \begin{cases}
 Ce^{-\frac{1}{1-\|y\|^2}},\ \|y\|<1,\\
 0,\ \ \ \ \ \ \ \ \ \ \ \|y\|\geq 1,
 \end{cases}
 $$
 where $C$ is positive constant such that
$$
\int_{\mbR^2}\psi(y)dy=1.
$$
For $R>3$ put
$$
h^R(y)=\int_{\mbR^2}1_{[-R+2;R-2]^2}(z)\psi(y-z)dz.
$$
Note that $h^R\in C^{\infty}(\mbR^2).$
It follows from Theorem \ref{thm2.1} that for any $p\in\cP$ there exists
$$
\int_D\ldots\int_D p(\eta(u_1))h^R(\eta(u_1))\prod^{k-1}_{j=1}\delta_0(\eta(u_{i+1})-\eta(u_{i}))d\vec{u}=
$$
$$
=L_{2}-\lim_{\ve\to0}\int_D\ldots\int_D p(\eta(u_1))h^R(\eta(u_1))\prod^{k-1}_{j=1}f_{\ve}(\eta(u_{i+1})-\eta(u_{i}))d\vec{u}.
$$
There exist $\tilde{\Omega},\ P(\tilde{\Omega})=1$ and
subsequence $\{\ve_n,\ n\geq1\},\ \ve_n\to0,\ n\to+\infty$  such that for any $\omega\in\tilde{\Omega}$ and for any $p\in\cP$
$$
\Phi(p)=\int_D\ldots\int_D p(\eta(u_1))h^R(\eta(u_1))\prod^{k-1}_{j=1}\delta_0(\eta(u_{i+1})-\eta(u_{i}))d\vec{u}=
$$
$$
=\lim_{n\to+\infty}\int_D\ldots\int_D p(\eta(u_1))h^R(\eta(u_1))\prod^{k-1}_{j=1}f_{\ve_n}(\eta(u_{i+1})-\eta(u_{i}))d\vec{u}.
$$
Note that for any $\omega\in\tilde{\Omega},\ p_1,p_2\in\cP,\ r_1,r_2\in\mbQ$
$$
\Phi(r_1p_1+r_2p_2)=r_1\Phi(p_1)+r_2\Phi(p_2).
$$
Let us check that nonnegative linear functional $\Phi$ defined on $\cP$  is bounded, i.e. for any $\omega\in\tilde{\Omega}$
\begin{equation}
\label{eq2.11}
\|\Phi\|<+\infty,
\end{equation}
where
$$
\|\Phi\|=\sup_{p\in B(0,1)}|\Phi(p)|.
$$
Here
$$
B(0;1)=\Big\{\rho\in \cP:\sup_{[-R;R]^2}|p(z)|\leq 1\Big\}.
$$
Notice that for any $\omega\in\Omega$
$$
\int_D\ldots\int_D p(\eta(u_1))h^R(\eta(u_1))\prod^{k-1}_{j=1}\delta_0(\eta(u_{i+1})-\eta(u_{i}))d\vec{u}\leq
$$
$$
\leq \sup_{v\in[-R;R]^2}|p(v)|\int_D\ldots\int_D h^R(\eta(u_1))\prod^{k-1}_{j=1}\delta_0(\eta(u_{i+1})-\eta(u_{i}))d\vec{u}
$$
which proves the boundness of $\Phi.$ Therefore $\Phi$ nonnegative linear bounded functional on $\cP$ which can be extended to continuous linear functional on $C([-R;R]^2,\mbR).$ Consequently there exists a measure $\nu^{R}_k$ on $\cB([-R;R]^2)$ such that for any $f\in C([-R;R]^2,\mbR)$
$$
\Phi(f)=\int_{\mbR^2}f(u)\nu^{R}_k(du).
$$
Note that
$$
\nu^{R}_k([-R;R]^2)\leq
$$
$$
\leq\int_D\ldots\int_D\prod^{k-1}_{i=1}\delta_0(\eta(u_{i+1})-\eta(u_{i}))d\vec{u}.
$$
Consider
$$
f\in C_0(\mbR^2,\mbR),
$$
where $C_0(\mbR^2,\mbR)$ is the space of continuous functions $f:\mbR^2\to\mbR$ with a compact support. There exists $R>0$ such that $supp f\subset[-R+3;R-3]^2,$ then
$$
\int_{\mbR^2}f(u)\nu^{R}_k(du)=
$$
$$
\int_{D}\ldots\int_{D}f(\eta(u_1))\prod^{k-1}_{j=1}\delta_0(\eta(u_{i+1})-\eta(u_{i}))d\vec{u}.
$$
Moreover for $R^{\prime}>R$
$$
\int_{\mbR^2}f(u)\nu^{R^{\prime}}_k(du)=\int_{\mbR^2}f(u)\nu^{R}_k(du).
$$
Consequently
$$
\Phi(f)=\int_{\mbR^2}f(u)\nu^R_k(du)\to\int_{\mbR^2}f(u)\nu_k(du),\ R\to+\infty.
$$
Therefore there exists a measure $\nu_k$ on $\cB(\mbR^2)$ defined by the values of integrals from continuous functions on $\mbR^2$ with a compact support. Moreover
$$
\int_{\mbR^2}1\ \nu_k(du)=\lim_{R\to+\infty}\int_{\mbR^2}1\ \nu^R_k(du)=
$$
$$
=\int_{D}\ldots\int_{D}\prod^{k-1}_{j=1}\delta_0(\eta(u_{i+1})-\eta(u_{i}))d\vec{u}.
$$
\end{proof}
The following statements describe properties of measure $\nu_k$ which are important for our future considerations.
Let us check that the random variable $\nu_k(\mbR^2)$ has all moments.
\begin{lem}
\label{lem2.7}For any $m\geq 1$
$$
E(\nu_k(\mbR^2))^m<+\infty.
$$
\end{lem}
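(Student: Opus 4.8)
The plan is to reduce the claim to a bound on the moments of the $\ve$-approximations that is uniform in $\ve$. Put
$$
I_\ve=\int_D\ldots\int_D\prod_{i=1}^{k-1}f_\ve\big(\eta(u_{i+1})-\eta(u_i)\big)\,d\vec u .
$$
By Theorem \ref{thm2.1} with $\rho\equiv1$ and by Lemma \ref{lem2.6} we have $\nu_k(\mbR^2)=L_2-\lim_{\ve\to0}I_\ve$, so there is a sequence $\ve_n\to0$ along which $I_{\ve_n}\to\nu_k(\mbR^2)$ almost surely. Since $I_{\ve_n}\geq0$, Fatou's lemma gives $E(\nu_k(\mbR^2))^m\leq\liminf_n E(I_{\ve_n})^m$, and it therefore suffices to show that $\sup_{\ve>0}E(I_\ve)^m<+\infty$.

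Expanding the power and using Fubini's theorem,
$$
E(I_\ve)^m=\int_{(D^k)^m}E\Big[\prod_{j=1}^m\prod_{i=1}^{k-1}f_\ve\big(\eta(u^{(j)}_{i+1})-\eta(u^{(j)}_i)\big)\Big]\,d\vec u^{(1)}\ldots d\vec u^{(m)} .
$$
Because $f_\ve$ factors as a product of two one-dimensional Gaussian kernels $g_\ve(t)=(2\pi\ve)^{-1/2}e^{-t^2/(2\ve)}$ acting on the coordinates and $\eta_1,\eta_2$ are independent and equidistributed, the inner expectation equals the square of $E\prod_{j,i}g_\ve\big(\Delta\eta_1(u^{(j)}_i)\big)$, where $\Delta\eta_1(u^{(j)}_i)=\eta_1(u^{(j)}_{i+1})-\eta_1(u^{(j)}_i)$. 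For a centered Gaussian vector $X$ in $\mbR^N$ with covariance $\Sigma$ one has $Ee^{-\|X\|^2/(2\ve)}=\det(I+\ve^{-1}\Sigma)^{-1/2}$, whence
$$
E\prod_{\ell=1}^N g_\ve(X_\ell)=(2\pi)^{-N/2}\det(\Sigma+\ve I)^{-1/2}\leq(2\pi)^{-N/2}(\det\Sigma)^{-1/2}
$$
since $\det(\Sigma+\ve I)\geq\det\Sigma$ for nonnegative definite $\Sigma$. Applying this with $N=m(k-1)$ to the increments and recalling $\det\Sigma=G\big(\{\Delta\eta_1(u^{(j)}_i):1\leq j\leq m,\ 1\leq i\leq k-1\}\big)$, we obtain, uniformly in $\ve$,
$$
E\Big[\prod_{j,i}f_\ve\big(\eta(u^{(j)}_{i+1})-\eta(u^{(j)}_i)\big)\Big]\leq\frac{c}{G\big(\{\Delta\eta_1(u^{(j)}_i):1\leq j\leq m,\ 1\leq i\leq k-1\}\big)},\quad c=c(k,m)>0 ;
$$
so everything reduces to the finiteness of the integral of the right-hand side over $(D^k)^m$.

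This is where the estimate from the proof of Theorem \ref{thm2.1} is repeated with $m$ chains instead of two. Order the $m(k-1)$ increments chain by chain and argue as in Lemma \ref{lem2.2}: after the determinant-preserving substitution $\Delta\eta_1(u^{(j)}_i)\mapsto\eta_1(u^{(j)}_{i+1})-\eta_1(u^{(j)}_1)$ inside each chain, the Gram determinant is bounded below by a product, over all increment endpoints $w=u^{(j)}_{i+1}$, of conditional variances $Var\big(\eta_1(w)\mid\eta_1(q),\ q\in S\big)$, where $S$ is the finite set of sample points occurring in the earlier increments together with $u^{(j)}_1$. Enlarging the conditioning family to $\{\eta_1(p,\tau):|z-p|\geq\delta_1,\ |s-\tau|\geq\delta_2\}$ with $\delta_1,\delta_2$ the minimal first- and second-coordinate distances from $w=(z,s)$ to $S$ (truncated at $\tfrac13$, which is harmless), inequality \eqref{eq2.10} bounds the corresponding factor below by $Ks\,\delta_1^{\alpha+1}+\tfrac12\delta_2$. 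Using $\big(Ks\,\delta_1^{\alpha+1}+\tfrac12\delta_2\big)^{-1}\leq\sum_{q,r\in S}\big(Ks\,|z-z_q|^{\alpha+1}+\tfrac12|s-s_r|\big)^{-1}$ and expanding the product of these finite sums into a finite sum of products, one integrates the sample points out in decreasing order of position in the ordering; at each step the point removed enters at most one surviving factor, and
$$
\sup_{q\in[0;1],\,r\in[1;2]}\int_0^1\!\!\int_1^2\frac{ds\,dz}{Ks\,|z-q|^{\alpha+1}+\tfrac12|s-r|}<+\infty
$$
by the logarithmic estimate used at the end of the proof of Theorem \ref{thm2.1}. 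Hence each summand contributes a finite amount, the integral is finite, $\sup_\ve E(I_\ve)^m<+\infty$, and the lemma follows.

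The Gaussian identity and the one-dimensional analytic bound are essentially as in the case $m=2$; the only genuinely new point is the combinatorial bookkeeping of the multi-chain Gram determinant and of the reverse elimination of variables, and I expect the step needing the most care to be the verification that, in every term of the expansion, each sample point meets at most one surviving factor once the later-positioned points have been integrated out.
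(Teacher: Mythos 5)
Your proof is correct and follows essentially the same route as the paper's: both reduce $E(\nu_k(\mbR^2))^m$ to the convergence of $\int_{(D^k)^m}G\big(\Delta\eta_1(u^{(1)}_1),\ldots,\Delta\eta_1(u^{(m)}_{k-1})\big)^{-1}d\vec{u}^{(1)}\ldots d\vec{u}^{(m)}$ and then obtain that convergence from the multi-chain version of Lemma \ref{lem2.2} combined with estimate \eqref{eq2.10} and the integration scheme from the end of the proof of Theorem \ref{thm2.1}. The only (harmless) difference is in the reduction step: the paper cites \cite{5}, whereas you make it self-contained by the uniform bound $E\prod_{\ell}g_{\ve}(X_\ell)=(2\pi)^{-N/2}\det(\Sigma+\ve I)^{-1/2}\leq(2\pi)^{-N/2}(\det\Sigma)^{-1/2}$ followed by Fatou's lemma along an almost surely convergent subsequence.
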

\begin{proof}
Using the definition of measure $\nu_k$ one can write
$$
E(\nu_k(\mbR^2))^m=
$$
$$
E\Big(\int_{D}\ldots\int_{D}\prod^{k-1}_{i=1}\delta_0(\eta(u^1_{i+1})-\eta(u^1_{i}))\prod^{k-1}_{i=1}\delta_0(\eta(u^2_{i+1})-\eta(u^2_{i}))\ldots
$$
\begin{equation}
\label{eq2.17}
\ldots \delta_0(\eta(u^m_{i+1})-\eta(u^m_{i}))\Big)d\vec{u}.
\end{equation}
It follows from \cite{5} that \eqref{eq2.17} is finite if the following integral $$
\int_{D}\ldots\int_{D}\frac{1}{G(\Delta \eta_1(u^1_1),\ldots,\Delta\eta_1(u^1_{k-1}),\ldots,\Delta \eta_1(u^m_1),\ldots,\Delta\eta_1(u^m_{k-1}))}d\vec{u}
$$
converges.
It follows from Lemma \ref{lem2.1} that
$$
G(\Delta \eta_1(u^1_1),\ldots,\Delta\eta_1(u^1_{k-1}),\ldots,\Delta \eta_1(u^m_1),\ldots,\Delta\eta_1(u^m_{k-1}))\geq
$$
$$
\geq Var(\eta_1(u^1_2)|\eta_1(u^1_1))\ldots Var(\eta_1(u^1_k)|\eta_1(u^1_1,\ldots,\eta_1(u^1_{k-1}))\cdot
$$
$$
\cdot Var(\eta_1(u^2_1)|\eta_1(u^1_1),\ldots,\eta_1(u^1_{k-1}),\eta_1(u^2_{1}))\ldots
$$
$$
\ldots Var(\eta_1(u^m_k)|\eta_1(u^1_1),\ldots,\eta_1(u^1_{k-1}),\eta_1(u^2_{1}),\ldots,\eta_1(u^m_{k-1})) .
$$
Consequently
$$
\int_{D}\ldots\int_{D}\frac{1}{G(\Delta \eta_1(u^1_1),\ldots,\Delta\eta_1(u^1_{k-1}),\ldots,\Delta \eta_1(u^m_1),\ldots,\Delta\eta_1(u^m_{k-1}))}d\vec{u}\leq
$$
$$
\leq\int_{D}\ldots\int_{D}\frac{1}{Var(\eta_1(u^1_2)|\eta_1(u^1_1))}\ldots\frac{1}{Var(\eta_1(u^1_k)|\eta_1(u^1_1,\ldots,\eta_1(u^1_{k-1}))}\cdot
$$
$$
\cdot \frac{1}{Var(\eta_1(u^2_1)|\eta_1(u^1_1),\ldots,\eta_1(u^1_{k-1}),\eta_1(u^2_{1}))}\ldots
$$
$$
\ldots \frac{1}{Var(\eta_1(u^m_k)|\eta_1(u^1_1),\ldots,\eta_1(u^1_{k-1}),\eta_1(u^2_{1}),\ldots,\eta_1(u^m_{k-1}))}d\vec{u}.
$$
Applying estimate \eqref{eq2.10} and reaping arguments of proof of Theorem \ref{thm2.1} one can easily finish the proof of lemma.
\end{proof}
Next property of measure $\nu_k$ is important for investigation of asymptotics of self-intersection local time for the image of field $\eta$ under a solution to equation with interaction and will be applied in Section 5.
\begin{lem}
\label{lem2.8}
$$
E\int_{\mbR^4}\ln_+\frac{1}{\|u-v\|}\nu_k(du)\nu_k(dv)<+\infty.
$$
\end{lem}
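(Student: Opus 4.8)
The plan is to dominate the logarithmic kernel by the harmless power $\ln_+\frac{1}{r}\le\frac{1}{r}$ $(r>0)$ and then repeat the scheme of the proof of Theorem~\ref{thm2.1}, reducing everything to the integrability of the reciprocal of two Gram determinants. So it suffices to prove $E\int_{\mbR^4}\|u-v\|^{-1}\nu_k(du)\nu_k(dv)<+\infty$. First I would realize $\nu_k$, via Lemma~\ref{lem2.6}, as the a.s. vague limit along a subsequence $\ve_n\to0$ of the approximations $\nu_k^{\ve}(A)=\int_D\ldots\int_D 1_A(\eta(u_1))\prod_{i=1}^{k-1}f_{\ve}(\eta(u_{i+1})-\eta(u_i))\,d\vec u$. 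Since $w\mapsto\|w\|^{-1}$ is nonnegative and lower semicontinuous and no mass escapes to infinity (because $\nu_k^{\ve}(\mbR^2)\to\nu_k(\mbR^2)$ in $L_2$), the Portmanteau inequality together with Fatou's lemma gives
$$
E\int_{\mbR^4}\frac{\nu_k(du)\nu_k(dv)}{\|u-v\|}\ \le\ \sup_{\ve_1,\ve_2}\,E\int_D\ldots\int_D\frac{1}{\|\eta(u_1)-\eta(v_1)\|}\prod_{i=1}^{k-1}f_{\ve_1}(\Delta\eta(u_i))\prod_{i=1}^{k-1}f_{\ve_2}(\Delta\eta(v_i))\,d\vec u\,d\vec v .
$$
Computing the expectation under the integral through the Gaussian density of the $\mbR^2$-valued vector $(\eta(u_1)-\eta(v_1),\Delta\eta(u_1),\ldots,\Delta\eta(v_{k-1}))$ (whose two coordinate blocks are i.i.d., so the covariance determinant of each block equals $G_1:=G(\Delta\eta_1(u_1),\ldots,\Delta\eta_1(v_{k-1}))$), integrating out the convolution factors using $\det(A+\ve I)\ge\det A$ and $Var(\cdot\,|\,\text{noisier data})\ge Var(\cdot\,|\,\text{data})$, and finally integrating $\|w\|^{-1}$ against the conditional $\mbR^2$-Gaussian density of $\eta(u_1)-\eta(v_1)$ given the increments — this density is centered with covariance $\sigma^2 I_2$, where $\sigma^2:=Var(\eta_1(u_1)-\eta_1(v_1)\,|\,\Delta\eta_1(u_1),\ldots,\Delta\eta_1(v_{k-1}))$ — one gets a bound, uniform in $\ve_1,\ve_2$,
$$
\le\ c(k)\int_D\ldots\int_D\frac{d\vec u\,d\vec v}{G_1\,\sigma}\ =\ c(k)\int_D\ldots\int_D\frac{d\vec u\,d\vec v}{\sqrt{G_1\cdot G_T}},\qquad G_T:=G(\eta_1(u_1)-\eta_1(v_1),\Delta\eta_1(u_1),\ldots,\Delta\eta_1(v_{k-1})),
$$
the last equality being the chain rule for Gram determinants, $G_1\sigma^2=G_T$.

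By the Cauchy--Schwarz inequality it then suffices to show that $\int_D\ldots\int_D G_1^{-1}\,d\vec u\,d\vec v$ and $\int_D\ldots\int_D G_T^{-1}\,d\vec u\,d\vec v$ are both finite. The first integral is precisely the one shown to be finite in the course of the proof of Theorem~\ref{thm2.1}. For the second, the key observation is that the random variables $\eta_1(u_1)-\eta_1(v_1),\ \Delta\eta_1(u_1),\ldots,\Delta\eta_1(u_{k-1}),\ \Delta\eta_1(v_1),\ldots,\Delta\eta_1(v_{k-1})$ are, up to sign, exactly the consecutive increments of $\eta_1$ along the single path $(u_k,u_{k-1},\ldots,u_1,v_1,v_2,\ldots,v_k)$ running through all $2k$ points. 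Hence Lemma~\ref{lem2.2} applies to $G_T$, and combined with estimate \eqref{eq2.10} in the form \eqref{eq2.1000} for each successive conditional variance it yields
$$
G_T\ \ge\ \prod_{j=2}^{2k}\Big(K\,t_{(j)}\big(\min_{l<j}|y_{(j)}-y_{(l)}|\big)^{\alpha+1}+\frac{1}{2}\min_{l<j}|t_{(j)}-t_{(l)}|\Big),
$$
where $(y_{(j)},t_{(j)})$ denotes the $j$-th point of the path. Decoupling the minima through $\frac{1}{a+b}\le\sum_{l,l'}\frac{1}{a_l+b_{l'}}$ and integrating one point at a time starting from the end of the path, each integration being controlled by
$$
\sup_{q\in[0;1],\,r\in[1;2]}\int_0^1\int_1^2\frac{ds\,dz}{2K|z-q|^{\alpha+1}+|s-r|}<+\infty
$$
exactly as at the end of the proof of Theorem~\ref{thm2.1}, shows $\int_D\ldots\int_D G_T^{-1}\,d\vec u\,d\vec v<+\infty$, and the proof is complete.

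\textbf{Main obstacle.} The delicate step is the first one: justifying that the singular-on-the-diagonal weight $\|w\|^{-1}$ may be carried through the limit defining $\nu_k\otimes\nu_k$ and that the resulting Gaussian integral is dominated, uniformly in the regularization parameters, by $c(k)\int G_1^{-1}\sigma^{-1}$; this is routine given the machinery of Theorem~\ref{thm2.1} but requires keeping track of the conditional covariances after convolution. The conceptual point that makes the logarithmic (or any subquadratic power) weight harmless is the identity $G_1\sigma^2=G_T$ together with the fact that $G_T$ is itself a single-path Gram determinant, so that the estimate of Theorem~\ref{thm2.1} transfers verbatim and no new renormalization is needed.
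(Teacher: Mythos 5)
Your proposal is correct, and it reaches the same final computation as the paper (integrability of the reciprocal of a product of conditional variances along a chain through all $2k$ points, controlled by \eqref{eq2.1000} and the iterated integral at the end of the proof of Theorem \ref{thm2.1}), but it gets there by a genuinely different decomposition. The paper keeps the logarithmic weight, projects onto the span of $\eta(u_1),\eta(v_1)$, writes $\Delta\eta(u_i)=P\Delta\eta(u_i)+\zeta(u_i)$, and integrates $\ln_+\frac{1}{\|y\|}$ against the conditional Gaussian density of $\eta(v_1)$ given $\eta(u_1)$; this costs a factor $\|\zeta_{\eta(u_1)}(v_1)\|^{-2}=Var(\eta(v_1)|\eta(u_1))^{-1}$, which is then absorbed as one extra conditional variance in the chain. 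You instead pay only $\sigma^{-1}$ for the (locally integrable in $\mbR^2$) kernel $\|w\|^{-1}$, invoke the Gram chain rule $G_T=G_1\sigma^2$, and split $\int(G_1\sigma)^{-1}=\int(G_1G_T)^{-1/2}$ by Cauchy--Schwarz into two separate Gram-determinant integrals. The observation that $G_T$ is exactly the Gram determinant of the consecutive increments along the concatenated path $u_k,\ldots,u_1,v_1,\ldots,v_k$ is a nice structural point: it lets Lemma \ref{lem2.2} apply verbatim to a single chain, whereas the paper's route (and indeed its treatment of $G_1$ in Theorem \ref{thm2.1}) implicitly uses a two-chain variant of that lemma. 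Your approach also buys a cleaner justification of the passage to the limit (vague convergence from Lemma \ref{lem2.6} plus Portmanteau and Fatou for the lower semicontinuous kernel), where the paper argues formally through the delta-function identity. The only points to keep honest are the ones you already flag: the monotonicity $Var(X\,|\,Y+N)\geq Var(X\,|\,Y)$ for independent Gaussian noise $N$, and the uniform-in-$\ve$ bound $\int_{\mbR^2}\|w\|^{-1}\frac{1}{2\pi\sigma^2}e^{-\|w-m\|^2/(2\sigma^2)}dw\leq C\sigma^{-1}$ irrespective of the conditional mean $m$; both are routine.
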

\begin{proof}
Note that due to the definition of measure $\nu_k$
$$
E\int_{\mbR^4}\ln_+\frac{1}{\|u-v\|}\nu_k(du)\nu_k(dv)=
$$
$$
=E\int_D\ldots\int_D\ln_+\frac{1}{\|\eta(u)-\eta(v)\|}\prod^{k-1}_{i=1}\delta_0(\eta(u_{i+1})-\eta(u_{i}))\cdot
$$
\begin{equation}
\label{eq2.18}
\cdot\prod^{k-1}_{i=1}\delta_0(\eta(v_{i+1})-\eta(v_{i}))d\vec{u}d\vec{v}.
\end{equation}
Let $P$ be a projection in $L_2(\Omega,\cF,P)$ on linear subspace generated by $\eta(u_1),\eta(v_1).$ Then
$$
\Delta\eta(u_i)=P\Delta\eta(u_i)+\zeta(u_i),
$$
where $\zeta(u_i)=\Delta\eta(u_i)-P\Delta\eta(u_i).$ Taking conditional expectation with respect to $\eta(u_1),\eta(v_1)$ one can write that \eqref{eq2.18} less or equal to
$$
E\int_D\ldots\int_D\ln_+\frac{1}{\|\eta(u)-\eta(v)\|}\cdot
$$
$$
\cdot\frac{1}{G(\zeta(u_1),\ldots,\zeta(u_{k-1}),\zeta(v_1),\ldots,\zeta(v_{k-1}))}d\vec{u}d\vec{v}=
$$
$$
=E\ E\Big(\int_D\ldots\int_D\ln_+\frac{1}{\|\eta(u_1)-P_{\eta(u_1)}\eta(v_1)-\zeta_{\eta(u_1)}(v_1)\|}\cdot
$$
$$
\cdot\frac{1}{G(\zeta(u_1),\ldots,\zeta(u_{k-1}),\zeta(v_1),\ldots,\zeta(v_{k-1}))}d\vec{u}d\vec{v}|\eta(u_1)\Big)=
$$
$$
=E\int_D\ldots\int_D\int_{\mbR^2}\ln_+\frac{1}{\|y\|}\ \frac{1}{2\pi\|\zeta_{\eta(u_1)}(v_1)\|^2}e^{-\frac{\|y-\eta(u_1)+P_{\eta(u_1)}\eta(v_1)\|^2}{2\|\zeta_{\eta(u_1)}(v_1)\|^2
}}
$$
$$
\cdot\frac{1}{G(\zeta(u_1),\ldots,\zeta(u_{k-1}),\zeta(v_1),\ldots,\zeta(v_{k-1}))}dyd\vec{u}d\vec{v}\leq
$$
$$
\leq E\int_D\ldots\int_D\int^1_0\ln_+\frac{1}{r}dr\ \frac{1}{2\pi\|\zeta_{\eta(u_1)}(v_1)\|^2}\cdot
$$
$$
\cdot\frac{1}{G(\zeta(u_1),\ldots,\zeta(u_{k-1}),\zeta(v_1),\ldots,\zeta(v_{k-1}))}d\vec{u}d\vec{v}.
$$
Here $P_{\eta(u_1)}$ is a projection in $L_2(\Omega,\cF,P)$ on linear subspace generated by $\eta(u_1)$ and
$$
\zeta_{\eta(u_1)}(v_1)=\eta(v_1)-P_{\eta(u_1)}\eta(v_1).
$$
Let us check that
$$
\int_D\ldots\int_D \frac{1}{\|\zeta_{\eta(u_1)}(v_1)\|^2}\cdot
$$
$$
\cdot\frac{1}{G(\zeta(u_1),\ldots,\zeta(u_{k-1}),\zeta(v_1),\ldots,\zeta(v_{k-1}))}d\vec{u}d\vec{v}<+\infty.
$$
Really
$$
\|\zeta_{\eta(u_1)}(v_1)\|^2=E\Big(\eta(v_1)-P_{\eta(u_1)}\eta(v_1)\Big)^2\geq
$$
$$
\geq Var(\eta(v_1)|\eta(u_1),\ldots,\eta(u_{k-1})).
$$
Moreover
$$
Var\Big(\zeta(u_{i})|\zeta(u_1),\ldots,\zeta(u_{i-1})\Big)\geq
$$
$$
\geq Var\Big(\eta(u_{i+1})|\eta(u_1),\ldots,\eta(u_{i-1},\eta(u_i),\eta(v_1))\Big).
$$
Applying estimate \eqref{eq2.1000} and reaping arguments of proof of Theorem \ref{thm2.1} one can complete the proof.
\end{proof}
\section{Self-intersection local time for the image of Gaussian field under deterministic diffeomorphism}
\label{section3}
Consider the equation with interaction defined in Introduction
\begin{equation}
\label{eq3.1}
\begin{cases}
dx(u,t)=a(x(u,t),\mu_t)dt+\int_{\mbR^2}b(x(u,t),\mu_t,q)W(dt,dq)\\	
x(u,0)=u,\ u\in\mbR^2\\
\mu_t=\mu_0\circ x(\cdot,t)^{-1}.
\end{cases}
\end{equation}
Suppose that coefficients are Lipschitz functions with respect to spatial and measure-valued variable. Then there exists unique solution to equation with interaction \eqref{eq3.1} \cite{16} (p. 55). Moreover if coefficients are two times continuously differentiable with respect to spatial variable, then under a fixed $t$ the solution is a diffeomorphism \cite{16} (p. 66). Let $\mu_0$ be an occupation measure of Gaussian field $\eta(u),\ u\in D$ defined in Section 2, i.e.
$$
\mu_0(A)=\int_D1_{A}(\eta(u))du,\ A\in\cB(\mbR^2).
$$
Then $\mu_t$ is the occupation measure of the field $x(\eta(u),t),\ u\in D.$ Our main goal in this article is to prove the existence of self-intersection local time for the field $x(\eta(u),t),\ u\in D$ and describe its asymptotics as $t\to+\infty.$ Let us start from the existence of self-intersection local time. The field $x(\eta(u),t),\ u\in D$ is the image of Gaussian field $\eta(u),\ u\in D$ under the random diffeomorphism $x(\cdot,t):\mbR^2\to\mbR^2$ which depends on $\eta.$ To understand why a self-intersection local time exists consider a case when the diffeomorphism $x(\cdot,t)$ is deterministic.

Consider a field $F(\eta(u)),\ u\in D,$ where $F:\mbR^2\to\mbR^2$ is a deterministic diffeomorphism. To define an approximating family $\cT^{F(\eta)}_{\ve,k}$ we use a delta family of functions introduced in \cite{27}
$$
f^{F}_{\ve}(v_1,\ldots,v_{k})=\frac{1}{|\det F^{\prime}(F^{-1}(v_1))|^{k-1}}\prod^{k-1}_{i=1}f_{\ve}(F^{-1}(v_{i+1})-F^{-1}(v_{i})),
$$
where $f_{\ve}$ from \eqref{eq2.100}.
It can be checked that the family $\{f^{F}_{\ve},\ \ve>0\}$ approximates the delta function, i.e. for any $v_k\in\mbR^2,\ \varphi\in C_{b}(\mbR^{2(k-1)})$
$$
\int_{\mbR^2(k-1)}\varphi(v_1,\ldots,v_{k-1})f^{F}_{\ve}(v_1,\ldots,v_k)d\vec{v}\to \varphi(v_k,\ldots,v_k)
$$
as $\ve\to 0.$ For proof see \cite{27}.
Hence an approximating family for $\cT^{F(\eta)}_{\ve,k}$  can have the following representation
$$
\cT^{F(\eta)}_{\ve,k}=\int_D\ldots\int_D f^{F}_{\ve}(F(\eta(u_{1})),\ldots,F(\eta(u_{k}))d\vec{u}=
$$
$$
=\int_D\ldots\int_D \frac{1}{|\det F^{\prime}(\eta(u_1))|^{k-1}}\prod^{k-1}_{i=1}f_{\ve}(\eta(u_{i+1})-\eta(u_{i}))d\vec{u}=
$$
$$
=\cT^{\eta}_{\ve,k}(\frac{1}{|\det F^{\prime}|^{k-1}}).
$$
Applying Theorem \ref{thm2.1} and Remark \ref{remk2.1} one can conclude the following statement.
\begin{thm}
\label{thm3.1}
There exists
$\cT^{F(\eta)}_{k}=L_2\mbox{-}\lim_{\ve\to0}\cT^{F(\eta)}_{\ve, k}$
and
$$
\cT^{F(\eta)}_{k}=\cT^{\eta}_{k}\Big(\frac{1}{|\det F^{\prime}|^{k-1}|}\Big).
$$
\end{thm}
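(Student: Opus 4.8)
The plan is to reduce the statement to Theorem~\ref{thm2.1} by means of the identity
$$
\cT^{F(\eta)}_{\ve,k}=\cT^{\eta}_{\ve,k}\Big(\frac{1}{|\det F'|^{k-1}}\Big)
$$
already recorded before the statement, and then to pass to the $L_2$-limit as $\ve\to0$ on the right-hand side. The essential observation is that this identity is an equality between the $\ve$-approximations themselves, for each fixed $\ve>0$; hence the convergence question for the new approximating family is literally the convergence question already settled in Section~2 for an appropriate weight, and no further analysis of Gram determinants is needed.

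First I would justify the displayed identity. Since $F:\mbR^2\to\mbR^2$ is a diffeomorphism, $F^{-1}(F(z))=z$ for every $z$; substituting $v_i=F(\eta(u_i))$ into the delta-family $f^{F}_{\ve}$ of \cite{27} therefore turns each factor $f_{\ve}(F^{-1}(v_{i+1})-F^{-1}(v_i))$ into $f_{\ve}(\eta(u_{i+1})-\eta(u_i))$ and the prefactor $|\det F'(F^{-1}(v_1))|^{-(k-1)}$ into $|\det F'(\eta(u_1))|^{-(k-1)}$. Thus
$$
\cT^{F(\eta)}_{\ve,k}=\int_D\cdots\int_D\frac{1}{|\det F'(\eta(u_1))|^{k-1}}\prod_{i=1}^{k-1}f_{\ve}(\eta(u_{i+1})-\eta(u_i))\,d\vec{u}=\cT^{\eta}_{\ve,k}(\rho),
$$
with the weight $\rho(z)=1/|\det F'(z)|^{k-1}$. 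This step is purely algebraic.

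Second I would check that $\rho$ is an admissible weight for Section~2. Because $F$ is a diffeomorphism, the Jacobian matrix $F'(z)$ is invertible at every point, so $\det F'$ is continuous and nowhere vanishing, whence $z\mapsto 1/|\det F'(z)|^{k-1}$ is continuous on $\mbR^2$. It need not be bounded, but Theorem~\ref{thm2.1} together with Remark~\ref{remk2.1} covers continuous, possibly unbounded weights on $\mbR^2$; here the truncation is especially harmless because $\eta$ is continuous on the compact $D$, so the self-intersection measure $\nu_k$ of Lemma~\ref{lem2.6} has almost surely bounded support and any continuous weight is $\nu_k$-integrable. Applying Theorem~\ref{thm2.1} and Remark~\ref{remk2.1} to $\rho$ gives
$$
\cT^{\eta}_{k}\Big(\frac{1}{|\det F'|^{k-1}}\Big)=L_2\mbox{-}\lim_{\ve\to0}\cT^{\eta}_{\ve,k}\Big(\frac{1}{|\det F'|^{k-1}}\Big),
$$
and combining this with the identity of the previous paragraph yields both the existence of $\cT^{F(\eta)}_k$ and the asserted formula.

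I do not anticipate a genuine obstacle: all the hard analysis (the conditional-variance lower bounds via Lemma~\ref{lem2.1} and Lemma~\ref{lem2.2}, hence the integrability of $1/G$) is already contained in the proof of Theorem~\ref{thm2.1}. The only two points deserving a line of care are that the delta-family substitution is carried out at the level of the $\ve$-approximations \emph{before} taking the limit, so that nothing new must be proved, and that $\rho$ is unbounded in general, so that one must appeal to Remark~\ref{remk2.1} rather than to the bare statement of Theorem~\ref{thm2.1}; making the latter explicit is the routine argument with $\rho_N=\rho\wedge N$, Lemma~\ref{lem2.7}, and dominated convergence against the finite measure $\nu_k$.
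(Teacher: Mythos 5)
Your argument is exactly the paper's: the identity $\cT^{F(\eta)}_{\ve,k}=\cT^{\eta}_{\ve,k}\big(|\det F'|^{-(k-1)}\big)$ is established algebraically at the level of the approximations via the delta-family $f^F_\ve$, and the conclusion then follows by applying Theorem \ref{thm2.1} together with Remark \ref{remk2.1} to the continuous (possibly unbounded) weight $1/|\det F'|^{k-1}$. Your additional remarks on why the truncation in Remark \ref{remk2.1} is harmless are a slight elaboration of what the paper leaves implicit, but the route is the same.
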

Let us apply Theorem \ref{thm3.1} to understand asymptotics of $\cT^{x(\eta,t)}_k$ in the following case.
\begin{expl}
\label{expl3.1}
Let $\eta(u),\ u\in D$ be a Gaussian random field defined in Section 2 and $\mu_0$ be an occupation measure of $\eta.$ Consider the following deterministic equation with interaction
$$
	\begin{cases}
		dx(u,t)=(Ax(u,t)+\int_{\mbR^2}v\mu_t(dv))dt\\	
		x(u,0)=u\\
		\mu_t=\mu_0\circ x(\cdot,t)^{-1},
		\end{cases}
$$
where $x(\cdot,t):\mbR^2\to\mbR^2$ and $A\in R^{2\times 2}$ is some deterministic matrix.
Denote by
$$
\varphi(t)=\int_{\mbR^2}v\mu_t(dv).
$$
Note that
$$
\varphi(t)=\int_{\mbR^2}x(v,t)\mu_0(dv)=
$$
$$
=\int_Dx(\eta(u),t)du.
$$
Using the variation of parameters formula one can obtain
\begin{equation}
\label{eq3.2}
x(v,t)=e^{At}v+\int^t_0e^{A(t-\tau)}\varphi(\tau)d\tau.
\end{equation}
It implies that
	$$
	\varphi(t)=e^{(A+I)t}\varphi(0).
	$$
Substituting in \eqref{eq3.2} the expression for $\varphi$ one can obtain
	$$
	x(v,t)=e^{At}v+(e^{t}-1)e^{At}\varphi(0).
	$$
Writing $\eta(u)$ for $v$ in the previous expression one can get the following process
	$$
	x(\eta(u),t)=e^{At}\eta(u)+(e^{t}-1)e^{At}\varphi(0).
	$$
Denote by
	$$
	B(t)=e^{At},\ C(t)=(e^{t}-1)e^{At}.
	$$
Note that
	$$
	\varphi(0)=\int_D \eta(u)du.
	$$
Then
	$$
	x(\eta(u),t)=B(t)\eta(u)+C(t)\int_D \eta(u)du.
	$$
Note that according to Liouville's formula
	$$
	\det B(t)=e^{t\  tr A}.
	$$
It follows from Theorem \ref{thm3.1} that there exists
$$
\cT^{x(\eta,t)}_{k}=L_2-\lim_{\ve\to0}\cT^{x(\eta,t)}_{\ve,k}
$$
and
$$
\cT^{x(\eta,t)}_{k}=e^{-(k-1)t\  tr A}\cT^{\eta}_{k}.
$$
If $tr A>0,$ then $\cT^{x(\eta,t)}_{k}\to 0,\ t\to +\infty$ a.s. If $tr A<0,$ then $|\cT^{x(\eta,t)}_{k}|\to +\infty,\  t\to +\infty$ on the set $\{\omega:\ \cT^{\eta}_{k}\neq 0 \}.$
\end{expl}
\section{Self-intersection local time for Gaussian field in the flow of interacting particles}
Consider deterministic equation with interaction in $\mbR^2$
\begin{equation}
\label{eq4.1}
\begin{cases}
dx(v,t)=a(x(v,t),\mu_t)dt,\\
x(v,0)=v,\ v\in\mbR^2,\\
\mu_t=\mu\circ x(\cdot,t)^{-1}.
\end{cases}
\end{equation}
Suppose that coefficient $a:\mbR^2\times\cM\to\mbR^2$ is two times continuously differentiable with respect to spatial variable. Here $\cM$ is the space of all probability measures on $\cB(\mbR^2)$ with Wasserstein distance $\gamma$, i.e. for any $\nu_1,\nu_2\in\cM$
$$
\gamma(\nu_1,\nu_2)=\inf_{C(\nu_1,\nu_2)}\int_{\mbR^2}\int_{\mbR^2}\frac{\|u-v\|}{1+\|u-v\|}\kappa(du,dv),
$$
where $C(\nu_1,\nu_2)$ is a set of all probabilities measures on $\cB(\mbR^4)$ with marginal distributions $\nu_1$ and $\nu_2.$ Suppose that $a$ and its derivatives satisfy Lipschitz condition with respect to spatial and measure-valued variables with constants $L$ and $L^{\prime}$ and derivatives of $a$ are bounded. Let the initial measure $\mu_0$ in \eqref{eq4.1} be an occupation measure of Gaussian field $\eta:D\to\mbR^2$ defined in Section 1. The main aim of present section is to define a self-intersection local time for a field
$x(\eta(u),t),\ u\in D$ with $t$ being fixed. Here $x$ is solution to \eqref{eq4.1} with initial measure equals an occupation measure of the field $\eta.$ Hence in comparison with Section 3 now diffeomorphism $x$ is random and depends on $\eta.$ These new difficulties can be solved in a few steps. Firstly one can approximate an occupation measure of the field $\eta$ by the sequence of discrete measures $\mu_n,\ n\geq1$ depending on the finite number of values of the field $\eta.$ After that one can consider equation \eqref{eq4.1} with the initial measure $\mu_n.$ Let $x_n$ be a solution to equation \eqref{eq4.1}. The diffeomorphism $x_n$ is random and depends on the finite number of values of the field $\eta.$ Therefore one have to understand how to define self-intersection local time for the field $x_n(\eta(u),t),\ u\in D.$ Secondly, one have to prove the existence of limit of self-intersection local time as $n\to+\infty.$ To make the first step consider $F:\mbR^{2(n+1)}\to\mbR^2.$ Suppose that $F$ is diffeomorphism with respect to last variable under the first $n$ variables being fixed. Construct random diffeomorphism depending on the finite number of values of the field $\eta$ as follows
$$
F(\eta(v_1),\ldots,\eta(v_n),\eta(u)),\ u\in D.
$$
The following statement shows that for defined random field self-intersection local time exists.
\begin{thm}
\label{thm4.1}
There exists self-intersection local time
$$
\int_{D}\ldots\int_{D}\prod^{k-1}_{j=1}\delta_0\Big(F(\eta(v_1),\ldots,\eta(v_n),\eta(u_{j+1}))-
$$
$$
-F(\eta(v_1),\ldots,\eta(v_n),\eta(u_{j}))\Big)d\vec{u}.
$$
\end{thm}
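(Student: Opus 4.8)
The plan is to condition on the finite collection $\eta(v_1),\ldots,\eta(v_n)$: this freezes the random diffeomorphism into a deterministic one and leaves $\eta$ a (still Gaussian) field to which the argument of Theorem~\ref{thm2.1} applies almost verbatim.

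First, for almost every $\omega$ the map $G_\omega(\cdot):=F(\eta(v_1,\omega),\ldots,\eta(v_n,\omega),\cdot):\mbR^2\to\mbR^2$ is, by the hypothesis on $F$, a deterministic $C^1$-diffeomorphism and $F(\eta(v_1),\ldots,\eta(v_n),\eta(u))=G_\omega(\eta(u))$. As in Section~\ref{section3} I would pass to the delta-family $f^{G_\omega}_\ve$ adapted to $G_\omega$; substituting $v_i=G_\omega(\eta(u_i))$ collapses the approximating integrals to
$$
\cT^{\eta}_{\ve,k}(\rho_\omega)=\int_D\ldots\int_D\frac{1}{|\det G_\omega'(\eta(u_1))|^{k-1}}\prod_{j=1}^{k-1}f_\ve(\eta(u_{j+1})-\eta(u_j))\,d\vec u ,
$$
with weight $\rho_\omega(z)=|\det G_\omega'(z)|^{-(k-1)}$, which is continuous in $z$ and measurable with respect to $\cG:=\sigma(\eta(v_1),\ldots,\eta(v_n))$. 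Since $\eta$ is continuous on the compact set $D$, the image $\eta(D)$ is almost surely compact, $\rho_\omega$ is bounded on it, and Remark~\ref{remk2.1} covers it once the $\eta(v_i)$ are frozen.

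Next I would condition on $\cG$. Given $\cG$ the field $\{\eta(u):u\in D\}$ is Gaussian with $\cG$-measurable mean and deterministic conditional covariance, and $\rho_\omega$ becomes a fixed continuous weight, so the conditional second moment $E\bigl[\cT^{\eta}_{\ve_1,k}(\rho_\omega)\cT^{\eta}_{\ve_2,k}(\rho_\omega)\mid\cG\bigr]$ may be analysed exactly as $EI_{\ve_1}I_{\ve_2}$ in the proof of Theorem~\ref{thm2.1}; as there, Lebesgue's theorem applies once one knows that, for a.e.\ frozen value of $(\eta(v_1),\ldots,\eta(v_n))$, the spatial integral of the reciprocal of the conditional Gram determinant $\widetilde G$ of all the $\eta_1$-increments occurring in the second moment is finite. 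Here $\widetilde G$ is the Gram determinant of those increments after projecting off $\mathrm{span}\{\eta_1(v_1),\ldots,\eta_1(v_n)\}$, so by Lemma~\ref{lem2.2} and the tower property for Gaussian conditional variances it dominates a product of variances of the form $\mathrm{Var}\bigl(\eta_1(u_{j+1})\mid\eta_1(u_1),\ldots,\eta_1(u_j),\eta_1(v_1),\ldots,\eta_1(v_n)\bigr)$; by \eqref{eq2.10}, and in the notation of \eqref{eq2.1000}, each of these is at least $Ks_{j+1}\bigl(\min_q|z_{j+1}-q|\bigr)^{\alpha+1}+\tfrac12\min_r|s_{j+1}-r|$, where the two minima now run over the spatial, resp.\ temporal, coordinates of $u_1,\ldots,u_j$ \emph{and} of the frozen points $v_1,\ldots,v_n$. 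Majorising the reciprocal of this by the sum over the enlarged (still finite) collection of points and integrating, exactly as at the end of the proof of Theorem~\ref{thm2.1}, the terms indexed by a frozen $v_i$ have that point held fixed and contribute a quantity bounded by the constant $c_3$ uniformly in it, while the terms indexed by some $u_\ell$ are those already estimated there; the non-zero conditional mean is irrelevant, since only covariances enter these bounds. Hence the integral is finite and $E\bigl[(\cT^{\eta}_{\ve_1,k}(\rho_\omega)-\cT^{\eta}_{\ve_2,k}(\rho_\omega))^2\mid\cG\bigr]\to0$ as $\ve_1,\ve_2\to0$, almost surely.

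Finally I would remove the conditioning. The same estimates bound $E\bigl[(\cT^{\eta}_{\ve_1,k}(\rho_\omega)-\cT^{\eta}_{\ve_2,k}(\rho_\omega))^2\mid\cG\bigr]$, uniformly in $\ve_1,\ve_2$, by a $\cG$-measurable and almost surely finite random variable --- a constant times $\sup_{z\in\eta(D)}|\rho_\omega(z)|^2$ times the finite integral of the previous step --- so dominated convergence gives $E\bigl[(\cT^{\eta}_{\ve_1,k}(\rho_\omega)-\cT^{\eta}_{\ve_2,k}(\rho_\omega))^2\bigr]\to0$, i.e.\ the $L_2$-limit defining the self-intersection local time exists (alternatively, to exhibit the limiting random variable itself it is enough to extract an almost surely convergent subsequence, as in Lemma~\ref{lem2.6}). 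I expect the main obstacle to be precisely the conditional Gram-determinant bound of the third step: the frozen points $v_1,\ldots,v_n$ may lie arbitrarily close to the integration variables $u_j$, and the resolution is the observation that freezing finitely many points only appends finitely many terms to the majorising sum, each of them integrable with the same constant $c_3$ already produced in Theorem~\ref{thm2.1}. When in addition $F$ has bounded and non-degenerate Jacobian in its last variable --- as happens when $F$ is built from the flow of \eqref{eq4.1} with bounded-derivative coefficients --- the weight bound in the last step is deterministic and that step is immediate.
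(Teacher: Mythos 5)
Your proposal is correct and follows essentially the same route as the paper: condition on $\eta(v_1),\ldots,\eta(v_n)$ (the paper writes this as the decomposition $\eta(u)=f(u)+\zeta(u)$ with $\zeta$ the residual field, which is your ``projecting off the span''), reduce to the second-moment computation of Theorem~\ref{thm2.1} with the Gram determinant now built from conditional variances given the frozen points as well, and observe via \eqref{eq2.10}--\eqref{eq2.1000} that appending finitely many conditioning points only enlarges the majorising sum by finitely many integrable terms, uniformly in $v_1,\ldots,v_n$. Your explicit handling of that last uniformity point is in fact slightly more detailed than the paper's, which simply asserts it was shown in the proof of Theorem~\ref{thm2.1}.
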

\begin{proof} Similar to arguments of Section 2 it suffices to prove the statement in the case when derivative of $F$ with respect to $n+1$ variable is bounded. So from now we suppose that
there exist positive constants $c_1,\ c_2$ such that
\begin{equation}
\label{eq4.2}
c_1\leq|\det F^{\prime}_{n+1}|\leq c_2.
\end{equation}
Here $F^{\prime}_{n+1}$ is a derivative with respect to $n+1$ variable.
Note that for an arbitrary $N\geq1$ the distribution of
$$
\eta(v_1),\ldots,\eta(v_n),\eta(u_1),\ldots,\eta(u_N)
$$
is nondegenerated. To approximate the self-intersection local time for the random field
$$
F(\eta(v_1),\ldots,\eta(v_n),\eta(u)),\ u\in D
$$
we will use the same delta family of functions which we introduced in Section 3
$$
f^{F}_{\ve}(v_1,\ldots,v_{k})=\frac{1}{|\det F^{\prime}(F^{-1}(v_1))|^{k-1}}\prod^{k-1}_{i=1}f_{\ve}(F^{-1}(v_{i+1})-F^{-1}(v_{i})),
$$
where
$$
f_{\ve}(y)=\frac{1}{2\pi\ve}e^{-\frac{\|y\|^2}{2\ve}},\ y\in\mbR^2.
$$
Set
$$
I_{\ve}=\int_{D}\ldots\int_{D}\frac{1}{|\det F^{\prime}_{n+1}(\eta(v_1),\ldots,\eta(v_n),\eta(u_1))|^{k-1}}\cdot
$$
$$
\cdot\prod^{k-1}_{j=1}f_{\ve}(\eta(u_{i+1})-\eta(u_i))d\vec{u}.
$$
Then to prove the theorem it suffices to check that there exists a finite limit
$EI_{\ve_1}I_{\ve_2}$ as $\ve_1,\ \ve_2\to0.$
Note that
$$
EI_{\ve_1}I_{\ve_2}=
$$
$$
=\int_{D}\ldots\int_{D}E\ \frac{1}{|\det F^{\prime}_{n+1}(\eta(v_1),\ldots,\eta(v_n),\eta(u_1))|^{k-1}}\cdot
$$
$$
\cdot\frac{1}{|\det F^{\prime}_{n+1}(\eta(v_1),\ldots,\eta(v_n),\eta(u^{\prime}_1))|^{k-1}}\cdot
$$
$$
\cdot \prod^{k-1}_{i_{1}=1}f_{\ve_1}(\eta(u_{i_{1}+1})-\eta(u_{i_1}))\cdot
$$
$$
\cdot\prod^{k-1}_{i_{2}=1}f_{\ve_2}(\eta(u^{\prime}_{i_{2}+1})-\eta(u^{\prime}_{i_2})) d\vec{u}d\vec{u^{\prime}}.
$$
Let us take a conditional expectation with respect to $\eta(v_1),\ldots,\eta(v_n).$
For $u\in D,\ \eta(u)$ admits representation
$$
\eta(u)=f(u)+\zeta(u),
$$
where
$$
f(u)=E(\eta(u)|\ \eta(v_1),\ldots,\eta(v_n))
$$
and $\zeta(u),\ u\in D$ is centered $\mbR^2$-valued Gaussian random field. Moreover for any $u\in D$ random vectors $\zeta(u)$ and $f(u)$ are independent. Assumptions on covariance function of $\eta$ imply that $f(u),\ u\in D$
is a continuous function. One can check that
$$
Var(\zeta_1(u_{j+1})|\zeta_1(u_{1}),\ldots,\zeta_1(u_{j}))=
$$
\begin{equation}
\label{eq4.5}
=Var(\eta_1(u_{j+1})|\eta_1(v_1),\ldots,\eta_1(v_n),\eta_1(u_{1}),\ldots,\eta_1(u_{j})).
\end{equation}
Note that
$$
EI_{\ve_1}I_{\ve_2}=EE\Big(\int_{D}\ldots\int_D\ \frac{1}{|\det F^{\prime}_{n+1}(\eta(v_1),\ldots,\eta(v_n),\eta(u_1))|^{k-1}}\cdot
$$
$$
\cdot\frac{1}{|\det F^{\prime}_{n+1}(\eta(v_1),\ldots,\eta(v_n),\eta(u^{\prime}_1))|^{k-1}}
 \prod^{k-1}_{i_{1}=1}f_{\ve_1}(\eta(u_{i_{1}+1})-\eta(u_{i_1}))\cdot
$$
$$
\cdot\prod^{k-1}_{i_{2}=1}f_{\ve_2}(\eta(u^{\prime}_{i_{2}+1})-\eta(u^{\prime}_{i_2})) d\vec{u}d\vec{u^{\prime}}| \eta(v_1),\ldots,\eta(v_n)\Big)=
$$
$$
=E_\eta\ E_\zeta \int_{D}\ldots\int_D\ \frac{1}{|\det F^{\prime}_{n+1}(\eta(v_1),\ldots,\eta(v_n),f(u_1)+\zeta(u_1))|^{k-1}}\cdot
$$
$$
\cdot\frac{1}{|\det F^{\prime}_{n+1}(\eta(v_1),\ldots,\eta(v_n),f(u^{\prime}_1)+\zeta(u^{\prime}_1))|^{k-1}}\cdot
$$
$$
\cdot \prod^{k-1}_{i_{1}=1}f_{\ve_1}(\Delta\zeta(u_{i_{1}})+\Delta f(u_{i_1}))
\prod^{k-1}_{i_{2}=1}f_{\ve_2}(\Delta\zeta(u^{\prime}_{i_{2}})+\Delta f(u^{\prime}_{i_2})) d\vec{u}d\vec{u^{\prime}},
$$
where $E_{\eta}$ and $E_{\zeta}$ are expectations with respect to $(\eta(v_1),\ldots,\eta(v_n))$ and $(\zeta(u_1),\ldots,\zeta(u_k)-\zeta(u_{k-1}),\zeta^{\prime}(u_1),\ldots,\zeta(u^{\prime}_{k})-\zeta(u^{\prime}_{k-1})).$
Using Lebesgue dominated convergence theorem let us check that there exists finite limit
$$
\int_{D}\ldots\int_D E_\zeta\frac{1}{|\det F^{\prime}_{n+1}(\eta(v_1),\ldots,\eta(v_n),f(u_1)+\zeta(u_1))|^{k-1}}\cdot
$$
$$
\cdot\frac{1}{|\det F^{\prime}_{n+1}(\eta(v_1),\ldots,\eta(v_n),f(u^{\prime}_1)+\zeta(u^{\prime}_1))|^{k-1}}\cdot
$$
$$
\cdot \prod^{k-1}_{i_{1}=1}f_{\ve_1}(\Delta\zeta(u_{i_{1}})+\Delta f(u_{i_1}))\prod^{k-1}_{i_{2}=1}f_{\ve_2}(\Delta\zeta(u^{\prime}_{i_{2}+1})+\Delta f(u^{\prime}_{i_2})) d\vec{u}d\vec{u^{\prime}}
$$
as $\ve_1,\ve_2\to0.$ To do this let us notice that
a covariance matrix of the vector
$$
(\zeta(u_1),\ldots,\zeta(u_k)-\zeta(u_{k-1}),\zeta^{\prime}(u_1),\ldots,\zeta(u^{\prime}_{k})-\zeta(u^{\prime}_{k-1}))
$$
is nondegenerated. Hence the density of this vector $\tilde{p}$ can be represented as follows
$$
\tilde{p}(x_1,y_1,\ldots,y_{k-1},x^{\prime}_1,y^{\prime}_1,\ldots,y^{\prime}_{k-1})=
$$
$$
=p(x_1,x^{\prime}_1)q_{x_1,x^{\prime}_1}(y_1,\ldots,y_{k-1},y^{\prime}_1,\ldots,y^{\prime}_{k-1}),
$$
where a conditional density $q_{x_1,x^{\prime}_1}$ is continuous and bounded with respect to all variables. Then one can conclude that
$$
E_\zeta\ \frac{1}{|\det F^{\prime}_{n+1}(\eta(v_1),\ldots,\eta(v_n),f(u_1)+\zeta(u_1))|^{k-1}}\cdot
$$
$$
\cdot\frac{1}{|\det F^{\prime}_{n+1}(\eta(v_1),\ldots,\eta(v_n),f(u^{\prime}_1)+\zeta(u^{\prime}_1))|^{k-1}}\cdot
$$
$$
\cdot \prod^{k-1}_{i_{1}=1}f_{\ve_1}(\Delta\zeta(u_{i_{1}})+\Delta f(u_{i_1}))\prod^{k-1}_{i_{2}=1}f_{\ve_2}(\Delta\zeta(u^{\prime}_{i_{2}})+ \Delta f(u^{\prime}_{i_2})) d\vec{u}d\vec{u^{\prime}}
$$
converges to
$$
\int_{\mbR^2}\int_{\mbR^2}\ \frac{1}{|\det F^{\prime}_{n+1}(\eta(v_1),\ldots,\eta(v_n),f(u_1)+x_1)|^{k-1}}\cdot
$$
$$
\cdot\frac{1}{|\det F^{\prime}_{n+1}(\eta(v_1),\ldots,\eta(v_n),f(u^{\prime}_1)+x^{\prime}_1)|^{k-1}}\cdot
$$
$$
\cdot p(x_1,x^{\prime}_1) q_{x_1,x^{\prime}_1}(\Delta f(u_1),\ldots,\Delta f(u_{k-1}),\Delta f(u^{\prime}_1),\ldots,\Delta f(u^{\prime}_{k-1}))dx_1dx^{\prime}_1
$$
as $\ve_1,\ \ve_2\to0.$
Moreover, using estimates \eqref{eq4.2} one can see that
$$
E_\zeta\ \frac{1}{|\det F^{\prime}_{n+1}(\eta(v_1),\ldots,\eta(v_n),f(u_1)+\zeta(u_1))|^{k-1}}\cdot
$$
$$
\cdot\frac{1}{|\det F^{\prime}_{n+1}(\eta(v_1),\ldots,\eta(v_n),f(u^{\prime}_1)+\zeta(u^{\prime}_1))|^{k-1}}\cdot
$$
$$
\cdot \prod^{k-1}_{i_{1}=1}f_{\ve_1}(\Delta\zeta(u_{i_{1}})+\Delta f(u_{i_1}))
\prod^{k-1}_{i_{2}=1}f_{\ve_2}(\Delta\zeta(u^{\prime}_{i_{2}})+\Delta f(u^{\prime}_{i_2})) \leq
$$
$$
\leq\frac{c}{G(\Delta\zeta(u_1),\ldots,\Delta\zeta(u_{k-1}),\Delta\zeta(u^{\prime}_1),\ldots,\Delta\zeta(u^{\prime}_{k-1}))},\ c>0,
$$
$$
\Delta\zeta(u_i)=\zeta(u_i)-\zeta(u_{i-1}).
$$
Let us check that
$$
\int_{D}\ldots\int_D\frac{1}{G(\Delta\zeta(u_1),\ldots,\Delta\zeta(u_{k-1}),\Delta\zeta(u^{\prime}_1),\ldots,\Delta\zeta(u^{\prime}_{k-1}))}d\vec{u}<+\infty.
$$
Really,
$$
G(\Delta\zeta(u_1),\ldots,\Delta\zeta(u_{k-1}),\Delta\zeta(u^{\prime}_1),\ldots,\Delta\zeta(u^{\prime}_{k-1}))=
$$
$$
= E\Delta\zeta(u_1)^2\  Var(\Delta\zeta(u_2)|\Delta\zeta(u_1))\ldots
 $$
 $$
 \ldots Var(\Delta\zeta(u^{\prime}_{k-1})|\Delta\zeta(u_1),\ldots,\Delta\zeta(u^{\prime}_{k-1}))\geq
$$
$$
\geq Var(\zeta(u_2)|\zeta(u_1))Var(\zeta(u_3)|\zeta(u_1),\zeta(u_2))\ldots
$$
\begin{equation}
\label{eq4.6}
\ldots Var(\zeta(u^{\prime}_{k})|\zeta(u_1),\ldots,\zeta(u^{\prime}_{k-1})).
\end{equation}
Using  relation \eqref{eq4.5} one can conclude that \eqref{eq4.6} equals
$$
Var(\eta(u_2)|\eta(v_1),\ldots,\eta(v_n),\eta(u_1))\cdot
$$
$$
\cdot Var(\eta(u_3)|\eta(v_1),\ldots,\eta(v_n),\eta(u_1),\eta(u_2))\ldots
$$
$$
\ldots Var(\eta(u^{\prime}_{k})|\eta(v_1),\ldots,\eta(v_n),\eta(u_1),\ldots,\eta(u^{\prime}_{k-1})).
$$
It was shown in the proof of Theorem \ref{thm2.1} that
$$
\int_{D}\ldots\int_{D}\Big(Var\Big(\eta(u_2)|\eta(v_1),\ldots,\eta(v_n),\eta(u_1)\Big)\cdot
$$
$$
\cdot Var\Big(\eta(u_3)|\eta(v_1),\ldots,\eta(v_n),\eta(u_1),\eta(u_2)\Big)\ldots
$$
$$
\ldots Var\Big(\eta(u^{\prime}_{k})|\eta(v_1),\ldots,\eta(v_n),\eta(u_1),\ldots,\eta(u^{\prime}_{k-1})\Big)\Big)^{-1}d\vec{u}<\ c,
$$
where constant $c>0$ does not depend on points
$v_1,\ldots,v_n.$
\end{proof}
Let us apply Theorem \ref{thm4.1} to define a self-intersection local time for the field
$$
x(\eta(u),t),\ u\in D,
$$
where $x(v,t),\ v\in\mbR^2,\ t>0$ is a solution to deterministic equation with interaction \eqref{eq4.1} with the initial measure $\mu$ corresponding to occupation measure of Gaussian random field $\eta(u),\ u\in D.$ For formal description of self-intersection local time for the field $x(\eta(u),t),\ u\in D$ we use the following formula
\begin{equation}
\label{eq4.7}
\cT^{x(\eta,t)}_k=\int_D\ldots\int_D\frac{1}{|\det x^{\prime}(\eta(u_1))|}\prod^{k-1}_{j=1}\delta_0(\eta(u_{j+1})-\eta(u_j))d\vec{u}.
\end{equation}
It was shown in Section 3 that if $x(\cdot,t)$ is deterministic diffeomorphism, then expression \eqref{eq4.7} is well defined. Now $x(\cdot,t)$ is random diffeomorphism which depends on the occupation measure $\mu$ of the random field $\eta.$ To prove the existence of \eqref{eq4.7} in this case we will apply Theorem \ref{thm4.1}. In the beginning let us check the following statement related to approximation of an occupation measure by the sequence of discrete measures.
\begin{lem}
\label{lem4.3} Consider the sequence of random measures
\begin{equation}
\label{eq4.8}
\mu_n=\sum^{n}_{k_1,k_2=0}\frac{1}{n^2}\ \delta_{\eta(\frac{k_1}{n},1+\frac{k_2}{n})},\ n\geq1.
\end{equation}
Then
$$
\gamma(\mu_n,\mu)\to0,\ n\to+\infty, \ a.s.
$$
\end{lem}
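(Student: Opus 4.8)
The plan is to deduce the convergence from the almost sure uniform continuity of $\eta$ on the compact square $D=[0;1]\times[1;2]$, a fact already recorded in Section~2 via Kolmogorov's continuity theorem. First I would fix a continuous modification of $\eta$ together with the event $\Omega_0$ of full probability on which $u\mapsto\eta(u)$ is continuous, hence uniformly continuous, on $D$, and introduce its random modulus of continuity
$$
\omega_\eta(\rho)=\sup\{\|\eta(u)-\eta(u')\|:\ u,u'\in D,\ \|u-u'\|\le\rho\},
$$
so that $\omega_\eta(\rho)\to0$ as $\rho\to0$ for every $\omega\in\Omega_0$. Everything below is carried out pathwise on $\Omega_0$, which is what makes the resulting convergence genuinely almost sure rather than merely in probability.

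Next I would write down an explicit transport plan between $\mu$ and $\mu_n$. Split $D$ into the $n^2$ cells $D_{k_1,k_2}=[\tfrac{k_1}{n},\tfrac{k_1+1}{n}]\times[1+\tfrac{k_2}{n},1+\tfrac{k_2+1}{n}]$, $0\le k_1,k_2\le n-1$, each of area $1/n^2$; since $\mu(A)=\int_D 1_A(\eta(u))\,du=\sum_{k_1,k_2}\int_{D_{k_1,k_2}}1_A(\eta(u))\,du$, the formula
$$
\kappa_n(A\times B)=\sum_{k_1,k_2}\int_{D_{k_1,k_2}}1_A(\eta(u))\,1_B\big(\eta(\tfrac{k_1}{n},1+\tfrac{k_2}{n})\big)\,du,\qquad A,B\in\cB(\mbR^2),
$$
defines a probability measure on $\cB(\mbR^4)$ whose first marginal is $\mu$ and whose second marginal is $\mu_n$ (I ignore the harmless discrepancy between the $(n+1)^2$ sample points in \eqref{eq4.8} and the $n^2$ cells, one may simply read $n-1$ for $n$). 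Hence $\kappa_n\in C(\mu,\mu_n)$, and since $x\mapsto x/(1+x)$ is nondecreasing on $[0;\infty)$ and $\|u-(\tfrac{k_1}{n},1+\tfrac{k_2}{n})\|\le\sqrt2/n$ for $u\in D_{k_1,k_2}$,
$$
\gamma(\mu,\mu_n)\le\int_{\mbR^2}\int_{\mbR^2}\frac{\|x-y\|}{1+\|x-y\|}\,\kappa_n(dx,dy)=\sum_{k_1,k_2}\int_{D_{k_1,k_2}}\frac{\|\eta(u)-\eta(\tfrac{k_1}{n},1+\tfrac{k_2}{n})\|}{1+\|\eta(u)-\eta(\tfrac{k_1}{n},1+\tfrac{k_2}{n})\|}\,du\le\omega_\eta(\sqrt2/n).
$$
On $\Omega_0$ the right-hand side tends to $0$, so $\gamma(\mu_n,\mu)\to0$ almost surely.

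There is no genuine analytic obstacle here; the work lies in the bookkeeping. The points that need care are: checking that $\kappa_n$ really has the claimed marginals (put $B=\mbR^2$ to recover $\mu$, $A=\mbR^2$ to recover $\mu_n$); dealing with the slight mismatch in the number of sample points and the attendant total mass; and noting that $u\mapsto 1_A(\eta(u))$ is measurable, which is immediate from continuity of $\eta$. One could also bypass the coupling entirely: on $\Omega_0$, for each $g\in C_b(\mbR^2)$ the quantity $\int g\,d\mu_n$ is a Riemann sum for $\int_D g(\eta(u))\,du=\int g\,d\mu$, so $\mu_n$ converges weakly to $\mu$ almost surely, and $\gamma$, being the Wasserstein distance associated with the bounded metric $\|u-v\|/(1+\|u-v\|)$, metrizes weak convergence on $\cM$. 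The transport-plan argument is shorter, however, and yields the explicit rate $\gamma(\mu_n,\mu)\le\omega_\eta(\sqrt2/n)$.
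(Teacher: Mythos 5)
Your proposal is correct and follows essentially the same route as the paper: the paper's (very terse) proof is exactly the bound $\gamma(\mu_n,\mu)\le\sum_{k_1,k_2}\int_{D_{k_1,k_2}}\frac{\|\eta(u)-\eta(k_1/n,1+k_2/n)\|}{1+\|\eta(u)-\eta(k_1/n,1+k_2/n)\|}\,du$ obtained from the cell-to-corner coupling, sent to zero by continuity of $\eta$. You have merely made explicit the transport plan and the modulus-of-continuity estimate that the paper leaves implicit.
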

\begin{proof}
Since $\eta$ has a continuous modification on $D,$ then
$$
\gamma_0(\mu_n,\mu)\leq
$$
$$
\leq \sum^{n-1}_{k_1,k_2=0}\frac{\|\eta(\frac{k_1}{n},1+\frac{k_2}{n})-\eta(u)\|}{1+\|\eta(\frac{k_1}{n},1+\frac{k_2}{n})-\eta(u)\|}1_{[\frac{k_1}{n};\frac{k_1+1}{n}]\times[1+\frac{k_2}{n};1+\frac{k_2+1}{n}]}(u)du\to0
$$
when $n\to+\infty,\ a.s.$
\end{proof}
Let $x_n$ be a solution to equation \eqref{eq4.1} with the initial measure $\mu_n$ defined by formula \eqref{eq4.8}. Denote by
$$
I^n_{\ve}=\int_{D}\ldots\int_D\frac{1}{|\det x^{\prime}_{n}(\eta(u_1),t)|^{k-1}}\prod^{k-1}_{j=1}f_{\ve}(\eta(u_{j+1})-\eta(u_{j}))d\vec{u}.
$$
It follows from Theorem \ref{thm4.1} that for every $n\geq 1$ there exists
$$
I^n=L_2-\lim_{\ve\to0}I^n_{\ve}.
$$
The random variable $I^{n}$ can be treated as the self-intersection local time of multiplicity $k$ for the field $x_n(\eta(u),t),\ u\in D.$  The next theorem states that there exists a limit, almost surely, of $I^n$ which will be the self-intersection local time for the field $x(\eta(u),t),\ u\in D$.
\begin{thm}
\label{thm4.2} There exists
$$
I=\lim_{n\to+\infty}I^n,\ a.s.
$$
\end{thm}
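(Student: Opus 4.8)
The plan is to realise each $I^n$ as the integral of an $\omega$-wise continuous bounded weight against the self-intersection measure $\nu_k$ of Lemma~\ref{lem2.6}, to show that these weights converge uniformly on compacta, and then to pass to the limit using that $\nu_k$ is almost surely finite.

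\textbf{Step 1 (deterministic stability of the flow and its Jacobian).} Fix $\omega$; by Lemma~\ref{lem4.3}, $\gamma(\mu_n,\mu)\to0$. The spatial Jacobian $x'_n(v,t)=D_vx_n(v,t)$ satisfies the variational equation $\frac{d}{dt}x'_n(v,t)=D_xa(x_n(v,t),\mu^n_t)x'_n(v,t)$, $x'_n(v,0)=\Id$, with $\mu^n_t=\mu_n\circ x_n(\cdot,t)^{-1}$, so by Liouville's formula and boundedness of $D_xa$ by some constant $M$,
$$
e^{-2Mt}\le|\det x'_n(v,t)|=\exp\Big(\int_0^t\mathrm{tr}\,D_xa(x_n(v,s),\mu^n_s)\,ds\Big)\le e^{2Mt}
$$
for every $v\in\mbR^2$ and every $n$, the same bounds holding for the limiting flow $x$. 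Using the Lipschitz dependence of $a$ and of $D_xa$ on both the spatial and the (Wasserstein) measure variable, one estimates successively, by Gronwall's lemma, $\gamma(\mu^n_t,\mu_t)$ (push a near-optimal coupling of $\mu_n,\mu$ forward by $(x_n(\cdot,t),x(\cdot,t))$), then $\sup_v\|x_n(v,t)-x(v,t)\|$, then $\sup_{v\in K}\|x'_n(v,t)-x'(v,t)\|$ on each compact $K$; all of these tend to $0$ with $\gamma(\mu_n,\mu)$. Hence the weights
$$
\rho_n(v)=\frac{1}{|\det x'_n(v,t)|^{k-1}},\qquad \rho(v)=\frac{1}{|\det x'(v,t)|^{k-1}}
$$
are continuous, bounded by $e^{2M(k-1)t}$ uniformly in $n$, and $\rho_n\to\rho$ uniformly on every compact set, almost surely.

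\textbf{Step 2 (integration against $\nu_k$ and the limit).} The weight $\rho_n$ is $\omega$-wise continuous and bounded and is a measurable function of the finitely many values $\eta(\tfrac{k_1}{n},1+\tfrac{k_2}{n})$, $0\le k_1,k_2\le n$, entering \eqref{eq4.8}. The construction of $\nu_k$ in Lemma~\ref{lem2.6}, which identifies $\cT^\eta_k(g)=\int_{\mbR^2}g\,d\nu_k$ for deterministic continuous $g$, extends to $\rho_n$ by truncation (Remark~\ref{remk2.1}), linearity, and conditioning on these finitely many values, giving
$$
I^n=\int_{\mbR^2}\rho_n(v)\,\nu_k(dv).
$$
In the same way $\int_{\mbR^2}\rho(v)\,\nu_k(dv)$ is well defined; it is the natural meaning of the self-intersection local time $\cT^{x(\eta,t)}_k$ written formally in \eqref{eq4.7}, and we denote it by $I$. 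Since $\nu_k(\mbR^2)<+\infty$ almost surely, for any $\delta>0$ pick $R$ with $\nu_k(\{\|v\|>R\})<\delta$; then
$$
|I^n-I|\le\Big(\sup_{\|v\|\le R}|\rho_n(v)-\rho(v)|\Big)\nu_k(\mbR^2)+2e^{2M(k-1)t}\delta,
$$
and by Step~1 the first summand tends to $0$ as $n\to\infty$. As $\delta$ was arbitrary, $I^n\to I$ almost surely.

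\textbf{Main obstacle.} The crux is Step~1: the continuous dependence of the \emph{Jacobian} $x'(\cdot,t)$ of the flow on the initial occupation measure together with the uniform two-sided bounds on its determinant. This is exactly where the hypotheses on $a$ in \eqref{eq4.1} are used, through the well-posedness theory of \cite{16} and three nested Gronwall estimates; Step~2 is bookkeeping. Should one prefer to avoid $\nu_k$, one may instead show directly that $(I^n)_n$ is Cauchy in $L_2$ — the Gram-determinant computation in the proof of Theorem~\ref{thm4.1}, whose bound is uniform in the grid points, makes the estimates for $E(I^n-I^m)^2$ uniform — and then upgrade the resulting $L_2$-convergence to almost sure convergence again by the uniform-on-compacta convergence $\rho_n\to\rho$ from Step~1.
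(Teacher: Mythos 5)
Your argument is correct in outline, but it takes a genuinely different route from the paper. The paper does not use the self-intersection measure $\nu_k$ at all here: it shows $\{I^n\}$ is Cauchy almost surely by a diagonal-subsequence argument. Since each $I^n$ is a limit in probability of its approximations $I^n_\ve$, one extracts nested subsequences and a diagonal sequence $\{\ve_{mm}\}$ along which $I^n_{\ve_{mm}}\to I^n$ a.s.\ for every $n$ simultaneously (and along which the unweighted approximation $\int_{D^k}\prod_j f_{\ve_{mm}}(\eta(u_{j+1})-\eta(u_j))\,d\vec u$ also converges a.s.); then the global Lipschitz estimate of Lemma \ref{lem4.1}(4), $|\det\partial_u x^{\mu}(u,t)-\det\partial_u x^{\nu}(u,t)|\le c_3\gamma(\mu,\nu)$ uniformly in $u$, combined with the two-sided bounds on the determinant, yields $|I^n_{\ve_{mm}}-I^l_{\ve_{mm}}|\le c\,\gamma(\mu_n,\mu_l)\int_{D^k}\prod_j f_{\ve_{mm}}\,d\vec u$, and Lemma \ref{lem4.3} closes the argument. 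Your Step 1 is essentially a re-derivation of Lemma \ref{lem4.1} (which the paper states by citation to \cite{16}), so the key analytic input is the same in both proofs. Where you genuinely diverge is Step 2: representing $I^n=\int_{\mbR^2}\rho_n\,d\nu_k$ and passing to the limit by uniform convergence of the weights against the a.s.\ finite measure $\nu_k$. This buys you something the paper's Cauchy argument does not deliver directly, namely an explicit identification of the limit $I$ as $\int_{\mbR^2}\rho\,d\nu_k$, i.e.\ as the object \eqref{eq4.7}; the price is that the identity $I^n=\int\rho_n\,d\nu_k$ for a \emph{random} weight $\rho_n$ (measurable with respect to the grid values of $\eta$) is not literally contained in Lemma \ref{lem2.6}, which establishes the representation $\cT^\eta_k(g)=\int g\,d\nu_k$ for deterministic continuous $g$. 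Your appeal to "truncation, linearity, and conditioning" is the right idea, but it is a genuine extra identification step (one must check that the $L_2$-limit of Theorem \ref{thm4.1} agrees a.s.\ with the functional $\Phi$ extended to all bounded continuous functions, simultaneously on one full-measure event, and then condition on the finitely many grid values), and you should be aware that this is where the care is needed rather than in the "bookkeeping". With that step filled in, your proof is a valid and arguably more informative alternative.
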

\begin{proof} To prove the theorem let us check that $\{I^n,\ n\geq1\}$ is Cauchy sequence. Since there exists
$$
I^{1}=P-\lim_{\ve\to0}I^{1}_\ve,
$$
then it follows from Riesz lemma that there exist subsequence $\{\ve_{1m},\ m\geq1\}$ such that
$I^{1}_{\ve_{1m}}\to I^{1}$ as $m\to+\infty,\ a.s.$ From existence of
$$
I^{2}=P-\lim_{m\to+\infty}I^{2}_{\ve_{1m}}
$$
one can conclude the existence of a subsequence
$\{\ve_{2m},\ m\geq1\}$ which is subsequence of $\{\ve_{1m},\ m\geq1\}$ such that
$I^{1}_{\ve_{2m}}\to I^{1}$ and
$I^{2}_{\ve_{2m}}\to I^{2}$ as $m\to+\infty,\ a.s.$ Repeating this procedure one can obtain  nested system of subsequences $\Big\{\{\ve_{lm},\ m\geq1\},\ l\geq1\Big\}$ such that
$I^{1}_{\ve_{lm}}\to I^{1},\ I^{2}_{\ve_{lm}}\to I^{2},\ldots,I^{l}_{\ve_{lm}}\to I^{l}$
as $m\to+\infty,\ a.s.$ Taking $\{\ve_{mm},\ m\geq1\}$ we get for any $n,l\geq 1$
$$
|I^{n}-I^{l}|=\lim_{m\to+\infty}|I^{n}_{\ve_{mm}}-I^{l}_{\ve_{mm}}|,\ a.s.
$$
Also we can suppose that there exists
$$
\lim_{m\to+\infty}\int_{D^k}\prod^{k-1}_{j=1}f_{\ve_{mm}}(\eta(u_{j+1})-\eta(u_{j}))d\vec{u},\ a.s.
$$
To proceed further we need the following statement which can be proved using arguments of \cite{16}.
\begin{lem}
\label{lem4.1} Let $x^{\mu},\ x^{\nu}$ be solution to \eqref{eq4.1} corresponding to initial measures $\mu$ and $\nu.$ Under mentioned conditions  the following relation holds
\begin{enumerate}
\item
There exists constants $c_1>0$ such that for any $u\in\mbR^2$ and $t\in[0;1]$
$$
\|x^{\mu}(u,t)-x^{\nu}(u,t)\|\leq c_1\gamma(\mu,\nu)
$$
\item
There exists the derivative of $x$ with respect to spatial variable $x^{\prime}_u(u,t)\in C(\mbR^d\times[0;1],\mbR^{d\times d})$ and
$$
\det x^{\prime}_u(u,t)=e^{\int^t_0 tr\ a^{\prime}(x(u,s),\mu_s)ds}
$$
\item
There exists a constant $c_2>0$ such that for any $u\in\mbR^2$ and $t\in[0;1]$
$$
|tr\ a^{\prime}(x^\mu(u,t),\mu_t)-tr\ a^{\prime}(x^\nu(u,t),\nu_t)|\leq c_2\gamma(\mu,\nu)
$$
\item
There exists a constant $c_3>0$ such that for any $u\in\mbR^2$ and $t\in[0;1]$
$$
\Big|det\ \frac{\partial}{\partial u}x^{\mu}(u,t)-det\ \frac{\partial}{\partial u}x^{\nu}(u,t)\Big|\leq c_3\gamma(\mu,\nu).
$$
\end{enumerate}
\end{lem}
Lemma \ref{lem4.1} describes the dependence of the flow with interaction on initial mass distribution.
It follows from Lemma \ref{lem4.1} that
$$
|I^{n}_{\ve_{mm}}-I^{l}_{\ve_{mm}}|\leq
$$
$$
\leq c\ \gamma(\mu_{n},\mu_{l})\int_{D^k}\prod^{k-1}_{j=1}f_{\ve_{mm}}(\eta(u_{j+1})-\eta(u_{j}))d\vec{u},\ c>0.
$$
Lemma \ref{lem4.3} implies that
$$
\gamma(\mu_{n},\mu_{l})\to0,
$$
as $n,\ l\to+\infty, \ a.s.$
Consequently
$$
\lim_{n,\ l\to+\infty}\lim_{m\to+\infty}|I^{n}_{\ve_{mm}}-I^{l}_{\ve_{mm}}|=0
$$
which proves the theorem.
\end{proof}
\section{Asymptotics of self-intersection local for Gaussian field in stochastic flow}
In Section 4 we defined self intersection local $\cT^{x(\eta),t}_k$ for the Gaussian field $\eta(u),\ u\in D$ evolving in a flow of interacting particles $x(v,t),\ v\in\mbR^2,\ t\geq0.$ In present Section we study asymptotic behavior   of random variable $\cT^{x(\eta),t}_k$ as $t\to+\infty.$

Firstly, consider the field in isotropic Brownian flow on $\mbR^2$ \cite{18}.
Consider a function $\varphi\in C^{\infty}(\mbR^2)$ such that
$$
\varphi(u)=\varphi(\|u\|),\ u\in\mbR^2,\ supp\ \varphi\subset B(0;1),i.e.
$$
$$
\varphi(u)=0,\ \|u\|>1,
$$
and
$$
\int_{\mbR^2}\varphi^2(u)du=1.
$$
For $\ve>0$ put
$$
\varphi_{\ve}(u)=\frac{1}{\ve}\varphi\Big(\frac{u}{\ve}\Big).
$$
Consider the following Cauchy problem
\begin{equation}
\label{eq5.1}
\begin{cases}
dx_{\ve}(u,t)=\int_{\mbR^2}\varphi_{\ve}(x_{\ve}(u,t)-p)W(dp,dt)\\	
x_{\ve}(u,0)=u,\ u\in\mbR^2,
\end{cases}
\end{equation}
where $W$ is  two-dimensional Wiener sheet on $\mbR^2\times [0;+\infty).$ The following statements describe some properties of solution to \eqref{eq5.1}.
\begin{lem}
\label{lem5.1} There exists a unique solution to Cauchy problem \eqref{eq5.1} which is a flow of diffeomorphisms. Moreover
$x_{\ve}$ defines an isotropic Brownian flow on $\mbR^2.$
\end{lem}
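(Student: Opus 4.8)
The plan is to view \eqref{eq5.1} as a degenerate case of the equation with interaction \eqref{eq1.1} --- one in which the coefficients do not depend on the measure variable --- so that existence, uniqueness and the diffeomorphism property follow from the facts quoted after \eqref{eq1.1}, and then to compute the local characteristics of the driving noise and check that they are rotation invariant.

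First I would rewrite \eqref{eq5.1} as an $\mbR^2$-valued stochastic differential equation driven by the $L_2(\mbR^2)$-valued martingale measure $W$, with coefficient $b_\ve:\mbR^2\to L_2(\mbR^2)$, $b_\ve(x)=\varphi_\ve(x-\cdot)$, acting diagonally on the two coordinates. Since $\varphi_\ve\in C^{\infty}_0(\mbR^2)$, this coefficient is bounded, $\|b_\ve(x)\|^2_{L_2}=\|\varphi_\ve\|^2_{L_2}=1$, and globally Lipschitz: from $\varphi_\ve(x-p)-\varphi_\ve(y-p)=\int^1_0\nabla\varphi_\ve(y-p+\theta(x-y))\cdot(x-y)\,d\theta$ and Minkowski's integral inequality one obtains $\|b_\ve(x)-b_\ve(y)\|_{L_2}\le\|\nabla\varphi_\ve\|_{L_2}\,\|x-y\|$. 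Hence the standard existence and uniqueness theorem for equations of the form \eqref{eq1.1} --- applied here with $a\equiv0$ and $b(x,\mu,p)=\varphi_\ve(x-p)$ independent of $\mu$ --- yields a unique strong solution $x_\ve(u,t)$ possessing a modification jointly continuous in $(u,t)$.

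For the diffeomorphism property I would note that $x\mapsto\varphi_\ve(x-\cdot)$ is smooth as an $L_2(\mbR^2)$-valued map with all derivatives bounded, so in particular the smoothness hypotheses under which the solution of \eqref{eq1.1} is a diffeomorphism for every fixed $t$ (\cite{16}, p.~66) are met; therefore almost surely $u\mapsto x_\ve(u,t)$ is a $C^{\infty}$-diffeomorphism of $\mbR^2$. Moreover, the equation is autonomous and $W$ has stationary independent increments in $t$, so pathwise uniqueness yields the cocycle identity $x_\ve(\cdot,r,t)=x_\ve(\cdot,s,t)\circ x_\ve(\cdot,r,s)$ for $r\le s\le t$ together with the independence and stationarity of the flow increments over disjoint time intervals; that is, $x_\ve$ is a Brownian flow of diffeomorphisms.

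It remains to check isotropy. The driving Brownian vector field $V(x,t)=\int_{\mbR^2}\varphi_\ve(x-p)W(dp,dt)$ has zero mean and, for $i,j\in\{1,2\}$,
$$
\Cov\big(V_i(x,t),V_j(y,t)\big)=t\,\delta_{ij}\int_{\mbR^2}\varphi_\ve(x-p)\varphi_\ve(y-p)\,dp=t\,\delta_{ij}\,b_\ve(x-y),
$$
where $b_\ve(z)=\int_{\mbR^2}\varphi_\ve(w)\varphi_\ve(w-z)\,dw$ is the autocorrelation function of $\varphi_\ve$; it lies in $C^{\infty}_0(\mbR^2)$, satisfies $b_\ve(0)=\|\varphi_\ve\|^2_{L_2}=1$, and is radial because $\varphi_\ve$ is radial (equivalently $\widehat{b_\ve}=|\widehat{\varphi_\ve}|^2$ is radial and nonnegative, so $b_\ve$ is a genuine covariance function). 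Hence the local characteristics of $x_\ve$ are the zero drift and the covariance density $b_{ij}(z)=\delta_{ij}b_\ve(\|z\|)$, which is translation invariant and, for every $R\in O(2)$, satisfies $b_{ij}(Rz)=\delta_{ij}b_\ve(\|z\|)=\sum_{k,l}R_{ik}R_{jl}\,b_{kl}(z)$ since $\sum_k R_{ik}R_{jk}=\delta_{ij}$. By the characterization of isotropic Brownian flows \cite{18}, a Brownian flow whose local characteristics are spatially homogeneous and $O(2)$-equivariant is an isotropic Brownian flow, which finishes the proof. No deep difficulty is involved: the whole argument verifies that \eqref{eq5.1} meets the hypotheses of the cited theorems, and the only points calling for care are the passage from the martingale-measure formulation to the functional-analytic one --- reduced above to boundedness, the Lipschitz bound and smoothness of $x\mapsto\varphi_\ve(x-\cdot)$ in $L_2(\mbR^2)$ --- and the explicit identification and $O(2)$-equivariance of the covariance density $b_{ij}$.
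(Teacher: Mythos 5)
Your proof is correct and follows the route the authors evidently intend: the paper dismisses this lemma with the single line ``the proof easily follows from properties of the function $\varphi$,'' and your argument supplies exactly the omitted verifications (the $L_2$-Lipschitz bound $\|\varphi_\ve(x-\cdot)-\varphi_\ve(y-\cdot)\|_{L_2}\le\|\nabla\varphi_\ve\|_{L_2}\|x-y\|$, the smoothness needed for the diffeomorphism property, and the identification of the covariance density $\delta_{ij}\,\varphi_\ve\ast\varphi_\ve(x-y)$ with its $O(2)$-equivariance). All the computations check out, including $\|\varphi_\ve\|_{L_2}=1$ and the radiality of the autocorrelation, so nothing further is needed.
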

The proof of lemma easily follows from properties of function $\varphi.$
\begin{lem}
\label{lem5.3}
Let $x_{\ve}(u,t),\ u\in\mbR^2,\ t\in[0;1]$ be a solution to Cauchy problem \eqref{eq5.1}. Then for $u_1\neq u_2$ random process
$\Big(x_{\ve}(u_1,\cdot),x_{\ve}(u_2,\cdot)\Big)$
converges weakly in $C([0;1],\mbR^4)$ to a process $\Big(u_1+w_1(\cdot),u_2+w_2(\cdot)\Big)$
as $\ve\to0.$ Here $w_1, w_1$  are two independent planar Wiener processes.
\end{lem}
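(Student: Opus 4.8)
The plan is first to establish tightness of the laws of $\big(x_\ve(u_1,\cdot),x_\ve(u_2,\cdot)\big)$ in $C([0;1],\mbR^4)$, and then to identify every subsequential weak limit as the law of two independent planar Wiener processes started at $u_1$ and $u_2$. Tightness is immediate: each $x_\ve(u_i,\cdot)$ is a continuous martingale whose two coordinates have bracket $t$ (because $\int_{\mbR^2}\varphi_\ve(p)^2\,dp=1$) and vanishing mutual bracket, so by L\'evy's theorem $x_\ve(u_i,\cdot)$ is, for every $\ve$, a standard planar Wiener process issued from $u_i$; since the two marginal laws do not depend on $\ve$, the family of joint laws has (trivially) tight marginals, hence is tight. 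It remains only to pin down the limit.

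Put $\Delta_\ve(t)=x_\ve(u_1,t)-x_\ve(u_2,t)$ and let $B_\ve(z)=\int_{\mbR^2}\varphi_\ve(q)\varphi_\ve(q+z)\,dq$ be the autocorrelation of $\varphi_\ve$; then
\[
d\lg x_\ve(u_1,\cdot)^j,x_\ve(u_2,\cdot)^l\rg_t=\delta_{jl}B_\ve(\Delta_\ve(t))\,dt,\qquad d\lg \Delta_\ve^j\rg_t=2\big(1-B_\ve(\Delta_\ve(t))\big)\,dt,
\]
$\lg\Delta_\ve^1,\Delta_\ve^2\rg\equiv0$, while $\mathrm{supp}\,B_\ve\subset\ov{B(0;2\ve)}$ and $|B_\ve|\le B_\ve(0)=1$. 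The crux, and the only genuinely delicate step, is the occupation-time estimate
\[
E\int_0^1 B_\ve(\Delta_\ve(s))\,ds\longrightarrow0,\qquad \ve\to0.
\]
A direct transition-density estimate is unpleasant here because the diffusion coefficient $2(1-B_\ve)$ of $\Delta_\ve$ degenerates at the origin (as $B_\ve(0)=1$), so the relevant occupation-time integrals carry a non-integrable singularity there. I would circumvent this as follows. Set $\sigma_\ve=\inf\{t:\|\Delta_\ve(t)\|\le2\ve\}$. For $t<\sigma_\ve$ one has $B_\ve(\Delta_\ve(t))=0$, hence $\Delta_\ve(\cdot\wedge\sigma_\ve)$ is a continuous martingale with bracket $2(\cdot\wedge\sigma_\ve)I_2$ and orthogonal coordinates; so up to time $\sigma_\ve$ it agrees with $u_1-u_2$ plus $\sqrt2$ times a standard planar Wiener process. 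Therefore $P(\sigma_\ve\le1)$ equals the probability that such a process reaches $\ov{B(0;2\ve)}$ before time $1$, which tends to $0$ as $\ve\to0$ because a two-dimensional Wiener process avoids a shrinking disc on a bounded time interval with probability tending to one (compare hitting $\ov{B(0;2\ve)}$ with first leaving a large ball $B(0;R)$: the former has probability $\tfrac{\ln(R/\|u_1-u_2\|)}{\ln(R/2\ve)}\to0$ as $\ve\to0$ for fixed $R$, and the latter is arbitrarily unlikely for $R$ large; let $\ve\to0$, then $R\to\infty$). On $\{\sigma_\ve>1\}$ the integrand vanishes on all of $[0;1]$, so $\int_0^1 B_\ve(\Delta_\ve(s))\,ds\le\1_{\{\sigma_\ve\le1\}}$, and taking expectations yields the estimate.

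Granting this, I would finish by a martingale argument. By It\^o's formula, for each $j,l\in\{1,2\}$ the process $N^{jl}_\ve(t):=x_\ve(u_1,t)^j x_\ve(u_2,t)^l-u_1^j u_2^l-\delta_{jl}\int_0^t B_\ve(\Delta_\ve(s))\,ds$ is a martingale. Let $(\xi_1,\xi_2)$ be a subsequential weak limit, realised almost surely and uniformly on $[0;1]$ by Skorokhod's representation. The variables $x_\ve(u_i,t)^j$ are Gaussian with law independent of $\ve$, so the products $x_\ve(u_1,t)^j x_\ve(u_2,t)^l$ are uniformly integrable; together with the occupation-time estimate this lets one pass to the limit in $E\big[\Phi\,(N^{jl}_\ve(t)-N^{jl}_\ve(s))\big]=0$ (for $s<t$ and bounded continuous $\Phi$ of the path on $[0;s]$), which gives that $\xi_1^j(t)\xi_2^l(t)-u_1^j u_2^l$ is a martingale, i.e. $\lg\xi_1^j,\xi_2^l\rg\equiv0$. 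Since each $\xi_i$ is a planar Wiener process from $u_i$ (it is the limiting marginal), the continuous $\mbR^4$-valued martingale $(\xi_1,\xi_2)$ has bracket $t\,I_4$, so L\'evy's theorem identifies it as a standard $\mbR^4$-valued Wiener process from $(u_1,u_2)$; in particular $\xi_1$ and $\xi_2$ are independent planar Wiener processes. The limit being unique, the whole family converges, which is the assertion with $w_i(\cdot)=\xi_i(\cdot)-u_i$. Equivalently, one may observe that $x_\ve(u_1,\cdot)\pm x_\ve(u_2,\cdot)$ have vanishing mutual bracket and brackets $2(1\pm B_\ve(\Delta_\ve))I_2\,dt$, so that by the martingale central limit theorem and the occupation-time estimate the pair $\big(x_\ve(u_1,\cdot)+x_\ve(u_2,\cdot),\,x_\ve(u_1,\cdot)-x_\ve(u_2,\cdot)\big)$ converges to $\sqrt2$ times a standard $\mbR^4$ Wiener process, after which one inverts the linear change of variables.
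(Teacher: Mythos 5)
Your proposal is correct and follows essentially the same route as the paper: both arguments rest on the stopping time at which the two particles first come within distance $2\ve$ (before which the joint bracket vanishes because $\mathrm{supp}\,\varphi_\ve\ast\varphi_\ve\subset \ov{B(0;2\ve)}$), on the observation that up to that time the difference is $u_1-u_2$ plus $\sqrt2$ times a planar Wiener process, and on the fact that such a process avoids shrinking discs on bounded time intervals, after which the multidimensional L\'evy theorem identifies the limit. The only differences are cosmetic: you quantify the hitting probability via the annulus formula where the paper invokes its Lemma 5.4 (monotone limits of hitting times plus non-polarity of points), and you pass to the limit through the martingale problem with a uniform-integrability remark where the paper asserts the convergence of the joint characteristic directly.
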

\begin{proof} For fixed $u_1,u_2$ processes $x_{\ve}(u_1,t), x_{\ve}(u_2,t),\ t\in[0;1]$ are planar Wiener processes. Consequently the family
$$
\Big\{\Big(x_{\ve}(u_1,\cdot),x_{\ve}(u_2,\cdot)\Big),\ \ve>0\Big\}
$$
is relatively compact in $C([0;1],\mbR^4).$
Denote by
$$
\Big(x(u_1,\cdot),x(u_2,\cdot)\Big)
$$
some limit point as $\ve\to0$. To finish the proof we need
\begin{lem}
\label{lem5.4} Let $w(t)$ be planar Wiener process starting from the point $(1,0).$ Set
$$
\tau^w_{\ve}=\inf\{t>0:\ \|w(t)\|\leq\ve\}.
$$
Then, almost surely,
$$
\tau^w_{\ve}\to+\infty,\ \ve\to0.
$$
\end{lem}
\begin{proof}
Since for $\ve_2<\ve_1$
$$
\tau^w_{\ve_1}<\tau^w_{\ve_2},
$$
then, almost surely, exists
$$
\tau^w_{0}=\lim_{\ve\to0}\tau^w_{\ve}
$$
finite or infinite. Consider such $\omega$ that
$$
\tau^w_{0}(\omega)<+\infty.
$$
Due to the continuity of $w$
$$
\lim_{\ve\to0}w(\tau^w_{\ve}(\omega))=w(\tau^w_{0}(\omega)).
$$
Consequently
$$
\|w(\tau^w_{0}(\omega))\|=0.
$$
Since
$$
P\{\exists\  t>0:\ \|w(t)\|=0\}=0,
$$
then
$$
P\{\tau_0<+\infty\}=0.
$$
\end{proof}
Let us return to the proof of Lemma \ref{lem5.3}. Denote by
$$
\tau_{\ve}=\inf\{t>0:\|x_{\ve}(u_2,t)-x_{\ve}(u_1,t)\|\geq 2\ve\}.
$$
Consider the process
$$
y(t)=
\begin{cases}
x_{\ve}(u_1,t),\ t\leq\tau_{\ve}\\	
x_{\ve}(u_1,\tau_{\ve})+\tilde{w}(t)-\tilde{w}(\tau_{\ve}),\ t>\tau_{\ve},
\end{cases}
$$
where $\tilde{w}$ is planar Wiener such that $\tilde{w}$ and $x_{\ve}(u_i,\cdot),\ i=1,2$ are independent.
Note that processes $y(\cdot)$ and $x_{\ve}(u_2,\cdot)$ are martingales with respect to filtration
$\cF_t=\sigma\Big(y(r),x_{\ve}(u_2,r),\ r\leq t\Big).$
One can check that processes $x_{\ve}(u_2,\cdot)$ and $y(\cdot)$ are two independent Wiener processes. Let us construct another
planar Wiener process
$$
z(t)=\frac{u_2-u_1}{\sqrt{2}}+\frac{1}{\sqrt{2}}\Big(y(t)-x_\ve(u_2,t)\Big),\ t\geq0.
$$
Note that
$$
\tau^z_{\frac{2\ve}{\sqrt{2}}}=^d\tau_{\ve}.
$$
It follows from Lemma \ref{lem5.4} that for any $t>0$
\begin{equation}
\label{eq5.2}
P\{\tau_{\ve}<t\}\to0,\ \ve\to0.
\end{equation}
Since $supp\  \varphi_{\ve}\ast\varphi_{\ve}\subset B(0,2\ve)$, then using \eqref{eq5.2} one can check that the joint characteristics
$$
\langle x_{\ve}(u_1,\cdot),x_{\ve}(u_2,\cdot)\rangle (t)=
$$
$$
=\int^t_0\varphi_{\ve}\ast\varphi_{\ve}\Big(\|x_{\ve}(u_2,s)-x_{\ve}(u_2,s)\|\Big)ds\to0,\ \ve\to0.
$$
Consequently the  process $(x(u_1,\cdot),(x(u_2,\cdot)))$ is continuous square integrable martingale with
$$
d\langle x(u_i,\cdot),(x(u_j,\cdot)\rangle(t)=\delta_{ij}dt,
$$
where
$$
\delta_{ij}=
\begin{cases}
1,\ i=j\\
0,\ i\neq j.
\end{cases}
$$
Now the statement of Lemma \ref{lem5.3} follows from multidimensional analog of Levy's theorem \cite{22} (p. 352, Th. 18.3).
\end{proof}
Denote by $x^{\prime}_{\ve}(u,t)$ the derivative with respect to spatial variable. Then it can be checked that the following statement holds.
\begin{lem}
\label{lem5.2}
$$
det\  x^{\prime}_{\ve}(u,t)=exp\Big\{\frac{1}{\ve^2}\int^t_0\int_{\mbR^2}(\nabla \varphi(\frac{x(u,t)-p}{\ve}),W(dp,ds))-\frac{ct}{2\ve^2}\Big\},
$$
where
$$
c=\int_{\mbR^2}\|\nabla\varphi(q)\|^2dq.
$$
\end{lem}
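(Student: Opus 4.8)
The plan is to linearize the flow \eqref{eq5.1} in the starting point, obtain a closed linear stochastic equation for the Jacobian matrix $J(t):=x'_{\ve}(u,t)$, and then apply It\^o's formula to $\det J(t)$, exploiting two structural facts: $\det$ is an affine function of each row of a matrix, and the components $W^1,W^2$ of $W$ are independent.

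First I would write down the variational equation. Put $x_{\ve}=(x^1_{\ve},x^2_{\ve})$, $W=(W^1,W^2)$ and $J_{jl}(t)=\partial x^j_{\ve}(u,t)/\partial u_l$; differentiating the integral form of \eqref{eq5.1} in $u_l$ (the $C^1$ dependence of the flow on the initial data being guaranteed, exactly as in Lemma \ref{lem5.1}, by the smoothness and compact support of $\varphi$) yields
$$
dJ_{jl}(t)=\sum_{m=1}^{2}J_{ml}(t)\,dN^{jm}(t),\qquad J_{jl}(0)=\delta_{jl},
$$
where $N^{jm}(t)=\int_0^t\int_{\mbR^2}\partial_m\varphi_{\ve}(x_{\ve}(u,s)-p)\,W^j(dp,ds)$. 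Each $N^{jm}$ is a continuous square-integrable martingale, and since $W^1,W^2$ are independent and $\int_{\mbR^2}\partial_m\varphi_{\ve}(z-p)\,\partial_{m'}\varphi_{\ve}(z-p)\,dp=\int_{\mbR^2}\partial_m\varphi_{\ve}(q)\,\partial_{m'}\varphi_{\ve}(q)\,dq$ does not depend on $z$, one has $d\langle N^{jm},N^{j'm'}\rangle(t)=\delta_{jj'}\big(\int_{\mbR^2}\partial_m\varphi_{\ve}(q)\,\partial_{m'}\varphi_{\ve}(q)\,dq\big)\,dt$.

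Next I would apply It\^o's formula to $F(J)=\det J$. Using $\partial F/\partial J_{jl}=\det J\cdot(J^{-1})_{lj}$ together with $\sum_l J_{ml}(J^{-1})_{lj}=\delta_{mj}$, the first-order term reduces to $\det J(t)\,dN(t)$, where
$$
N(t):=\sum_{j=1}^{2}N^{jj}(t)=\int_0^t\int_{\mbR^2}\big(\nabla\varphi_{\ve}(x_{\ve}(u,s)-p),\,W(dp,ds)\big).
$$
The second-order (It\^o) term vanishes identically: because $W^1,W^2$ are independent, only the covariations $\langle N^{jm},N^{jm'}\rangle$ with equal first index contribute, and these are weighted by $\partial^2 F/\partial J_{jm}\partial J_{jm'}$, which is zero since $\det J$ is an affine function of the row $(J_{j1},J_{j2})$ of $J$. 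Hence $d\det J(t)=\det J(t)\,dN(t)$ with $\det J(0)=1$.

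Finally, $N$ is a continuous local martingale with $d\langle N\rangle(t)=\sum_{j}\big(\int_{\mbR^2}(\partial_j\varphi_{\ve}(q))^2\,dq\big)dt=\big(\int_{\mbR^2}\|\nabla\varphi_{\ve}(q)\|^2\,dq\big)dt$, and the scaling $\varphi_{\ve}(z)=\ve^{-1}\varphi(z/\ve)$ gives $\int_{\mbR^2}\|\nabla\varphi_{\ve}(q)\|^2\,dq=c\,\ve^{-2}$ with $c=\int_{\mbR^2}\|\nabla\varphi(q)\|^2\,dq$, as well as $N(t)=\ve^{-2}\int_0^t\int_{\mbR^2}\big(\nabla\varphi(\ve^{-1}(x_{\ve}(u,s)-p)),W(dp,ds)\big)$. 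By uniqueness for the linear equation $dX=X\,dN$ (the Dol\'eans--Dade exponential) this forces $\det J(t)=\exp\{N(t)-\tfrac12\langle N\rangle(t)\}$, which is precisely the asserted formula. The one point that needs care is the justification of the variational equation and of interchanging the spatial derivative with the stochastic integral; this is standard for a stochastic flow with a smooth compactly supported kernel. After that the proof is a short computation whose crux is the multilinearity of the determinant, which kills the It\^o correction.
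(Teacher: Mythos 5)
Your proof is correct, and it reaches the stated formula by a route that is cleaner than the paper's. The paper also starts from the variational equation, but it first expands $\varphi_{\ve}(x_{\ve}(u,t)-\cdot)$ in an orthonormal basis of $L_2(\mbR^2)$ to rewrite \eqref{eq5.1} as a system driven by countably many independent Brownian motions, and then writes a variational system \eqref{eq5.5} in which the equation for $\partial x^j_{\ve}/\partial u_l$ contains only the term $\frac{\partial a_k}{\partial x_j}\frac{\partial x_j}{\partial u_l}$; as a consequence the off-diagonal Jacobian entries stay identically zero, each diagonal entry is solved explicitly as a scalar stochastic exponential, and the determinant is obtained as their product. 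Your version keeps the full chain rule $dJ_{jl}=\sum_m J_{ml}\,dN^{jm}$ (the paper's \eqref{eq5.5} omits the cross terms $\frac{\partial a_k}{\partial x_m}\frac{\partial x^m}{\partial u_l}$ with $m\neq j$, which is why its Jacobian comes out diagonal --- not a property one should expect of the actual derivative flow) and instead kills everything except the trace part at the level of $\det J$: the first-order It\^o term collapses to $\det J\,\bigl(dN^{11}+dN^{22}\bigr)$ by the cofactor identity, and the second-order term vanishes because the determinant is affine in each row while the independence of $W^1,W^2$ confines the surviving covariations to a single row. This buys you a derivation of the determinant formula that does not depend on the (questionable) diagonality of $J$, at the modest cost of having to invoke multilinearity of $\det$ and uniqueness of the Dol\'eans--Dade exponential; the final identity, including $\langle N\rangle(t)=ct/\ve^2$ via the scaling $\nabla\varphi_{\ve}(z)=\ve^{-2}\nabla\varphi(z/\ve)$, agrees with the paper's.
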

\begin{proof}
 Note that \eqref{eq5.1} can be represented as follows
\begin{equation}
\label{eq5.3}
\begin{cases}
dx^1_{\ve}(u,t)=\int_{\mbR^2}\varphi_{\ve}(x_{\ve}(u,t)-p)W_1(dp,dt)\\
dx^2_{\ve}(u,t)=\int_{\mbR^2}\varphi_{\ve}(x_{\ve}(u,t)-p)W_2(dp,dt)\\	
x^1_{\ve}(u,0)=u_1,\ x^2_{\ve}(u,0)=u_2,\ u_1,u_2\in\mbR.
\end{cases}
\end{equation}
Let $\{e_k,\ k\geq1\}$ be an orthonormal basis in $L_2(\mbR^2).$ Then
$$
\varphi_{\ve}(x_{\ve}(u,t)-p)=\sum^{\infty}_{k=1}(\varphi_{\ve}(x_{\ve}(u,t)-\cdot),e_k)e_k(p).
$$
Denote by
$$
\beta^i_k(t)=\int^t_0\int_{\mbR^2}e_k(p)W_i(dp,ds),\ i=1,2.
$$
Note that $\{\beta^i_k(t),\ t\geq0\}$ are independent Brownian motions in $\mbR.$
Set
$$
a_k(x_{\ve}(u,t))=(\varphi_{\ve}(x_{\ve}(u,t)-\cdot),e_k).
$$
Then \eqref{eq5.3} can be rewritten as
\begin{equation}
\label{eq5.4}
\begin{cases}
dx^1_{\ve}(u,t)=\sum^{\infty}_{k=1}a_k(x_{\ve}(u,t))d\beta^1_k(t)\\
dx^2_{\ve}(u,t)=\sum^{\infty}_{k=1}a_k(x_{\ve}(u,t))d\beta^2_k(t)\\	
x^1_{\ve}(u,0)=u_1,\
x^2_{\ve}(u,0)=u_2,\ u_1,u_2\in\mbR.
\end{cases}
\end{equation}
It follows from \eqref{eq5.4} that $x^{\prime}_{\ve}(u,t),\ u\in\mbR^2,\ t\geq0$ satisfies the following stochastic differential equation
\begin{equation}
\label{eq5.5}
\begin{cases}
d\ \frac{\partial x^1_{\ve}(u,t)}{\partial u_1}=\frac{1}{\ve}\sum^{\infty}_{k=1}\frac{\partial {a_k(x_{\ve}(u,t))}}{{\partial x_1}}\frac{\partial x_1(u,t)}{\partial u_1}d\beta^1_k(t)\\
d\ \frac{\partial x^1_{\ve}(u,t)}{\partial u_2}=\frac{1}{\ve}\sum^{\infty}_{k=1}\frac{\partial {a_k(x_{\ve}(u,t))}}{{\partial x_1}}\frac{\partial x_1(u,t)}{\partial u_2}d\beta^1_k(t)\\
d\ \frac{ \partial x^2_{\ve}(u,t)}{\partial u_1}=\frac{1}{\ve}\sum^{\infty}_{k=1}\frac{\partial a_k(x_{\ve}(u,t))}{\partial x_2}\frac{\partial x_2}{\partial u_1}d\beta^2_k(t)\\	
d\ \frac{ \partial x^2_{\ve}(u,t)}{\partial u_2}=\frac{1}{\ve}\sum^{\infty}_{k=1}\frac{\partial a_k(x_{\ve}(u,t))}{\partial x_2}\frac{\partial x_2}{\partial u_2}d\beta^2_k(t)\\	
\frac{\partial x^1_{\ve}(u,0)}{\partial u_1}=1\\
\frac{\partial x^1_{\ve}(u,0)}{\partial u_2}=0\\
\frac{\partial x^2_{\ve}(u,0)}{\partial u_1}=0\\
\frac{\partial x^2_{\ve}(u,0)}{\partial u_2}=1.
\end{cases}
\end{equation}
Put
$$
\alpha^i_k=\frac{\partial {a_k(x_{\ve}(u,t))}}{\partial x_i}.
$$
Applying Ito formula one can check that solution to \eqref{eq5.5} has the following representation
\begin{equation}
\label{eq5.6}
\begin{cases}
\frac{\partial x^1_{\ve}(u,t)}{\partial u_1}=exp\Big\{\frac{1}{\ve}\int^t_0\sum^{\infty}_{k=1}\alpha^1_k(s)d\beta^1_k(s)-\frac{1}{2\ve^2}\int^t_0\sum^{\infty}_{k=1}(\alpha^1_k(s))^2ds\Big\}\\
\frac{\partial x^1_{\ve}(u,t)}{\partial u_2}=0\\
\frac{\partial x^2_{\ve}(u,t)}{\partial u_1}=0\\
\frac{\partial x^2_{\ve}(u,t)}{\partial u_2}=exp\Big\{\frac{1}{\ve}\int^t_0\sum^{\infty}_{k=1}\alpha^2_k(s)d\beta^2_k(s)-\frac{1}{2\ve^2}\int^t_0\sum^{\infty}_{k=1}(\alpha^2_k(s))^2ds\Big\}.
\end{cases}
\end{equation}
It follows from \eqref{eq5.6} that
$$
det\ x^{\prime}(u,t)=exp\Big\{\sum^{\infty}_{k=1}\frac{1}{\ve}\int^t_0\alpha^1_k(s)d\beta^1_k(s)+
$$
 \begin{equation}
\label{eq5.7}
+\frac{1}{\ve}\int^t_0\alpha^2_k(s)d\beta^2_k(s)-\frac{1}{2\ve^2}\sum^{\infty}_{k=1}\int^t_0((\alpha^1_k(s))^2+(\alpha^2_k(s))^2)ds\Big\}.
 \end{equation}
Using \eqref{eq5.7} one can conclude that
$$
det\  x^{\prime}_{\ve}(u,t)=\exp\Big\{\frac{1}{\ve}\int^t_0\int_{\mbR^2}(\nabla \varphi_{\ve}(x_\ve(u,s)-p),W(dp,ds))-
$$
$$
-\frac{1}{2\ve^2}\int^t_0\int_{\mbR^2}\|\nabla\varphi_{\ve}(x_\ve(u,s)-p)\|^2dpds\Big\}=
$$
$$
=\exp\Big\{\frac{1}{\ve^2}\int^t_0\int_{\mbR^2}(\nabla \varphi(\frac{x_\ve(u,s)-p}{\ve}),W(dp,ds))-
$$
$$
-\frac{1}{2\ve^4}\int^t_0\int_{\mbR^2}\|\nabla\varphi(\frac{x_\ve(u,s)-p}{\ve})\|^2dpds\Big\}.
$$
A change of variable
$$
\frac{x_\ve(u,t)-p}{\ve}=q
$$
completes the proof.
\end{proof}
Note that $\varphi_{\ve}(u)=0,$ when $\|u\|>\ve.$ Follow to \cite{21} a parameter $\ve$ can be treated as a radius of interaction of the particles $x(v,t)$ with the molecules of random media $W$. The speed of molecules of the media is much greater then the speed of particles in a flow. So it is reasonable to expect that when $\ve\to0$ the particles in the flow begin to move independently. A rigorous mathematical proof of this fact was given in Lemma \ref{lem5.3}, where one can see that processes $x_{\ve}(u_1,\cdot)$ and $x_{\ve}(u_2,\cdot)$ became independent in the limit.  In particular, it causes the growth of the self-intersection local times for images of random field $\eta.$
Assume that random field $\eta$ and $W$ are independent. Then one can suppose that our probability space is represented as
$$
\Big(\Omega_1\times\Omega_2,\cF_1\otimes\cF_2,P_1\times P_2\Big).
$$
 Suppose that $W$ is defined on $\Omega_1$ and $\eta$ is defined on $\Omega_2$. The solution to Cauchy problem \eqref{eq5.1} has continuous modification with respect to $t.$ Moreover for every $\omega\in \Omega_1$ and $t>0$ it is a diffeomorphism with respect to $u$. For fixed $k\geq1$ consider weight function
$$
\rho(u)=\frac{1}{|\det\ x^{\prime}_{\ve}(u,t)|^{k-1}}.
$$
It follows from Lemma \ref{lem5.2} that
$$
\rho(u)=\exp\Big\{-\frac{k-1}{\ve^2}\int^t_0\int_{\mbR^2}(\nabla \varphi(\frac{x_\ve(u,t)-p}{\ve}),W(dp,ds))+\frac{c(k-1)t}{2\ve^2}\Big\},
$$
$u\in\mbR^2.$ For any $\omega_1\in\Omega_1 $ the function $\rho$ is continuous, since for any $\omega_1\in\Omega_1 $ the derivative of $x_{\ve}$ with respect to $u$ is continuous function.
Then due to Theorem \ref{thm2.1} and Remark \ref{remk2.1} for any $\omega\in \Omega_1$ a self-intersection local time for $x(\eta(u),t),\ u\in D$ with $t\geq0$ being fixed  is well-defined and
$$
\cT^{x_{\ve}(\eta,t)}_k=\int_{D}\ldots\int_{D}\rho(\eta(u_1))\prod^{k-1}_{j=1}\delta_0(\eta(u_{j+1})-\eta(u_{j}))d\vec{u}.
$$
Let us check that the following statement holds.
\begin{lem}
\label{lem5.5} For any $t>0$
$$
\lim_{\ve\to0}e^{-\frac{ck(k-1)t}{2\ve^2}}E\cT^{x_{\ve}(\eta,t)}_k=E\cT^{\eta}_k.
$$
\end{lem}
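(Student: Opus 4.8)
The plan is to identify the renormalization factor $e^{\frac{ck(k-1)t}{2\ve^2}}$ with the $W$-expectation of the weight $\rho(u)=\frac{1}{|\det x'_\ve(u,t)|^{k-1}}$ evaluated at an arbitrary fixed spatial point, and then to integrate this expectation against the self-intersection measure $\nu_k$ of the field $\eta$, using the independence of $\eta$ (defined on $\Omega_2$) from the driving noise $W$ (defined on $\Omega_1$). In fact this will yield the exact identity $e^{-\frac{ck(k-1)t}{2\ve^2}}E\,\cT^{x_\ve(\eta,t)}_k=E\,\cT^{\eta}_k$ for every $\ve>0$, so the stated limit is immediate.

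First I would fix $v\in\mbR^2$ and compute $E_W\rho(v)$. By Lemma \ref{lem5.2},
$$
\rho(v)=\exp\Big\{-(k-1)N_v(t)+\frac{(k-1)ct}{2\ve^2}\Big\},\qquad N_v(t)=\frac{1}{\ve^2}\int_0^t\int_{\mbR^2}\big(\nabla\varphi(\tfrac{x_\ve(v,s)-p}{\ve}),W(dp,ds)\big).
$$
The key observation is that the continuous $W$-martingale $N_v$ has a \emph{deterministic} quadratic characteristic: the substitution $q=(x_\ve(v,s)-p)/\ve$ turns $\int_{\mbR^2}\|\nabla\varphi(\tfrac{x_\ve(v,s)-p}{\ve})\|^2\,dp$ into $\ve^2\int_{\mbR^2}\|\nabla\varphi(q)\|^2\,dq=\ve^2 c$ for every $s$ and independently of the trajectory, so $\langle N_v\rangle_t=ct/\ve^2$. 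Hence $N_v(t)$ is a centered Gaussian variable with variance $ct/\ve^2$, so $E_W e^{-(k-1)N_v(t)}=\exp\{\frac{(k-1)^2ct}{2\ve^2}\}$, and therefore
$$
E_W\rho(v)=\exp\Big\{\frac{(k-1)^2ct}{2\ve^2}+\frac{(k-1)ct}{2\ve^2}\Big\}=e^{\frac{ck(k-1)t}{2\ve^2}},
$$
which does not depend on $v$.

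Next I would invoke the representation $\cT^{x_\ve(\eta,t)}_k=\int_{\mbR^2}\rho(z)\,\nu_k(dz)$, valid because for each fixed $W$ the function $z\mapsto\rho(z)$ is continuous, so this integral is exactly $\cT^\eta_k$ with that weight (Lemma \ref{lem2.6} and Remark \ref{remk2.1}). Since $\nu_k$ is a function of $\eta$ alone while $\rho(z)$ is a function of $W$ alone once $z$ is frozen, and since the integrand is nonnegative, Tonelli's theorem together with independence gives
$$
E\,\cT^{x_\ve(\eta,t)}_k=E_\eta\int_{\mbR^2}\big(E_W\rho(z)\big)\,\nu_k(dz)=e^{\frac{ck(k-1)t}{2\ve^2}}\,E\,\nu_k(\mbR^2)=e^{\frac{ck(k-1)t}{2\ve^2}}\,E\,\cT^{\eta}_k,
$$
the last equality being the normalization established in Lemma \ref{lem2.6}, and $E\,\nu_k(\mbR^2)<+\infty$ by Lemma \ref{lem2.7} with $m=1$. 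Multiplying by $e^{-\frac{ck(k-1)t}{2\ve^2}}$ completes the argument.

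The only delicate point is to make the representation $\cT^{x_\ve(\eta,t)}_k=\int\rho\,d\nu_k$ and the interchange of the $W$- and $\eta$-expectations fully rigorous, since $\rho$ is in general an unbounded (though continuous and $W$-almost surely finite) weight. I would handle this by the truncation procedure of Remark \ref{remk2.1}: prove the identity first for the bounded weights $\rho\wedge M$, for which $E\,\cT^\eta_k(\rho\wedge M)$ is computed directly from the Gaussian kernel exactly as in the proof of Theorem \ref{thm2.1}, and then let $M\to+\infty$; monotone convergence identifies the two sides, and since the right-hand side $e^{\frac{ck(k-1)t}{2\ve^2}}\,E\,\cT^\eta_k$ is finite, so is the left.
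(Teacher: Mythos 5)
Your proposal is correct and follows essentially the same route as the paper: condition on $\eta$ (equivalently, integrate against $\nu_k$ using the independence of $\eta$ and $W$) and observe that $E_W\rho(v)=e^{\frac{ck(k-1)t}{2\ve^2}}$ is constant in $v$, the paper obtaining this from the fact that $\cE_\ve(u,t)$ is a stochastic exponential with expectation one, while you obtain the same identity by noting that the driving martingale has deterministic quadratic characteristic $ct/\ve^2$ and is therefore Gaussian. The exact identity $E\,\cT^{x_\ve(\eta,t)}_k=e^{\frac{ck(k-1)t}{2\ve^2}}E\,\cT^{\eta}_k$ for every $\ve>0$ is indeed what the paper's argument yields as well.
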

\begin{proof} Since $\eta$ and $\rho$ are independent, then using approximating families for $\cT^{x_{\ve}(\eta,t)}_k$ and passing to the limit one can deduce the following relations
$$
E\cT^{x_{\ve}(\eta,t)}_k=E\ E\Big(\cT^{x_{\ve}(\eta,t)}_k|\eta\Big)=
$$
$$
=E\ E\Big(\int_{D}\ldots\int_{D}\rho(\eta(u_1))\prod^{k-1}_{j=1}\delta_0(\eta(u_{j+1})-\eta(u_{j}))d\vec{u}|\eta\Big)=
$$
$$
=E^\eta\int_{D}\ldots\int_{D}E^\rho\rho(\eta(u_1))\prod^{k-1}_{j=1}\delta_0(\eta(u_{j+1})-\eta(u_{j}))d\vec{u},
$$
where $E^\eta$ and $E^\rho$ are expectation with respect to $\eta$ and $\rho.$
Denote by
$$
\cE_{\ve}(u,t)=
$$
$$
=\exp\Big\{\Big(-\frac{(k-1)}{\ve^2}\int^t_0\int_{\mbR^2}(\nabla \varphi(\frac{x_\ve(u,t)-p}{\ve}),W(dp,ds))-\frac{c(k-1)^2t}{2\ve^2}\Big)\Big\}.
$$
Set
$$
\cE(u,t)=\cE_1(u,t).
$$
Since  $\cE_\ve(u,t)$ is a stochastic exponential, then
\begin{equation}
\label{eq5.8}
E\cE_\ve(u,t)=1.
\end{equation}
It follows from \eqref{eq5.8} that
$$
E^\rho\rho(\eta(u))=e^{\frac{ck(k-1)t}{2\ve^2}}
$$
which finishes the proof of the lemma.
\end{proof}
Next statement describes a behavior of the image $x_{\ve}(\eta,t).$
\begin{lem}
\label{lem5.6}
$$
\lim_{\ve\to0}E\int_{D}\int_{D}\|x_{\ve}(\eta(u_2),t)-x_{\ve}(\eta(u_1),t)\|^2du_1du_2=
$$
$$
=4t+E\int_{D}\int_{D}\|\eta(u_2)-\eta(u_1)\|^2du_1du_2.
$$
\end{lem}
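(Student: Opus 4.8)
The plan is to condition on the Gaussian field $\eta$ — which is independent of the driving noise $W$ — write each trajectory as its starting point plus a martingale, and reduce the whole statement to the decoupling estimate already obtained in the proof of Lemma~\ref{lem5.3}. I would write $x_\ve(v,t)=v+M^v_\ve(t)$, where $M^v_\ve(t)=\int^t_0\int_{\mbR^2}\varphi_\ve(x_\ve(v,s)-p)W(dp,ds)$ is, for each fixed $v$, a continuous centred $\mbR^2$-valued $L_2$-martingale with $E^W\|M^v_\ve(t)\|^2=2t$ (since $\int_{\mbR^2}\varphi^2_\ve=1$). For $v_1\ne v_2$ one expands
$$
\|x_\ve(v_2,t)-x_\ve(v_1,t)\|^2=\|v_2-v_1\|^2+2(v_2-v_1,M^{v_2}_\ve(t)-M^{v_1}_\ve(t))+\|M^{v_2}_\ve(t)-M^{v_1}_\ve(t)\|^2 ,
$$
and takes $E^W$: the cross term drops because $M^{v_i}_\ve(t)$ is centred, while $E^W(M^{v_1}_\ve(t),M^{v_2}_\ve(t))=2E^W\int^t_0(\varphi_\ve\ast\varphi_\ve)(x_\ve(v_1,s)-x_\ve(v_2,s))\,ds$ (the mixed-coordinate covariations vanish since the two components of $W$ are independent). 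This gives
$$
E^W\|x_\ve(v_2,t)-x_\ve(v_1,t)\|^2=\|v_2-v_1\|^2+4t-4E^W\int^t_0(\varphi_\ve\ast\varphi_\ve)(x_\ve(v_1,s)-x_\ve(v_2,s))\,ds .
$$

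Next I would set $v_i=\eta(u_i)$, integrate over $D\times D$, and apply $E^\eta$; the integrands being nonnegative, all interchanges are justified by Tonelli, and since $D$ has unit Lebesgue measure one obtains
$$
E\int_D\int_D\|x_\ve(\eta(u_2),t)-x_\ve(\eta(u_1),t)\|^2\,du_1du_2=E\int_D\int_D\|\eta(u_2)-\eta(u_1)\|^2\,du_1du_2+4t-4R_\ve ,
$$
with $R_\ve=E\int_D\int_D E^W\int^t_0(\varphi_\ve\ast\varphi_\ve)(x_\ve(\eta(u_1),s)-x_\ve(\eta(u_2),s))\,ds\,du_1du_2$. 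The first term on the right is finite by the covariance estimate of Section~2, so the lemma reduces to proving $R_\ve\to0$.

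For that last step I would argue as follows. Since $(\varphi_\ve\ast\varphi_\ve)(z)\le(\varphi_\ve\ast\varphi_\ve)(0)=\int_{\mbR^2}\varphi^2_\ve=1$ and $\mathrm{supp}(\varphi_\ve\ast\varphi_\ve)\subset B(0,2\ve)$, the inner quantity $E^W\int^t_0(\varphi_\ve\ast\varphi_\ve)(\cdots)\,ds$ is at most $t$ uniformly in $u_1,u_2$ and in the realization of $\eta$, which supplies a constant dominating function on the finite measure space $\Omega\times D\times D$. For fixed $u_1\ne u_2$ one has $\eta(u_1)\ne\eta(u_2)$ almost surely, because $\eta(u_1)-\eta(u_2)$ is a nondegenerate Gaussian vector ($E(\eta_1(u_1)-\eta_1(u_2))^2>0$ whenever $u_1\ne u_2$, the two time coordinates lying in $[1;2]$, so the covariance formula of Section~2 never makes this variance vanish). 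Freezing such an $\eta$, the trajectories start from two distinct deterministic points, and by the coupling construction in the proof of Lemma~\ref{lem5.3} — with $\tau_\ve$ the first time these trajectories come within distance $2\ve$ of each other — the integral $\int^t_0(\varphi_\ve\ast\varphi_\ve)(x_\ve(\eta(u_1),s)-x_\ve(\eta(u_2),s))\,ds$ vanishes on $\{\tau_\ve>t\}$ and is bounded by $t$ everywhere, whence $E^W\int^t_0(\varphi_\ve\ast\varphi_\ve)(\cdots)\,ds\le t\,P^W\{\tau_\ve\le t\}$, which tends to $0$ by \eqref{eq5.2}. Dominated convergence then yields $R_\ve\to0$, finishing the proof. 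The hard part is precisely this last step: it requires combining the non-degeneracy of the field (so the two trajectories start apart almost surely), the decoupling estimate \eqref{eq5.2} of Lemma~\ref{lem5.3} applied path-by-path in $\eta$, and the uniform bound $\le t$ that legitimizes the passage to the limit under the $\eta$- and $(u_1,u_2)$-integrals; everything before it is routine It\^o calculus and Tonelli.
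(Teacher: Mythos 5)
Your proof is correct, but it takes a genuinely different route from the paper. The paper's proof is very short: it invokes the weak convergence of $\bigl(x_{\ve}(u_1,\cdot),x_{\ve}(u_2,\cdot)\bigr)$ to $\bigl(u_1+w_1(\cdot),u_2+w_2(\cdot)\bigr)$ from Lemma~\ref{lem5.3}, states a pathwise domination bound of the form $\|x_{\ve}(\eta(u_2),t)-x_{\ve}(\eta(u_1),t)\|^2\le 2E\|\eta(u_2)-\eta(u_1)\|+8t$, and concludes by dominated convergence that the second moments converge to $E\|\eta(u_2)-\eta(u_1)+w_2(t)-w_1(t)\|^2=4t+E\|\eta(u_2)-\eta(u_1)\|^2$. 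You instead compute $E^W\|x_\ve(v_2,t)-x_\ve(v_1,t)\|^2$ exactly by It\^o isometry, obtaining $\|v_2-v_1\|^2+4t-4E^W\int_0^t(\varphi_\ve\ast\varphi_\ve)(x_\ve(v_1,s)-x_\ve(v_2,s))\,ds$, and then show the remainder vanishes using $\mathrm{supp}(\varphi_\ve\ast\varphi_\ve)\subset B(0,2\ve)$, the bound $\varphi_\ve\ast\varphi_\ve\le 1$, the a.s.\ nondegeneracy $\eta(u_1)\ne\eta(u_2)$, and the estimate \eqref{eq5.2}. Both arguments ultimately rest on the same decoupling fact $P\{\tau_\ve<t\}\to 0$, but yours buys an exact identity at every $\ve$ and an explicit error bound $R_\ve\le t\,E\,P^W\{\tau_\ve\le t\}$, thereby sidestepping a soft spot in the paper's version: weak convergence of the pair process gives convergence of expectations only for bounded continuous functionals, so upgrading it to convergence of second moments requires a uniform-integrability argument, and the dominating bound as printed (a random left-hand side against a deterministic right-hand side, with $\|\eta(u_2)-\eta(u_1)\|$ to the first power) does not literally supply one without repair. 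Your route is more elementary and, as written, more complete.
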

\begin{proof} One can check that for any $\ve>0$
$$
\|x_{\ve}(\eta(u_2),t)-x_{\ve}(\eta(u_1),t)\|^2\leq
$$
$$
\leq 2E\|\eta(u_2)-\eta(u_1)\|+8t.
$$
Applying Lebesgue's dominated convergence theorem and Lemma \ref{lem5.3} one can conclude that
$$
\lim_{\ve\to0}E\int_{D}\int_{D}\|x_{\ve}(\eta(u_2),t)-x_{\ve}(\eta(u_1),t)\|^2du_1du_2=
$$
$$
=\int_{D}\int_{D}E\|\eta(u_2)-\eta(u_1)+w_2(t)-w_1(t)\|^2du_1du_2=
$$
$$
=4t+E\int_{D}\int_{D}\|\eta(u_2)-\eta(u_1)\|^2du_1du_2,
$$
where $w_1,w_1$ are two independent planar Wiener processes such that $w_1,w_2$ and $\eta$ are independent.
\end{proof}
Together Lemma \ref{eq5.5} and Lemma \ref{eq5.6} describe the interesting features of our Gaussian field in stochastic flow. On the one hand it follows from Lemma \ref{eq5.5} that the expectation of self-intersection local time for the field $x_{\ve}(\eta(u),t),\ u\in D$ in $e^{\frac{ck(k-1)t}{2\ve^2}}$ times greater then the expectation of self-intersection local time for the initial field $\eta(u),\ u\in D$ and grows to infinity as radius of interaction tends to zero. On the other hand one can see from Lemma \ref{eq5.6} that with the decreasing of radius of interaction with the media the value
$$
E\int_D\int_D\|x_{\ve}(\eta(u_2),t)-x_{\ve}(\eta(u_1),t)\|^2du_1du_2
$$
almost does not differ from
$$
E\int_D\int_D\|\eta(u_2)-\eta(u_1)\|^2du_1du_2.
$$

Now let us consider a long-time behavior of the self-intersection local times for the evolving random fields. Lemma \ref{lem5.2} allows to discuss a transformation of random field $\eta$ by the isotropic stochastic flow \eqref{eq5.1}. Consider Cauchy problem
$$
\begin{cases}
dx(u,t)=\int_{\mbR^2}\varphi(x(u,t)-p)W(dp,dt)\\	
x(u,0)=u,\ u\in\mbR^2.
\end{cases}
$$
It follows from Lemma \ref{lem5.2} that
$$
det\  x^{\prime}(u,t)=\exp\Big\{\int^t_0\int_{\mbR^2}(\nabla \varphi(x(u,t)-p),W(dp,ds))-\frac{ct}{2}\Big\},
$$
where $c=\int_{\mbR^2}\|\nabla \varphi(q)\|^2dq$.
In this case the weight function $\rho$ for $k$-multiple self-intersection local time of field $x(\eta(u),t),\ u\in D$ has representation
\begin{equation}
\label{eq5.9}
\rho(u)=exp\Big\{-(k-1)\int^t_0\int_{\mbR^2}(\nabla \varphi(x(u,t)-p),W(dp,ds))+\frac{(k-1)ct}{2}\Big\}.
\end{equation}
One can check that the following statement holds.
\begin{lem}
\label{lem5.7}
$$
\lim_{t\to+\infty}e^{-\frac{ck(k-1)t}{2}}E\cT^{x(\eta,t)}_k=E\cT^{\eta}_k.
$$
\end{lem}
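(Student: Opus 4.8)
The plan is to follow the scheme of the proof of Lemma \ref{lem5.5} almost verbatim, exploiting that here the flow $x(v,t),\ v\in\mbR^2,\ t\ge0$ solves an ordinary (noninteracting) Cauchy problem driven only by $W$, so that the weight $\rho$ from \eqref{eq5.9} and the field $\eta$ are independent. First I would condition on $\eta$ and, working at the level of the approximating families $\cT^{x(\eta,t)}_{\ve,k}$ (with $f_{\ve}$ the delta‑approximation) and using their $L_2$‑convergence guaranteed by Theorem \ref{thm2.1} and Remark \ref{remk2.1}, apply Fubini's theorem for each fixed $\ve$ and then pass to the limit $\ve\to0$ to obtain, exactly as in Lemma \ref{lem5.5},
$$
E\cT^{x(\eta,t)}_k=E^\eta\int_D\ldots\int_D E^\rho\rho(\eta(u_1))\prod^{k-1}_{j=1}\delta_0(\eta(u_{j+1})-\eta(u_{j}))d\vec{u}.
$$

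The key computation is the evaluation of $E^\rho\rho(v)$ for a fixed starting point $v\in\mbR^2$. I would introduce the process
$$
\cE(u,t)=\exp\Big\{-(k-1)\int^t_0\int_{\mbR^2}(\nabla\varphi(x(u,s)-p),W(dp,ds))-\frac{c(k-1)^2t}{2}\Big\}.
$$
Since $\varphi\in C^{\infty}_0(\mbR^2)$, the integrand $\nabla\varphi(x(u,s)-p)$ is bounded and compactly supported in $p$, so the quadratic variation of the martingale $-(k-1)\int^{\cdot}_0\int_{\mbR^2}(\nabla\varphi(x(u,s)-p),W(dp,ds))$ equals $(k-1)^2\int^t_0\int_{\mbR^2}\|\nabla\varphi(x(u,s)-p)\|^2dpds=(k-1)^2ct$, which is deterministic; hence Novikov's condition is satisfied and $\cE(u,t)$ is a genuine stochastic exponential with $E\cE(u,t)=1$. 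Comparing with \eqref{eq5.9} one sees that $\rho(u)=\cE(u,t)\,e^{\frac{ck(k-1)t}{2}}$, so that $E^\rho\rho(v)=e^{\frac{ck(k-1)t}{2}}$ independently of $v$.

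Substituting this constant back into the displayed identity, the spatial integral collapses to $E^\eta\int_D\ldots\int_D\prod^{k-1}_{j=1}\delta_0(\eta(u_{j+1})-\eta(u_{j}))d\vec{u}=E\cT^{\eta}_k$, and therefore $e^{-\frac{ck(k-1)t}{2}}E\cT^{x(\eta,t)}_k=E\cT^{\eta}_k$ already for every fixed $t>0$; the limit as $t\to+\infty$ is then immediate. There is no real obstacle: the only point that needs a little care is verifying that $\cE(u,t)$ has expectation exactly $1$ rather than merely $\le1$, which is clear here because the smoothness and compact support of $\varphi$ force the relevant quadratic variation to be a deterministic multiple of $t$.
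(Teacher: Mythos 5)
Your proposal is correct and follows essentially the same route as the paper: condition on $\eta$ using the independence of the flow from the field, recognize $\rho(u)$ as $e^{\frac{ck(k-1)t}{2}}$ times a stochastic exponential of expectation one, and conclude that $e^{-\frac{ck(k-1)t}{2}}E\cT^{x(\eta,t)}_k=E\cT^{\eta}_k$ already for every fixed $t$. Your added remark that the deterministic quadratic variation $(k-1)^2ct$ guarantees $E\cE(u,t)=1$ (rather than merely $\leq 1$) is a worthwhile detail the paper leaves implicit.
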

\begin{proof}Using the same arguments as in the proof of Lemma \ref{eq5.5} one can see that
$$
E\cT^{x(\eta,t)}_k=EE(\cT^{x(\eta,t)}_k|\eta)=
$$
$$
=E^\eta\int_D\ldots\int_DE^\rho\rho(\eta(u_1))\prod^{k-1}_{j=1}\delta_0(\eta(u_{j+1})-\eta(u_{j}))d\vec{u}.
$$
Using \eqref{eq5.9} one can conclude that
$$
E^\rho\rho(\eta(u_1))=e^{\frac{ck(k-1)t}{2}}
$$
which finishes the proof of lemma.
\end{proof}
As one can see from Lemma \ref{lem5.7} the expectation of self-intersection local time for the field $x(\eta(u),t),\ u\in D$ grows exponentially as $t\to+\infty.$ To compensate this phenomena let us include in the picture the interaction between particles. Next equation will be equation with interaction of the form
\begin{equation}
\label{eq5.10}
\begin{cases}
dx(u,t)=a(x(u,t)-\int_{\mbR^2}v\mu_t(dv))dt+\int_{\mbR^2}\varphi(x(u,t)-p)W(dp,dt)\\	 x(u,0)=u,\ u\in\mbR^2\\
\mu_t=\mu_0\circ x(\cdot,t)^{-1},\ a>0.
\end{cases}
\end{equation}
In this equation there are two different ways of mass transportation. The first term describes the motion of the particles with respect to joint center of  mass
$$
m_t=\int_{\mbR^2}v\mu_t(dv).
$$
The second term is related to the isotropic stochastic flow. It can be seen from Lemma \ref{lem5.7} that the self-intersection local times in such flow are growing to the infinity as $t$ tends to infinity with the radius of interaction being fixed but we can compensate this growth by the choice of an appropriate $a$ in the first term.
Let $\mu_0$ be an occupation measure of the field $\eta.$ Then equation \eqref{eq5.10} can be rewritten as follows
\begin{equation}
\label{eq5.11}
\begin{cases}
dx(u,t)=a(x(u,t)-\int_{\mbR^2}x(v,t)\mu_0(dv))dt+\int_{\mbR^2}\varphi(x(u,t)-p)W(dp,dt)\\	 x(u,0)=u,\ u\in\mbR^2\\
\mu_t=\mu_0\circ x(\cdot,t)^{-1}.
\end{cases}
\end{equation}
 Using the same arguments as in the proof of Lemma \ref{lem5.2} it is not difficult to conclude that
\begin{equation}
\label{eq5.12}
det\  x^{\prime}(u,t)=\exp\Big\{\int^t_0\int_{\mbR^2}(\nabla \varphi(x(u,t)-p),W(dp,ds))-\frac{ct}{2}+at\Big\},
\end{equation}
where
$$
c=\int_{\mbR^2}\|\nabla\varphi(q)\|^2dq.
$$
Consequently the weight function right now has the representation
$$
\rho(u)=\exp\Big\{-(k-1)\int^t_0\int_{\mbR^2}(\nabla \varphi(x(u,t)-p),W(dp,ds))+
$$
\begin{equation}
\label{eq5.13}
+\frac{c(k-1)t}{2}-a(k-1)t\Big\}.
\end{equation}
Essential difference from the previous cases is that now $x$ depends on $\eta.$ Repeating arguments of Section 4 one can prove that the self-intersection local time $\cT^{x(\eta,t)}_k$ exists and
$$
\cT^{x(\eta,t)}_k=\cT^{\eta}_k(\rho)
$$
with $\rho$ from \eqref{eq5.13}. Moreover repeating arguments of proofs of Lemma \ref{eq5.5} and Lemma \ref{lem5.7} one can see that the following statement holds.
\begin{thm}
\label{thm5.1}
$$
\lim_{t\to+\infty}e^{-(kc-2a)(k-1)t}E\cT^{x(\eta,t)}_k=E\cT^\eta_k.
$$
\end{thm}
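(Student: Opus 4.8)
The plan is to compute $E\cT^{x(\eta,t)}_k$ explicitly, just as in the proofs of Lemma~\ref{lem5.5} and Lemma~\ref{lem5.7}, and to find that it equals $E\cT^{\eta}_k$ times a deterministic factor which is the reciprocal of the renormalization in the statement; the limit $t\to+\infty$ is then immediate, since the renormalized quantity will not depend on $t$ at all. As a preliminary I would record --- as already indicated before the statement --- that $\cT^{x(\eta,t)}_k$ exists and equals $\cT^{\eta}_k(\rho)$ with $\rho$ as in \eqref{eq5.13}. This follows from the Section~4 machinery: approximate the occupation measure $\mu_0$ of $\eta$ by the discrete measures $\mu_n$ of Lemma~\ref{lem4.3}, apply Theorem~\ref{thm4.1} to the flow $x_n$ (which depends on only finitely many values of $\eta$), and pass to the limit $n\to+\infty$ using the stability estimates of Lemma~\ref{lem4.1}; formula \eqref{eq5.12} identifies the weight as \eqref{eq5.13}.

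Next I would use the independence of $\eta$ and $W$. Conditioning on $\eta$ turns $\mu_0$ into a fixed measure, so the flow solving \eqref{eq5.11}, and hence $\rho$, becomes a functional of $W$ only. Writing the $f_{\ve}$-approximations of $\cT^{x(\eta,t)}_k$, applying Fubini (all integrands are nonnegative), and letting $\ve\to0$, one gets
$$
E\cT^{x(\eta,t)}_k=E^{\eta}\int_D\ldots\int_D E^{\rho}\bigl[\rho(\eta(u_1))\bigr]\prod^{k-1}_{j=1}\delta_0(\eta(u_{j+1})-\eta(u_{j}))\,d\vec u .
$$
For the inner expectation, fix $v\in\mbR^2$ and a realization of $\eta$: the process $N_t(v)=\int^t_0\int_{\mbR^2}(\nabla\varphi(x(v,s)-p),W(dp,ds))$ is a continuous martingale in the filtration of $W$ with
$$
\langle N(v)\rangle_t=\int^t_0\int_{\mbR^2}\|\nabla\varphi(x(v,s)-p)\|^2\,dp\,ds=\int^t_0\Bigl(\int_{\mbR^2}\|\nabla\varphi(q)\|^2\,dq\Bigr)ds=ct
$$
by translation invariance of Lebesgue measure; the crucial point is that $\langle N(v)\rangle_t=ct$ is deterministic and does not depend on the trajectory $x(v,\cdot)$, hence not on $v$ or on $\eta$. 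Since this characteristic is bounded, $\exp\{-(k-1)N_t(v)-\frac{(k-1)^2ct}{2}\}$ is a true martingale of mean one, so by the explicit form \eqref{eq5.13} of $\rho$,
$$
E^{\rho}\bigl[\rho(v)\bigr]=\exp\Bigl\{\frac{(k-1)^2ct}{2}\Bigr\}\exp\Bigl\{\frac{(k-1)ct}{2}-(k-1)at\Bigr\}=\exp\Bigl\{\frac{k(k-1)ct}{2}-(k-1)at\Bigr\},
$$
which for $a=0$ is the factor $e^{ck(k-1)t/2}$ of Lemma~\ref{lem5.7}.

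Since $E^{\rho}[\rho(v)]$ does not depend on $v$, it factors out of the spatial integral, and because $E^{\eta}\int_D\ldots\int_D\prod^{k-1}_{j=1}\delta_0(\eta(u_{j+1})-\eta(u_{j}))\,d\vec u=E\cT^{\eta}_k$ we obtain
$$
E\cT^{x(\eta,t)}_k=\exp\Bigl\{\frac{(kc-2a)(k-1)t}{2}\Bigr\}\,E\cT^{\eta}_k ,
$$
so $E\cT^{\eta}_k$ is recovered for every $t$ after multiplying by the reciprocal of this deterministic factor, and the limit follows at once. The one genuinely delicate step I anticipate is the passage to the limit in the $f_{\ve}$-approximations under the outer expectation, now that $\rho$ is a functional of both $\eta$ and $W$ (through the center-of-mass term in \eqref{eq5.11}): this should be handled exactly as in Theorem~\ref{thm2.1}, by dominated convergence based on the uniform Gram-determinant lower bounds together with the moment estimate of Lemma~\ref{lem2.7}, combined with the $\mu_n$-discretization and Lemma~\ref{lem4.1}. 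The martingale computation itself is immediate, the whole point being that $\langle N(v)\rangle_t=ct$ regardless of the trajectory.
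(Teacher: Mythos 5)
Your proposal follows exactly the route the paper intends: condition on $\eta$ so that $\rho$ becomes a functional of $W$ alone, use that $\exp\{-(k-1)\beta(u,t)-\frac{c(k-1)^2t}{2}\}$ is a mean-one stochastic exponential because $\langle\beta(u,\cdot)\rangle(t)=ct$ is deterministic, and read off $E^{\rho}[\rho(v)]$ from \eqref{eq5.13}; the paper itself only says to repeat the arguments of Lemmas \ref{lem5.5} and \ref{lem5.7}, which is precisely what you did, and you handle the existence step via the Section 4 machinery as the paper does. One point you should not gloss over: your (correct) computation gives $E\cT^{x(\eta,t)}_k=e^{\frac{(kc-2a)(k-1)t}{2}}E\cT^{\eta}_k$, which is consistent with the paper's own normalization of $m_k(t)$ before Lemma \ref{lem5.8}, but the reciprocal of that factor is $e^{-\frac{(kc-2a)(k-1)t}{2}}$, not the $e^{-(kc-2a)(k-1)t}$ appearing in the statement of Theorem \ref{thm5.1}; with the printed renormalization the limit would be $0$ when $kc>2a$ and $+\infty$ when $kc<2a$, not $E\cT^{\eta}_k$. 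So you must either state explicitly that the exponent in the theorem is off by a factor of $\frac12$ (a typo propagated from the Introduction) or adjust your final sentence, since ``multiplying by the reciprocal of this deterministic factor'' silently replaces the renormalization in the statement by a different one.
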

Consider random measure on $\mbR^2$ defined as follows for any $A\in\cB(\mbR^2)$
$$
\nu_k(A)=\int_D\ldots\int_D1_{A}(\eta(u_1))\prod^{k-1}_{i=1}\delta_0(\eta(u_{i+1})-\eta(u_{i}))d\vec{u}.
$$
(for proof of existence of such measure see Lemma \ref{lem2.6}).
Let $x$ be solution to equation with interaction \eqref{eq5.11}. Then $\cT^{x(\eta,t)}_k$ admits the following representation
$$
\cT^{x(\eta,t)}_k=\int_{\mbR^2}\rho(u)\nu_k(du),
$$
where the weight-function $\rho$ is defined in \eqref{eq5.13}. Denote by
$$
\beta(u,t)=\int^t_0\int_{\mbR^2}(\nabla \varphi(x(u,t)-p),W(dp,ds)).
$$
Denote by
$$
\cF_t=\sigma(\eta(u),\ u\in D,\ W(\Delta),\ \Delta\subset [0;t]\times\mbR^2).
$$
It can be checked that the random process $\beta(u,t),\ t\in[0;1]$ is continuous square integrable martingale with respect to $\cF_t$ with the following quadratic characteristics
$$
\langle\beta(u,\cdot)\rangle(t)=ct,
$$
where
$$
c=\int_{\mbR^2}\|\nabla\varphi(q)\|^2dq.
$$
Set
$$
m_k(t)=\exp\Big\{a(k-1)t-\frac{c(k-1)t}{2}-\frac{c(k-1)^2t}{2}\Big\}\cT^{x(\eta,t)}_k=
$$
$$
=\int_{\mbR^2}\exp\Big\{-(k-1)\beta(u,t)-\frac{c(k-1)^2t}{2}\Big\}\nu_k(du).
$$
Put
$$
\cE(u,t)=\exp\Big\{-(k-1)\beta(u,t)-\frac{c(k-1)^2t}{2}\Big\}.
$$

\begin{lem}
\label{lem5.8}The random process
$m_k(t)\ t\in[0;1]$ is a positive continuous square integrable martingale with respect to $\cF_t$ with the following quadratic characteristics
$$
\langle m_k\rangle(t)=(k-1)^2\int_{\mbR^2}\int_{\mbR^2}\int^t_0\cE(u,s)\cE(v,s)\cdot
$$
$$
\cdot(\nabla \varphi\ast\nabla\varphi(x(u,s)-x(v,s)),ds)\nu_k(du)\nu_k(dv).
$$
\end{lem}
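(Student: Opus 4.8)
The plan is to start from the observation that, for each fixed $u$, the process $\cE(u,\cdot)$ is nothing but the stochastic exponential of the continuous square integrable martingale $-(k-1)\beta(u,\cdot)$. Indeed, since $\langle\beta(u,\cdot)\rangle(t)=ct$ with $c=\int_{\mbR^2}\|\nabla\varphi(q)\|^2dq$ (as recalled just before the statement, using the integrand $\nabla\varphi(x(u,s)-p)$), one has $\cE(u,t)=\exp\{-(k-1)\beta(u,t)-\tfrac12(k-1)^2\langle\beta(u,\cdot)\rangle(t)\}$, so $d\cE(u,t)=-(k-1)\cE(u,t)\,d\beta(u,t)$ with $\cE(u,0)=1$. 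Because $\nabla\varphi$ is bounded, $\langle\beta(u,\cdot)\rangle$ is deterministic and bounded on $[0;1]$, Novikov's condition holds, and $\cE(u,\cdot)$ is a genuine $\cF_t$-martingale; applying the same remark to $-2(k-1)\beta(u,\cdot)$ gives the uniform bound $E\big(\cE(u,t)^2\mid\cF_0\big)=\exp\{c(k-1)^2t\}$. The proof then splits in two steps: (i) transfer the martingale and integrability properties of $\cE(u,\cdot)$ to $m_k(t)=\int_{\mbR^2}\cE(u,t)\,\nu_k(du)$ by a Fubini argument over $\nu_k$, which is $\cF_0$-measurable by Lemma \ref{lem2.6}; (ii) represent $m_k$ as a single stochastic integral against the sheet $W$ and read off $\langle m_k\rangle$.

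For step (i), positivity of $m_k$ is immediate from $\cE(u,\cdot)>0$, and continuity of $t\mapsto m_k(t)$ follows by dominated convergence: the range of $\eta$, hence the support of $\nu_k$, is a.s.\ compact, $t\mapsto\cE(u,t)$ is continuous, and $\nu_k(\mbR^2)<\infty$ by Lemma \ref{lem2.7}, while $\sup_{t\le1}\cE(u,\cdot)$ is controlled uniformly in $u$ via Doob's inequality and the second-moment bound. For the martingale property, fix $0\le s<t\le1$; since $\nu_k$ is $\cF_0\subset\cF_s$-measurable and $E\,m_k(t)=E\int_{\mbR^2}E(\cE(u,t)\mid\cF_0)\,\nu_k(du)=E\,\nu_k(\mbR^2)<\infty$, a conditional Fubini theorem gives
$$
E\big(m_k(t)\mid\cF_s\big)=\int_{\mbR^2}E\big(\cE(u,t)\mid\cF_s\big)\,\nu_k(du)=\int_{\mbR^2}\cE(u,s)\,\nu_k(du)=m_k(s).
$$
Square integrability is obtained by conditioning on $\cF_0$ and using Cauchy--Schwarz with the bound above: $E\,m_k(t)^2\le\exp\{c(k-1)^2t\}\,E\,\nu_k(\mbR^2)^2<\infty$, again by Lemma \ref{lem2.7}.

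For step (ii), substituting the stochastic differential of $\cE(u,\cdot)$ and interchanging the $\nu_k$-integral with the $W$-integral one expects
$$
m_k(t)-m_k(0)=-(k-1)\int_0^t\!\int_{\mbR^2}\big(\Psi(s,p),\,W(dp,ds)\big),\qquad \Psi(s,p)=\int_{\mbR^2}\cE(u,s)\,\nabla\varphi(x(u,s)-p)\,\nu_k(du).
$$
By Cauchy--Schwarz on $\nu_k$, $\|\Psi(s,p)\|^2\le\nu_k(\mbR^2)\int_{\mbR^2}\cE(u,s)^2\|\nabla\varphi(x(u,s)-p)\|^2\nu_k(du)$, and since $\int_{\mbR^2}\|\nabla\varphi(x(u,s)-p)\|^2dp=c$ and $E(\cE(u,s)^2\mid\cF_0)=\exp\{c(k-1)^2s\}$, one gets $E\int_0^t\int_{\mbR^2}\|\Psi(s,p)\|^2dp\,ds\le c\,E\,\nu_k(\mbR^2)^2\int_0^t\exp\{c(k-1)^2s\}ds<\infty$. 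Hence $\Psi$ is an admissible integrand, $m_k$ is a continuous $L^2$-martingale, and $\langle m_k\rangle(t)=(k-1)^2\int_0^t\int_{\mbR^2}\|\Psi(s,p)\|^2\,dp\,ds$. Expanding the square, applying Fubini in $(u,v,p)$, and using translation invariance of Lebesgue measure, $\int_{\mbR^2}(\nabla\varphi(x(u,s)-p),\nabla\varphi(x(v,s)-p))\,dp=\nabla\varphi\ast\nabla\varphi(x(u,s)-x(v,s))$, yields exactly the asserted expression for $\langle m_k\rangle(t)$; its continuity in $t$ is then automatic.

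The hard part is the interchange in step (ii): $\nu_k$ is a genuinely random (and not discrete) measure, so the classical stochastic Fubini theorem does not apply verbatim. I would justify it by conditioning on $\cF_0$ — with the initial measure in \eqref{eq5.11} frozen, $x(u,\cdot)$ solves an ordinary It\^o equation driven by $W$, $\beta(u,\cdot)$ is a martingale in the conditional filtration, and $\nu_k$ is a fixed finite measure, so the usual stochastic Fubini theorem applies conditionally once the $L^2$-bound above is in force — or, alternatively, by approximating $\nu_k$ by the self-intersection measures built from the $\ve$-mollified integrands of Theorem \ref{thm2.1}, for which the interchange and the characteristic computation are elementary, and passing to the $L_2(\Omega)$-limit with the help of the uniform bound $E(\cE(u,t)^2\mid\cF_0)=\exp\{c(k-1)^2t\}$, the compact support and boundedness of $\nabla\varphi$, and the moment estimates of Lemma \ref{lem2.7}.
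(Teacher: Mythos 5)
Your proposal is correct, and its core --- identifying $\cE(u,\cdot)$ as the stochastic exponential of $-(k-1)\beta(u,\cdot)$, and transferring positivity, the martingale property and square integrability to $m_k$ by conditioning on $\cF_0$ (with respect to which $\nu_k$ is measurable) together with a Fubini argument and the bound $E\cE(u,t)^2=e^{c(k-1)^2t}$ --- coincides with the paper's proof. You diverge in two sub-steps. For continuity the paper does not use dominated convergence with a Doob-type dominating function: it verifies Kolmogorov's continuity criterion for $m_k$ directly, bounding $E(m_k(t_2)-m_k(t_1))^4$ via Jensen's inequality, the multiplicativity of stochastic exponentials, the Burkholder--Davis--Gundy inequality and the moment bound $E\nu_k(\mbR^2)^3<+\infty$ from Lemma \ref{lem2.7}; this avoids having to argue that $u\mapsto\sup_{t\leq1}\cE(u,t)$ is $\nu_k$-integrable, which your route tacitly needs (it is salvageable via joint continuity of the flow and a.s.\ compactness of the support of $\nu_k$, but you should make that explicit). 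For the bracket the paper does not pass through a single stochastic integral against $W$ with integrand $\Psi(s,p)=\int_{\mbR^2}\cE(u,s)\nabla\varphi(x(u,s)-p)\nu_k(du)$; it instead computes the joint characteristic of the pair $\cE(u,\cdot)$, $\cE(v,\cdot)$ pointwise in $(u,v)$ from $\langle\beta^k(u,\cdot),\beta^k(v,\cdot)\rangle(t)=(k-1)^2\int^t_0\nabla\varphi\ast\nabla\varphi(x(u,s)-x(v,s))ds$ and then integrates against $\nu_k\times\nu_k$ by bilinearity of the bracket. The two computations give the same answer, and both require the same stochastic-Fubini interchange over the random measure $\nu_k$; the paper leaves this interchange implicit, whereas you flag it and propose two legitimate repairs (freezing $\cF_0$, or approximating $\nu_k$ by the mollified self-intersection measures and passing to the $L_2$-limit), which is if anything more careful than the original.
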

\begin{proof}Since $\beta(u,t), \ t\in[0;1]$ is a continuous square integrable martingale with respect to filtration $\cF_t$, then it follows from \cite{23} that $\cE(u,t)$ is also continuous square integrable martingale with respect to filtration $\cF_t$
which satisfies the following SDE
$$
\begin{cases}
d\cE(u,t)=\cE(u,t)d\beta^k(u,t)\\
\cE(u,0)=1,
\end{cases}
$$
where
$$
\beta^k(u,t)=-(k-1)\beta(u,t).
$$
 A joint characteristics of martingales $\cE(u,t)$ and $\cE(v,t)$ has a representation \cite{23} (II, Prop. 2.3, p. 56)
$$
\langle\cE(u,\cdot)\cE(v,\cdot)\rangle(t)=
$$
$$
=\int^t_0\cE(u,s)\cE(v,s)d\langle\beta^k(u,\cdot),\beta^k(v,\cdot)\rangle(s).
$$
Let us find a joint characteristics of martingales $\beta^k(u,\cdot)$ and $\beta^k(v,\cdot).$
Since
$$
\beta^k(u,t)=-(k-1)\sum^{\infty}_{l=1}\int^t_0(\alpha_l(x(u,s)),d\beta_l(s)),
$$
then
$$
\langle\beta^k(u,\cdot),\beta^k(v,\cdot)\rangle(t)=
$$
$$
=(k-1)^2\sum^{\infty}_{l=1}\int^t_0(\alpha_l(x(u,s),\alpha_l(x(v,s)ds=
$$
$$
=(k-1)^2\int^t_0\int_{\mbR^2}(\nabla \varphi(x(u,s)-p),\nabla \varphi(x(v,s)-p)))dpds.
$$
Consequently
$$
\langle\cE(u,\cdot)\cE(v,\cdot)\rangle(t)=
$$
$$
=(k-1)^2\int^t_0\int_{\mbR^2}\cE(u,s)\cE(v,s)\cdot
$$
\begin{equation}
\label{eq5.15}
\cdot(\nabla \varphi(x(u,s)-p),\nabla \varphi(x(v,s)-p))dpds.
\end{equation}
It is obvious that $m_k$ is positive. Let us check that $m_k$ is martingale. Note that for $s<t$
\begin{equation}
\label{eq5.20}
E(m_k(t)|\cF_s)=E(\int_{\mbR^2}\cE(u,t)\nu_k(du)|\cF_s).
\end{equation}
The random measure $\nu_k$ is measurable with respect to $\cF_0\subset\cF_s$.
Consequently \eqref{eq5.20} equals
$$
\int_{\mbR^2}E(\cE(u,t)|\cF_s)\nu_k(du)=\int_{\mbR^2}\cE(u,s)\nu_k(du).
$$
To prove that $m_k(t),\ t\in[0;1]$ is square integrable martingale, let us estimate
$$
E(m_k(t))^2=E\int_{\mbR^2}\int_{\mbR^2}\cE(u_1,t)\cE(u_2,t)\nu_k(du_1)\nu_k(du_2)=
$$
$$
=EE(\int_{\mbR^2}\int_{\mbR^2}\cE(u_1,t)\cE(u_2,t)\nu_k(du_1)\nu_k(du_2)|\cF_0)=
$$
\begin{equation}
\label{eq5.21}
E^{\eta}\int_{\mbR^2}\int_{\mbR^2}E^{W}\cE(u_1,t)\cE(u_2,t)\nu_k(du_1)\nu_k(du_2),
\end{equation}
where by $E^\eta$ and $E^{W}$ we denoted expectations with respect to $\eta$ and $W.$ Applying Cauchy's inequality one can conclude that \eqref{eq5.21} less or equal to
$$
E^{\eta}\int_{\mbR^2}\int_{\mbR^2}\sqrt{E^{W}\cE(u_1,t)^2}\sqrt{E^{W}\cE(u_2,t)^2}\nu_k(du_1)\nu_k(du_2).
$$
Since
$$
E\cE(u,t)=1,
$$
then
$$
E\cE(u,t)^2=E\exp\Big\{-2(k-1)\beta(u,t)-\frac{2c(k-1)^2t}{2}\Big\}=
$$
$$
=E\exp\Big\{-2(k-1)\beta(u,t)-\frac{4c(k-1)^2t}{2}+\frac{2c(k-1)^2t}{2}\Big\}=
$$
$$
=e^{\frac{2c(k-1)^2t}{2}}\leq e^{\frac{2c(k-1)^2}{2}}
$$
which completes the proof that $m_k(t),\ t\in[0;1]$ is square integrable martingale. To check the continuity let us apply Kolmogorov's continuity theorem. Note that
\begin{equation}
\label{eq5.22}
E(m_k(t_2)-m_k(t_1))^4=E\Big(\int_{\mbR^2}(\cE(u,t_2)-\cE(u,t_1))\nu_k(du)\Big)^4.
\end{equation}
Applying Jensen's inequality and taking conditional expectation with respect to $\cF_0$  one can conclude that \eqref{eq5.22} less or equal to
$$
E\nu_k(\mbR^2)^3\int_{\mbR^2}(\cE(u,t_2)-\cE(u,t_1))^4\nu_k(du)=
$$
\begin{equation}
\label{eq5.23}
=EE\Big(\nu_k(\mbR^2)^3\int_{\mbR^2}\cE(u,t_1)^4\Big(\frac{\cE(u,t_2)}{\cE(u,t_1)}-1\Big)^4\nu_k(du)|\cF_0\Big).
\end{equation}
Denote by
$$
\cE(u,t_1,t_2)=\frac{\cE(u,t_2)}{\cE(u,t_1)}=
$$
$$
=\exp\Big\{-(k-1)(\beta(u,t_2)-\beta(u,t_1))-\frac{c(k-1)^2(t_2-t_1)}{2}\Big\}.
$$
Note that $\cE(u,t_1,t_2)$ is a stochastic exponential. Then \eqref{eq5.23} equals
$$
E^\eta\nu(\mbR^2)^3\int_{\mbR^2}E^{W}\cE(u,t_1)^4(\cE(u,t_1,t_2)-1)^4\nu_k(du).
$$
Due to Cauchy inequality
$$
E^{W}\cE(u,t_1)^4(\cE(u,t_1,t_2)-1)^4\leq(E^{W}\cE(u,t_1)^8)^{\frac{1}{2}}(E^{W}(\cE(u,t_1,t_2)-1)^8)^{\frac{1}{2}}\leq
$$
$$
\leq c_1(E^W(\cE(u,t_1,t_2)-1)^8)^{\frac{1}{2}}.
$$
Since
$$
\cE(u,t_1,t_2)=1+\int^{t_2}_{t_1}\cE(u,s)d\beta^k(u,s),
$$
then it follows from Burkholder-Davis-Gundy inequality that
$$
E^W(\cE(u,t_1,t_2)-1)^8=E^W\Big(\int^{t_2}_{t_1}\cE(u,s)d\beta^k(u,s)\Big)^8\leq
$$
$$
\leq c_2E^W\Big(\int^{t_2}_{t_1}\cE(u,s)^2ds\Big)^4\leq
$$
$$
\leq c_2(t_2-t_1)^3\int^{t_2}_{t_1}E^W\cE(u,s)^8ds\leq
$$
$$
\leq c_3(t_2-t_1)^4.
$$
Due to Lemma \ref{lem2.7}  $E^\eta(\mbR^2)^3<+\infty.$ Consequently the process $m_k$ satisfies Kolmogorov's continuity theorem.
\end{proof}
Hence $m_k(t),\ t\geq0$ is nonnegative continuous square integrable martingale. It implies that if $m_k$ hits zero, then it equals zero at any moment of time after the moment of hitting zero. Due to the to the time-change theorem \cite{22} (p. 352, Th. 18.4) there exists one-dimensional Brownian motion $B(t),\ t\geq0$ such that
\begin{equation}
\label{eq5.24}
m_k(t)=m_k(0)+B(\langle m_k\rangle(t)),\ t\geq0.
\end{equation}
Denote by
$$
\tau^B_{-b}=\inf\{t\geq0:\ b+B(t)=0\}.
$$
Then $m_k$ admits representation
$$
m_k(t)=b+B(\langle m_k\rangle(t)\wedge\tau^B_{-b}).
$$
The function $b+B(\cdot\wedge\tau^B_{-b})$ is continuous on $[0;+\infty).$
Since there exists the limit of quadratic characteristics $\langle m_k\rangle(\infty),$ then there exists
$$
m_k(\infty)=b+B(\langle m_k\rangle(\infty)\wedge\tau^B_{-b}).
$$
Using the boundness of function $b+B(\cdot\wedge\tau^B_{-b})$ one can conclude that $m_k(\infty)\in[0;+\infty).$
Therefore we proved the following statement
\begin{thm}
\label{thm5.2}
\begin{equation}
\label{eq5.16}
\lim_{t\to+\infty}\exp\Big\{a(k-1)t-\frac{c(k-1)t}{2}-\frac{c(k-1)^2t}{2}\Big\}\cT^{x(\eta,t)}_k\in[0;+\infty),\ a.s.
\end{equation}
\end{thm}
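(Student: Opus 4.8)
The plan is to recognise the renormalised quantity in \eqref{eq5.16} as the martingale isolated in Lemma \ref{lem5.8}. Writing
$$
m_k(t)=\exp\Big\{a(k-1)t-\frac{c(k-1)t}{2}-\frac{c(k-1)^2t}{2}\Big\}\cT^{x(\eta,t)}_k=\int_{\mbR^2}\cE(u,t)\nu_k(du),
$$
the statement \eqref{eq5.16} is precisely the assertion that $m_k(t)$ converges almost surely to a finite nonnegative limit as $t\to+\infty$. By Lemma \ref{lem5.8} the process $m_k$ is a nonnegative continuous square integrable martingale with respect to $\cF_t$. One small point must be checked: Lemma \ref{lem5.8} is phrased on $[0;1]$, while here $t$ runs over $[0;+\infty)$; but the identity $E\cE(u,t)=1$, the bound $E\cE(u,t)^2=e^{c(k-1)^2t}$, the moment bound $E\nu_k(\mbR^2)^m<+\infty$ of Lemma \ref{lem2.7}, and the Burkholder--Davis--Gundy and Kolmogorov estimates used there all remain valid on an arbitrary finite horizon $[0;T]$ with $T$-dependent constants (for the martingale property one may simply quote Novikov's condition, $E\exp\{\tfrac12(k-1)^2cT\}=e^{(k-1)^2cT/2}<\infty$). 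Hence $m_k$ is a nonnegative continuous $L_2$-martingale on the whole half-line.

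First I would record that, $m_k$ being nonnegative and continuous, once it reaches $0$ it stays at $0$. Then, by the time-change theorem (\cite{22}, p.~352, Th.~18.4), there is a one-dimensional Brownian motion $B$ with
$$
m_k(t)=m_k(0)+B\big(\langle m_k\rangle(t)\big),\qquad t\geq 0.
$$
Put $b=m_k(0)=\cT^{\eta}_k\geq 0$ and $\tau^B_{-b}=\inf\{s\geq 0:\ b+B(s)=0\}$; since one-dimensional Brownian motion attains every level, $\tau^B_{-b}<+\infty$ almost surely. Combining the time change with the absorption of $m_k$ at $0$ and the continuity and monotonicity of $\langle m_k\rangle$, one obtains for every $t\geq 0$
$$
m_k(t)=b+B\big(\langle m_k\rangle(t)\wedge\tau^B_{-b}\big).
$$

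To conclude, $\langle m_k\rangle$ is nondecreasing, so the limit $\langle m_k\rangle(\infty):=\lim_{t\to+\infty}\langle m_k\rangle(t)$ exists in $[0;+\infty]$; consequently $\langle m_k\rangle(t)\wedge\tau^B_{-b}$ increases to $\langle m_k\rangle(\infty)\wedge\tau^B_{-b}$, which is finite almost surely because $\tau^B_{-b}$ is. By continuity of $B$,
$$
\lim_{t\to+\infty}m_k(t)=b+B\big(\langle m_k\rangle(\infty)\wedge\tau^B_{-b}\big)=:m_k(\infty)
$$
exists and is finite a.s., and $m_k(\infty)\geq 0$ since every $m_k(t)\geq 0$. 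Reinserting the definition of $m_k$ gives exactly \eqref{eq5.16}.

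The argument is essentially routine once Lemma \ref{lem5.8} is in hand; the only genuinely delicate points are the passage of the $L_2$-martingale property from $[0;1]$ to $[0;+\infty)$, handled by localisation in $t$, and the correct incorporation of the absorption of $m_k$ at $0$ into the time-change formula. An alternative to the whole time-change step would be to invoke Doob's convergence theorem directly for the nonnegative supermartingale $m_k$; the time-change route is preferable because it also exhibits the structure of the limit and shows that $m_k(\infty)$ is strictly positive exactly when $\langle m_k\rangle(\infty)<+\infty$ and $B$ avoids $-b$ on $[0;\langle m_k\rangle(\infty)]$ --- finiteness of $\langle m_k\rangle(\infty)$ being the expected situation, since with $a>0$ the interaction drift $a(x-m_t)$ forces $\|x(u,s)-x(v,s)\|$ eventually outside $supp(\nabla\varphi\ast\nabla\varphi)$, where the integrand defining $\langle m_k\rangle$ vanishes.
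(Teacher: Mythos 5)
Your proposal is correct and follows essentially the same route as the paper: identify $m_k$ as the nonnegative continuous square integrable martingale of Lemma \ref{lem5.8}, note absorption at zero, apply the time-change theorem to write $m_k(t)=b+B(\langle m_k\rangle(t)\wedge\tau^B_{-b})$, and let $t\to+\infty$ using monotonicity of the quadratic characteristic and continuity of $B$. Your added care about extending the martingale property from $[0;1]$ to the whole half-line and about why the stopped time change has a finite limit (via $\tau^B_{-b}<+\infty$ a.s.) only tightens points the paper passes over quickly.
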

Note that $m_k(\infty)=0$ if $\langle m_k\rangle(\infty)=\tau^B_{-b}.$ Moreover if $\langle m_k\rangle(\infty)>\tau^B_{-b},$ then
$$
E\langle m_k\rangle(\infty)>E\tau^B_{-b}=+\infty.
$$
Then to guarantee that the limit in \eqref{eq5.16} does not equal to zero with positive probability one can try to check that
\begin{equation}
\label{eq5.25}
E\langle m_k\rangle(+\infty)<+\infty.
\end{equation}
Relation \eqref{eq5.25} will be examined in terms of asymptotics of
$E\langle m_k\rangle(t)$ as $t\to+\infty.$ To do this we need
\begin{lem}
\label{lem5.9}
There exist constants $c_1,c_2>0$ such that for all $u_1,u_2\in\mbR^2$ and $t\geq0$ which satisfy the relation
$$
e^{-at}<\|u_2-u_1\|<1
$$
the following estimate holds
$$
P\{\|x(u_2,t)-x(u_1,t)\|\leq1\}\leq\Big(c_1\ln\frac{1}{\|u_2-u_1\|}+c_2\Big)\frac{1}{at}.
$$
\end{lem}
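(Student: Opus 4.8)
The plan is to reduce the problem to a one–dimensional hitting‑time estimate for the radial part of the difference of the two trajectories. First I would introduce $z(t)=x(u_2,t)-x(u_1,t)$. In \eqref{eq5.11} the drift of $x(u,\cdot)$ is $a\bigl(x(u,t)-m_t\bigr)$ with the center of mass $m_t=\int_{\mbR^2}x(v,t)\mu_0(dv)$ the same for every particle, so it cancels in the difference and
$$
dz(t)=a\,z(t)\,dt+dN(t),\qquad dN(t)=\int_{\mbR^2}\bigl(\varphi(x(u_2,t)-p)-\varphi(x(u_1,t)-p)\bigr)W(dp,dt).
$$
Solving the linear equation gives $z(t)=e^{at}y(t)$, where $y(t)=(u_2-u_1)+M(t)$ and $M(t)=\int_0^t e^{-as}\,dN(s)$ is a continuous $\mbR^2$‑valued martingale. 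Writing $g(r)=\int_{\mbR^2}(\varphi(q)-\varphi(q-r))^2dq$ and using $W_1\perp W_2$ together with the change of variable $q=x(u_i,s)-p$, one checks that $\langle y_1\rangle(t)=\langle y_2\rangle(t)=\int_0^t e^{-2as}g(z(s))\,ds$ and $\langle y_1,y_2\rangle\equiv0$, so $y$ is a \emph{conformal} martingale; moreover $g(r)=2-2\int\varphi(q)\varphi(q-r)\,dq\le2$ since $\int\varphi^2=1$. Consequently $E\|y(t)\|^2\le\|u_2-u_1\|^2+2/a$, so $y$ is $L^2$‑bounded and $y(t)\to y_\infty$ a.s.\ and in $L^2$. (Note that the bracket of $y$ depends on the trajectories only through $z$, so $\eta$ and $\mu_0$ do not really enter.)

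The event in question is $\{\|z(t)\|\le1\}=\{\|y(t)\|\le e^{-at}\}$. Since $\|y(0)\|=\|u_2-u_1\|>e^{-at}$ by hypothesis and $y$ is continuous, on this event the sphere‑hitting time $\sigma:=\inf\{s\ge0:\|y(s)\|=e^{-at}\}$ is at most $t$, hence finite; so it suffices to bound $P\{\sigma<\infty\}$.

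The key observation is that in dimension $2$ the radial part of a conformal martingale is again a (local) martingale: $\ln\|y\|$ is harmonic off the origin, so It\^o's formula has no drift term on $\{y\neq0\}$. Stopping at $\sigma$, on $[0,\sigma]$ we have $\|y\|\ge e^{-at}$, so $\ln\|y(s\wedge\sigma)\|\ge-at$ is bounded below, and $\ln^+\|y(s\wedge\sigma)\|\le\|y(s\wedge\sigma)\|$ is controlled in $L^2$ (Doob's maximal inequality together with $\sup_sE\|y(s\wedge\sigma)\|^2<\infty$); hence $\bigl(\ln\|y(s\wedge\sigma)\|\bigr)_{s\ge0}$ is a uniformly integrable genuine martingale, converging a.s.\ and in $L^1$ to $(-at)\1_{\{\sigma<\infty\}}+\ln\|y_\infty\|\1_{\{\sigma=\infty\}}$. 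Optional stopping at $s=\infty$ therefore gives
$$
\ln\|u_2-u_1\|=-at\,P\{\sigma<\infty\}+E\bigl[\ln\|y_\infty\|\,\1_{\{\sigma=\infty\}}\bigr]\le-at\,P\{\sigma<\infty\}+E\|y_\infty\|,
$$
and, since $E\|y_\infty\|\le(E\|y_\infty\|^2)^{1/2}\le(1+2/a)^{1/2}$ (here $\|u_2-u_1\|<1$ is used), we obtain
$$
at\,P\{\sigma<\infty\}\le\ln\frac{1}{\|u_2-u_1\|}+(1+2/a)^{1/2},
$$
which is the assertion with $c_1=1$ and $c_2=(1+2/a)^{1/2}$.

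I expect the main obstacle to be the last step — making the argument at time infinity rigorous. The function $\ln\|y(\cdot\wedge\sigma)\|$ is only \emph{a priori} a local martingale bounded below, which gives for free only the supermartingale inequality $E[\ln\|y(t\wedge\sigma)\|]\le\ln\|u_2-u_1\|$, and that runs in the \emph{wrong} direction for an upper bound on $P\{\sigma<\infty\}$. One genuinely needs the \emph{equality} in optional stopping, i.e.\ that this process is a true uniformly integrable martingale; this is exactly where the $L^2$‑bound provided by $g\le2$ (that is, by $\int\varphi^2=1$) and the a.s./$L^2$ convergence of the $L^2$‑bounded martingale $y$ are used. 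The remaining computations (the conformality of $y$, the $L^2$‑estimate, and the continuity/IVT reduction to $\sigma$) are routine.
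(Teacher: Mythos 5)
Your proof is correct, and its skeleton coincides with the paper's: both pass to the difference $y(t)=x(u_2,t)-x(u_1,t)$, observe that the interaction drift reduces to $ay(t)\,dt$, discount by $e^{-at}$ to obtain an isotropic $L^2$-bounded martingale whose bracket is controlled by $2t-2\varphi\ast\varphi(\cdot)\le 2t$ and hence by $1/a$ after discounting, and reduce the event $\{\|x(u_2,t)-x(u_1,t)\|\le 1\}$ to the discounted martingale entering a disc of radius $e^{-at}$. Where you genuinely diverge is the last step. The paper applies the Dambis--Dubins--Schwarz time change to represent the discounted difference as a planar Brownian motion run up to time at most $1/a$, and then invokes the Dvoretzky--Erd\H{o}s--Kakutani small-disc hitting estimate (Lemma~\ref{lem5.10}, imported from \cite{30}) after a Brownian rescaling. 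You instead prove the hitting estimate from scratch: $\ln\|y(\cdot\wedge\sigma)\|$ is a local martingale because $\ln\|\cdot\|$ is harmonic off the origin and $y$ is conformal, it is bounded below by $-at$ up to $\sigma$ and dominated above by $\sup_s\|y(s\wedge\sigma)\|\in L^2$, hence a uniformly integrable true martingale, and optional sampling at infinity yields exactly the $\big(\ln\frac{1}{\|u_2-u_1\|}+c_2\big)\frac{1}{at}$ bound. Your route is self-contained (no time change, no external lemma) and correctly identifies the one delicate point, namely that the supermartingale inequality alone goes the wrong way and genuine uniform integrability is needed. The paper's route outsources that work to \cite{30} but requires the extra scaling bookkeeping visible in \eqref{eq5.27}. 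One shared blemish: your inequality $g\le 2$ (like the paper's $\langle\tilde\beta\rangle(t)\le 2t$) tacitly assumes $\varphi\ast\varphi\ge 0$, which does not follow from the stated hypotheses on $\varphi$; Cauchy--Schwarz only gives $g\le 4$. This changes nothing but the value of $c_2$.
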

\begin{proof}
It follow from \eqref{eq5.10} that for any $u_1,u_2\in\mbR^2$
$$
d(x(u_2,t)-x(u_1,t))=a(x(u_2,t)-x(u_1,t))dt+d\tilde{\beta}(t),
$$
where
$$
\tilde{\beta}(t)=\int^t_0\int_{\mbR^2}(\varphi(x(u_2,s)-p)-\varphi(x(u_2,s)-p))W(dp,ds),\ t\geq 0
$$
is a martingale with respect to $\cF_t.$ Put $y(t)=x(u_2,t)-x(u_2,t).$ Then one obtain the following Cauchy problem
\begin{equation}
\label{eq5.26}
\begin{cases}
dy(t)=ay(t)dt+d\tilde{\beta}(t)\\
y(0)=u_2-u_1.
\end{cases}
\end{equation}
It is not difficult to check that solution to \eqref{eq5.26} has the representation
$$
y(t)=(u_2-u_1)e^{at}+\int^t_0e^{a(t-s)}d\tilde{\beta}(s).
$$
Note that $y(t)e^{-at},\ t\geq0$ is square integrable martingale with quadratic  characteristic of coordinates
$$
\langle y_ie^{-a\cdot}\rangle(t)=\int^t_0e^{-2as}d\langle\tilde{\beta}\rangle(s),\ i=1,2
$$
and joint characteristic
$$
\langle y_1,y_2\rangle(t)=0.
$$
Here
$$
\langle\tilde{\beta}\rangle(t)=\int^t_0\int_{\mbR^2}((\varphi(x(u_2,s)-p)-\varphi(x(u_2,s)-p))^2dpds.
$$
Consequently $y(t)e^{-at},\ t\geq0$ is an isotropic martingale and by time-change theorem \cite{22}(p. 352, Th. 18.4) there exists a planar Brownian motion $B$ such that
$$
y(t)e^{at}=u_2-u_1+B(\langle y_1e^{-a\cdot}\rangle(t)).
$$
Then
$$
P\{y(t)\in B(0;1)\}=
$$
$$
=P\{y(t)e^{-at}\in B(0;e^{-at})\}=
$$
$$
=P\{B(\langle y_1e^{a\cdot}\rangle(t))\in B(u_2-u_1,e^{-at})\}\leq
$$
$$
\leq P\{\min_{[0;\langle y_1e^{a\cdot}\rangle(t)]}\|u_2-u_1-B(s)\|\leq e^{-at}\}.
$$
To finish the proof we need
\begin{lem}
\label{lem5.10}\cite{30}
Let $B(t),\ t\geq0$ be planar Brownian motion. For $0<r<b<1$  there exists positive constants $c_1$ such that for
$$
P\Big(\min_{[0;1]}\|(b,0)+B(t)\|\leq r\Big)<\Big[c_1+\ln\frac{1}{b}\Big]\ \Big(\ln\frac{1}{r}\Big)^{-1}.
$$
\end{lem}
Since distribution of planar Brownian is rotation-invariant, then
$$
P\{\min_{[0;\langle y_1e^{a\cdot}\rangle(t)]}\|u_2-u_1-B(s)\|\leq e^{-at}\}=
$$
$$
=P\{\min_{[0;\langle y_1e^{a\cdot}\rangle(t)]}\|(\|u_2-u_1\|,0)+B(s)\|\leq e^{-at}\}.
$$
Note that
$$
\langle\tilde{\beta}\rangle(t)=\int^t_0\int_{\mbR^2}(\varphi(x(u_2,s)-p)-\varphi(x(u_2,s)-p))^2dpds=
$$
$$
=2t-2\int^t_0\varphi\ast\varphi(x(u_2,s)-x(u_1,s))ds\leq 2t.
$$
Hence
$$
\langle y_1 e^{-a\cdot}\rangle(t)\leq 2\int^{\infty}_0e^{-2as}ds=\frac{1}{a}.
$$
Consequently
$$
=P\{\min_{[0;\langle y_1e^{a\cdot}\rangle(t)]}\|(\|u_2-u_1\|,0)+B(s)\|\leq e^{-at}\}\leq
$$
$$
\leq P\{\min_{[0;\frac{1}{a}]}\|(\|u_2-u_1\|,0)+B(s)\|\leq e^{-at}\}=
$$
\begin{equation}
\label{eq5.27}
P\{\min_{[0;1]}\|(\sqrt{a}\|u_2-u_1\|,0)+B(s)\|\leq \sqrt{a}e^{-at}\}.
\end{equation}
It follows from Lemma \ref{lem5.10} that \eqref{eq5.27} less or equal to
$$
\Big[c_1+\ln_{+}\frac{1}{\sqrt{a}\|u_2-u_1\|}\Big]\frac{1}{\ln\frac{1}{\sqrt{a}}+at}
$$
which completes the proof of lemma.
\end{proof}
Lemma \ref{lem5.9} allows to obtain the following estimate for expectation of quadratic characteristic of the martingale $m_k(t),\ t\geq0.$
\begin{thm}
\label{thm5.2} There exist positive constants $k_1,k_2$ such that for any $t\geq0$
$$
E\langle m_k\rangle(t)\leq e^{3(k-1)^2ct}\ \frac{t\ (k_1+\frac{k_2}{\sqrt{a}})^{\frac{1}{2}}}{(\sqrt{a}+\ln 2+at)^{\frac{1}{2}}+(\sqrt{a}+\ln 2)^{\frac{1}{2}}}.
$$
\end{thm}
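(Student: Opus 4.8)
The plan is to start from the formula for the quadratic characteristic $\langle m_k\rangle$ obtained in Lemma \ref{lem5.8}, take expectations, and split the resulting double integral against $\nu_k\otimes\nu_k$, by the Cauchy--Schwarz inequality, into (i) the exponential moments of the stochastic exponentials $\cE(u,\cdot)$, which can be computed explicitly, and (ii) the probability that two particles of the flow stay within the interaction range $B(0;2)$, which is controlled by a radius-$2$ version of the collision estimate of Lemma \ref{lem5.9}; the ensuing time integration then produces the denominator in the statement. Concretely, since $\nabla\varphi\ast\nabla\varphi$ is a continuous positive-definite function supported in $\overline{B(0;2)}$ with $\|\nabla\varphi\ast\nabla\varphi\|_\infty=\nabla\varphi\ast\nabla\varphi(0)=c$, taking the expectation in Lemma \ref{lem5.8} and conditioning on $\cF_0$ (recall $\nu_k$ is $\cF_0$-measurable by Lemma \ref{lem2.6}, and that given $\cF_0$ the field $\eta$ is frozen while the flow is still driven by $W$) gives
$$
E\langle m_k\rangle(t)\le c(k-1)^2\int_0^t E^\eta\Big[\int_{\mbR^2}\int_{\mbR^2}E^W\big[\cE(u,s)\cE(v,s)\1_{\{\|x(u,s)-x(v,s)\|\le 2\}}\big]\nu_k(du)\nu_k(dv)\Big]ds.
$$

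Next I would bound the inner $W$-expectation by Cauchy--Schwarz applied twice,
$$
E^W\big[\cE(u,s)\cE(v,s)\1_{\{\|x(u,s)-x(v,s)\|\le 2\}}\big]\le\big(E^W\cE(u,s)^4\big)^{1/4}\big(E^W\cE(v,s)^4\big)^{1/4}\big(P^W\{\|x(u,s)-x(v,s)\|\le 2\}\big)^{1/2}.
$$
As in the proof of Lemma \ref{lem5.8}, conditionally on $\cF_0$ the process $\cE(u,\cdot)$ is the stochastic exponential of a continuous martingale with \emph{deterministic} quadratic characteristic $(k-1)^2cs$, so that $E^W\cE(u,s)^4=e^{6(k-1)^2cs}$ and the first two factors multiply to $e^{3(k-1)^2cs}\le e^{3(k-1)^2ct}$, which is the exponential prefactor in the statement. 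For the collision probability I would repeat the computation of Lemma \ref{lem5.9} with the ball $B(0;2)$ in place of $B(0;1)$ — time-changing $y(t)=x(u,t)-x(v,t)$ after removing the exponential drift, using the bound $\langle y_1 e^{-a\cdot}\rangle(t)\le\frac1a$ and Lemma \ref{lem5.10} — to obtain, for every $s\in[0;t]$ and all $u,v\in\mbR^2$,
$$
P^W\{\|x(u,s)-x(v,s)\|\le 2\}\le\frac{c_1+\ln_+\frac{c_2}{\|u-v\|}}{\sqrt a+\ln 2+as},
$$
with $c_2$ of order $a^{-1/2}$; this bound is nontrivial only when its right-hand side is $<1$.

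Substituting, taking square roots, extracting the $s$-independent factor from the time integral and using
$$
\int_0^t\frac{ds}{(\sqrt a+\ln2+as)^{1/2}}=\frac2a\Big[(\sqrt a+\ln2+at)^{1/2}-(\sqrt a+\ln2)^{1/2}\Big]=\frac{2t}{(\sqrt a+\ln2+at)^{1/2}+(\sqrt a+\ln2)^{1/2}},
$$
I arrive at
$$
E\langle m_k\rangle(t)\le 2c(k-1)^2e^{3(k-1)^2ct}\,\frac{t}{(\sqrt a+\ln2+at)^{1/2}+(\sqrt a+\ln2)^{1/2}}\,E^\eta\!\int_{\mbR^2}\!\int_{\mbR^2}\!\Big(c_1+\ln_+\tfrac{c_2}{\|u-v\|}\Big)^{1/2}\nu_k(du)\nu_k(dv).
$$
It remains to check that the last expectation is finite and has the form $(k_1'+k_2'/\sqrt a)^{1/2}$: by Cauchy--Schwarz in $\nu_k\otimes\nu_k$ and then in $E^\eta$ it is at most $\big(E^\eta\nu_k(\mbR^2)^2\big)^{1/2}\big(E^\eta\int\int(c_1+\ln_+\frac{c_2}{\|u-v\|})\nu_k(du)\nu_k(dv)\big)^{1/2}$, and since $\ln_+\frac{c_2}{\|u-v\|}\le\ln_+c_2+\ln_+\frac1{\|u-v\|}\le c_2+\ln_+\frac1{\|u-v\|}$, Lemma \ref{lem2.7} ($E^\eta\nu_k(\mbR^2)^2<\infty$) and Lemma \ref{lem2.8} ($E^\eta\int\int\ln_+\frac1{\|u-v\|}\nu_k(du)\nu_k(dv)<\infty$) bound the quantity under the square root by $k_1'+k_2'/\sqrt a$. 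Absorbing $2c(k-1)^2$, the numerical constants and $(E^\eta\nu_k(\mbR^2)^2)^{1/2}$ into $k_1,k_2$ gives exactly the asserted inequality.

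The main obstacle is the radius-$2$ collision estimate: one must make the bound uniform in $u,v$, covering in particular the short-range regime $\|u-v\|\le e^{-as}$ that lies outside the hypotheses of Lemma \ref{lem5.9} (there the estimate survives only thanks to the $\ln_+$ together with the logarithmic integrability furnished by Lemma \ref{lem2.8}), and one must keep careful track of the $a$-dependent constants so that the time integration reproduces precisely the denominator $(\sqrt a+\ln2+at)^{1/2}+(\sqrt a+\ln2)^{1/2}$. The remaining steps are routine applications of Cauchy--Schwarz and of the moment bounds in Lemmas \ref{lem2.7}, \ref{lem2.8} and \ref{lem5.8}.
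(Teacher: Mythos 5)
Your proposal follows essentially the same route as the paper's proof: bounding $\nabla\varphi\ast\nabla\varphi$ by a constant times the indicator of its support $B(0;2)$, conditioning on $\cF_0$, applying Cauchy--Schwarz to separate the fourth moments $E^W\cE(u,s)^4=e^{6(k-1)^2cs}$ from the collision probability, invoking Lemma \ref{lem5.9} for the latter, performing the same time integration to produce the denominator, and finishing with H\"older together with Lemmas \ref{lem2.7} and \ref{lem2.8}. The one point you flag --- uniformity of the collision estimate in the short-range regime $\|u-v\|\le e^{-as}$, which lies outside the stated hypotheses of Lemma \ref{lem5.9} --- is indeed glossed over in the paper as well, but is harmless since for such $u,v$ the right-hand side of the estimate already exceeds $1$ once the additive constant is taken at least $\ln 2$.
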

\begin{proof}
Since
$$
E\langle m_k\rangle(t)=(k-1)^2E\int_{\mbR^2}\int_{\mbR^2}\int^t_0\cE(u,s)\cE(v,s)\cdot
$$
$$
\cdot\nabla \varphi^1\ast\nabla\varphi^1(x(u,s)-x(v,s))ds\ \nu_k(du)\nu_k(dv)+
$$
$$
+(k-1)^2E\int_{\mbR^2}\int_{\mbR^2}\int^t_0\cE(u,s)\cE(v,s)\cdot
$$
\begin{equation}
\label{eq5.28}
\cdot\nabla \varphi^2\ast\nabla\varphi^2(x(u,s)-x(v,s))ds\ \nu_k(du)\nu_k(dv),
\end{equation}
then it suffices to estimate one summand in \eqref{eq5.28}. Note
$$
E\int_{\mbR^2}\int_{\mbR^2}\int^t_0\cE(u,s)\cE(v,s)\cdot
$$
$$
\cdot\nabla \varphi^1\ast\nabla\varphi^1(x(u,s)-x(v,s))ds\ \nu_k(du)\nu_k(dv)=
$$
$$
=E\int_{\mbR^2}\int_{\mbR^2}\int^t_0\cE(u,s)\cE(v,s)\cdot
$$
$$
\cdot\nabla \varphi^1\ast\nabla\varphi^1(x(u,s)-x(v,s))1_{\{\|x(u,s)-x(v,s)\|\leq 2\}}ds\ \nu_k(du)\nu_k(dv)\leq
$$
$$
\leq c_1\ E\ \Big(\int_{\mbR^2}\int_{\mbR^2}\int^t_0\cE(u,s)\cE(v,s)\cdot
$$
$$
\cdot1_{\{\|x(u,s)-x(v,s)\|\leq 2\}}ds\ \nu_k(du)\nu_k(dv)|\cF_0\Big)=
$$
$$
= c_1\ E^\eta\int_{\mbR^2}\int_{\mbR^2}\int^t_0E^W\cE(u,s)\cE(v,s)\cdot
$$
\begin{equation}
\label{eq5.29}
\cdot1_{\{\|x(u,s)-x(v,s)\|\leq 2\}}ds\ \nu_k(du)\nu_k(dv),\ c_1>0.
\end{equation}
It follows from Cauchy inequality that
$$
E^W\cE(u,s)\cE(v,s)1_{\{\|x(u,s)-x(v,s)\|\leq 2\}}\leq
$$
$$
\leq \Big(E^W\cE(u,s)^4 E^W\cE(u,s)^4\Big)^{\frac{1}{4}}\Big(P^W\Big\{\|x(u,s)-x(v,s)\|\leq 2\Big\}\Big)^{\frac{1}{2}}.
$$
It can be checked that
$$
E^W\cE(u,s)^4=e^{6(k-1)^2 ct}.
$$
Then
$$
\Big(E^W\cE(u,s)^4 E^W\cE(u,s)^4\Big)^{\frac{1}{4}}=e^{3(k-1)^2 ct}.
$$
Applying Lemma \ref{lem5.9} one can conclude
$$
P^W\{\|x(u,s)-x(v,s)\|\leq 2\}\leq
$$
$$
\leq\Big[c_2+\ln_{+}\frac{1}{\sqrt{a}\|u-v\|}\Big]\frac{1}{\ln\frac{1}{\sqrt{a}}+\ln 2+as}.
$$
Hence \eqref{eq5.29} less or equal to
$$
c_1\ E^\eta\int_{\mbR^2}\int_{\mbR^2}\int^t_0 e^{3(k-1)^2cs}\cdot
$$
\begin{equation}
\label{eq5.30}
\cdot
\Big(\Big[c_2+\ln_{+}\frac{1}{\sqrt{a}\|u-v\|}\Big]\frac{1}{\ln\frac{1}{\sqrt{a}}+\ln 2+as}\Big)^{\frac{1}{2}} ds\nu_k(du)\nu_k(dv).
\end{equation}
Note that
$$
\int^t_0e^{3(k-1)^2cs}\frac{1}{\sqrt{\ln\frac{1}{\sqrt{a}}+\ln 2+as}}ds\leq
$$
$$
\leq e^{3(k-1)^2ct}\ \frac{2}{a}\Big(\sqrt{\sqrt{a}+\ln 2+at}-\sqrt{\sqrt{a}+\ln 2}\Big).
$$
Consequently \eqref{eq5.30} less or equal to
$$
c_1\Big(e^{3(k-1)^2ct}\ \frac{2}{a}\Big(\sqrt{\sqrt{a}+\ln 2+at}-\sqrt{\sqrt{a}+\ln 2}\Big)\Big)\cdot
$$
\begin{equation}
\label{eq5.31}
\cdot E^\eta\int_{\mbR^2}\int_{\mbR^2}
\Big(c_2+\ln_{+}\frac{1}{\sqrt{a}\|u-v\|}\Big)^{\frac{1}{2}}\nu_k(du)\nu_k(dv)
\end{equation}
Applying H$\ddot{o}$lder inequality one can obtain that \eqref{eq5.31} less or equal to
$$
\Big(E\nu_k(\mbR^2)^2\Big)^\frac{1}{2}\cdot
$$
\begin{equation}
\label{eq5.33}
\cdot \Big(\Big(c_2+\frac{1}{\sqrt{a}}\Big)E\nu_k(\mbR^2)^2+E\int_{\mbR^2}\int_{\mbR^2}\ln_+\frac{1}{\|u-v\|}\nu_k(du)\nu_k(dv)\Big)^{\frac{1}{2}}.
\end{equation}
It was proved in Lemma \ref{lem2.8} that
$$
E\int_{\mbR^2}\int_{\mbR^2}\ln_+\frac{1}{\|u-v\|}\nu_k(du)\nu_k(dv)<+\infty.
$$
Set
$$
c_3=\Big(E\nu_k(\mbR^2)^2\Big)^\frac{1}{2}.
$$
Then \eqref{eq5.33} less or equal to
$$
c_3\Big((c_2+\frac{1}{\sqrt{a}})c^2_3+c_4\Big)^\frac{1}{2},\ c_4>0.
$$
\end{proof}
\section*{Acknowledgments}
The third named author Olga Izyumtseva thanks Sasha Sodin for useful discussions. Olga Izyumtseva was supported by the European Research Council starting grant 639305 (SPECTRUM).


\begin{thebibliography}{99}
\bibitem{16}
A.A. Dorogovtsev Measure-valued processes and stochastic flows // Proceedings of Institute of Mathematics of NAS of Ukraine. Mathematics and its Applications 66, NASU, Institute of Mathematics, Kiev, 2007, 290 p.
\bibitem{9}
A.A. Dorogovtsev, Srochastic flows with interaction and measure-valued processes, International Journal of Mathematics and Mathematical Sciences 63 (2003), 3963-3977.
\bibitem{31}
Le Gall J.-F., Wiener sausage and self-intersection local times, Journal of Functional Analysis 88 (1990), 299-341.
\bibitem{10}
T.Funaki, Random motion of strings and related stochastic evolution equations, Nagoya Math. J. 89 (1983), 129-193.
\bibitem{15}
Da Parato, J. Zabchyk, Stochastic equations in infinite dimensions, Encyclopedia of Mathematics and its Application 45, Cambridge Univ. Press, Cambridge, 1992, 454 p.
\bibitem{11}
M.Kardar, G.Parisi, Y.-C.Zhang, \textit{Dynamic Scaling of Growing Interfaces}, Physical Reviev Letters 56 (1986), no. 9, 889-892.
\bibitem{13}
V.I. Arnold, B.A. Khesin, Topological Methods in Hydrodynamics, Springer-Verlag, New York, 1998, 392 p.
\bibitem{12}
M. Cranston, Y. LeJan, Geometric evolution under isotropic stochastic flow, Electron J. Probab. 3, paper 4 (1998), 1-36.
\bibitem{14}
C.L. Zirbel, E. Cinlar, Dispersion of particle systems in Brownian flows, Adv. Appl. Probab. 28 (1996), 53-74.
\bibitem{1}
S. Varadhan, Appendix to: Euclidian quantum field theory, by K. Symanzik,R. Jost, New York, 1969.
\bibitem{2}
J. Rosen, A renormalized local time for multiple intersections of planar Brownian motion, Sem.de Prob. XX, 20 (1986), 515-531.
\bibitem{3}
E.B.Dynkin, Regularized self-intersection local times of planar Brownian motion, The Annals of Probability, 16 (1988), no. 1, 58-74.
\bibitem{27}
A.A. Dorogovtsev, O.L. Izyumtseva, Hilbert-valued self-intersection local times for planar Brownian motion, Stochastics,
91 (2019), no.1, 143-154.
\bibitem{25}
A.A. Dorogovtsev, O.L. Izyumtseva, On regularization of the formal Fourier-Wiener transform of the self-intesection local time of a planar Gaussian process, Theory of Stochastic Processes, 17 (2011), no. 1, 28-38.
\bibitem{26}
A.A. Dorogovtsev, O.L. Izyumtseva, Local time of self-intersection, Ukrainian Mathematical Journal, 68 (2016), no. 3, 325-379.
\bibitem{7}
J. Cuzick, J.P. DuPreez, Joint continuity of Gaussian local times, The Annals of Probability, 10 (1982), no. 3, 810-817.

\bibitem{5}
A.V. Rudenko, Local time as an element of the Sobolev space, Theory Stoch. Proc., 13, 3 (2007), no. 29, 65-79.
\bibitem{18}
H. Kunita, Stochastic flows and stochastic differential equations, Cambridge University Press, 1997.
\bibitem{22}
O. Kallenberg, Foundation of Modern Probability, Springer, 1997.
\bibitem{21}
P. Kotelenez, Stochastic ordinary and stochastic partial differential equations, Springer, 2008.
\bibitem{23}
R. Liptser, A.N. Shiryayev, Theory of Martingales, Mathematics and its Application, 1989.

\bibitem{30}
Dvoretzky A., Erdos P., Kakutani S., Multiple points of paths of Brownian mnotion in the plane, Bulletin of the research counsil of Israel (1954), 364-371.
\end{thebibliography}
\end{document}